\documentclass[12pt,a4paper,leqno]{amsart}

\usepackage{amsmath,eucal}
\usepackage{amssymb}

\newtheorem{theorem}{Theorem}[section]

\newtheorem{lemma}[theorem]{Lemma}
\newtheorem{proposition}[theorem]{Proposition}
\newtheorem{cor}[theorem]{Corollary}

\theoremstyle{definition}

\newtheorem{rmk}[theorem]{Remark}
\newtheorem*{Ackn}{Acknowledgment}

\setlength{\textheight}{23cm}
\setlength{\textwidth}{14.5cm}
\setlength{\oddsidemargin}{0.5cm}
\setlength{\evensidemargin}{0.5cm}
\setlength{\topmargin}{0cm}

\newcommand{\la}{\langle}
\newcommand{\ra}{\rangle}

\newcommand{\dbla}{\langle\!\langle}
\newcommand{\dbra}{\rangle\!\rangle}

\newcommand{\F}{\mathcal F}

\newcommand{\tr}{{\rm tr\,}}
\newcommand{\bA}{{\bf A}}
\newcommand{\bi}{{\bf i}}
\newcommand{\End}{{\rm End\,}}

\title[Generalized Newton transformation and its applications]{Generalized Newton transformation and its applications to extrinsic geometry}

\author[K. Andrzejewski, W. Koz\l owski and K. Niedzia\l omski]{Krzysztof Andrzejewski, Wojciech Koz\l owski\\and Kamil Niedzia\l omski}

\begin{document}
\baselineskip=17pt

\begin{abstract}
In this article we introduce a generalization of the Newton transformation to the case of a system of endomorphisms. We show that it can be used in the context of extrinsic geometry of foliations and distributions yielding new integral formulas containing generalized extrinsic curvatures.
\end{abstract}

\keywords{Newton transformation, foliation, integral formulas, shape operator}
\subjclass[2000]{53C12; 53C65}

\thanks{The authors were supported by the Polish NSC grant No 6065/B/H03/2011/40.} 

\address{
Department of Theoretical Physics and Computer Science \endgraf
University of \L\'{o}d\'{z} \endgraf 
ul. Pomorska 149/153, 90-236 \L\'{o}d\'{z} \endgraf
Poland
}
\email{k-andrzejewski@uni.lodz.pl}

\address{
Department of Mathematics and Computer Science \endgraf
University of \L\'{o}d\'{z} \endgraf
ul. Banacha 22, 90-238 \L\'{o}d\'{z} \endgraf
Poland
}
\email{wojciech@math.uni.lodz.pl}
\email{kamiln@math.uni.lodz.pl}

\maketitle

\section{Introduction} 
Analyzing the study of Riemannian geometry we see that its basic concepts are related with some operators, such as shape, Ricci, Schouten operator, etc. and functions constructed of them, such as mean curvature, scalar curvature, Gauss-Kronecker curvature, etc. The most natural and useful functions are the ones derived from algebraic invariants of these operators e.g. by taking trace, determinant and in general the $r$-th symmetric functions $\sigma_r$. However, the case $r>1$ is strongly nonlinear and therefore more complicated. The powerful tool to deal with this problem is the Newton transformation $T_r$ of an endomorphism $A$ (strictly related with the  Newton's identities) which, in a sense, enables a linearization of $\sigma_r$,
\begin{equation*}
(r+1)\sigma_{r+1}=\tr(AT_r).
\end{equation*}
Although this operator appeared in geometry many years ago (see, e.g., \cite{Rei,Voss}), there is a continues increase of applications of this operator in different areas of geometry in the last years (see, among others, \cite{ AC, AM, ALM,BC,BS,CR,V, Ros,Rov, RW3,Via}).

All these results cause a natural question, what happen if we have a family of operators i.e. how to define the Newton transformation for a family of endomorphisms. A partial answer to this question can be found in the literature (operator $T_r$ and the scalar $S_r$ for even $r$ \cite{AW2,CL}), nevertheless, we expect that this case is much more subtle. This is because in the case of family of operators we should obtain more natural functions as in the case of one and consequently more information about geometry. In order to do this, for any multi--index $u$ and generalized elementary symmetric polynomial $\sigma_u$ we introduce transformations depending on a system of linear endomorphisms. Since these transformations have properties analogous to the Newton transformation (and in the  case of one endomorphism  coincides with it) we call this new object {\it generalized Newton transformation} (GNT) and denote by $T_u$. The concepts of GNT is based on the variational formula for the $r$--th symmetric function
\begin{equation*}
\frac{d}{d\tau}\sigma_{r+1}(\tau)=\tr\left(T_r\cdot \frac{d}{d\tau}A(\tau)\right),
\end{equation*}
which is crucial in many applications and, as we will show,  characterize Newton's transformations. Surprisingly enough, according to knowledge of the authors, GNT has been never investigated before.

The precise definition of GNT and its main properties are given in Sections 2 and 3. These sections seem to be of independent interest since they do not relate to geometric picture. We show some algebraic relations between the trace of GNT and algebraic invariants $\sigma_u$ (Proposition \ref{Prop-GNT}). As a corollary we obtain  generalizations of Cayley--Hamilton theorem (Theorem \ref{Thm-H-C}) for a system of linear endomorphisms. Moreover, we show that the operators $T_r$ and $S_r$ for even $r$, which appeared in the literature, can be build of our operators $T_u$ and $\sigma_u$ (Theorem \ref{thm:relTrTu})

Next, we consider GNT in the context of geometry of foliations (and distributions in general), however we think that GNT has fine algebraic properties, which enable further applications. To begin with, let us note that one of the interesting developments in  geometry of foliations during the last decades was the rise of integral formulas for closed foliated manifolds. These formulas are of some interest, for example in several geometric situations they provide obstructions to the existence of foliations with all the leaves enjoying a given geometric property (see, {\cite{AW1,AW3, BKO,RW3,T} and bibliographies therein). Such formulas have also applications in different areas of differential geometry and analysis on manifolds (see, for example, \cite{BW, BCN, BrSl,Sve}).

The most classical integral formula, in fact the first one known, is due to Reeb \cite{Ree}. He proved that for codimension--one foliation of closed Riemannian manifold $M$ one has
\begin{equation*}
\int_M H =0,
\end{equation*}
where $H$ is the  mean curvature of the leaves. In the early 80's there was obtained a notable result by Brito, Langevin and Rosenberg \cite{BLR}. The authors considered codimension--one foliations of a closed space form $M^{p+1}(\kappa)$.
They showed that the integral of $r$-th basic symmetric function of the shape operator of a foliation $\F$ (i.e. $r$-th mean curvature) depends only on  geometry of $M$ not $\F$. More precisely, they proved that
\begin{equation}\label{Eq}
 \int_MS_r=\left\{ \begin{array}{rl}
\kappa^{\frac{r}{2}}\dbinom{\frac{p}{2}}{\frac{r}{2}}{\rm vol}(M)& \textrm{for $p$, $r$ even},\\ 0 & \textrm{for $p$ or $r$ odd}. 
\end{array} \right.
\end{equation}

A generalization of the above result to the case of arbitrary closed manifold has been recently obtained in \cite{AW1,AW2}. The authors applied the $r$--th Newton transformation $T_r$ of the shape operator of the foliation $\mathcal F$ ($r=0,1,\ldots, p=\textrm{dim}\F$). Computing the divergence of the vector field 
\begin{equation}\label{eq:Ycodim1}
T_r \left(\nabla_N N\right) + S_{r+1} N,
\end{equation}
where $N$ denotes the unit vector field orthogonal to $\mathcal F$, and using the Stokes theorem they obtained system of integral formulas which in the special case of a closed space form reduce to \eqref{Eq}.  

Although, all of the mentioned approaches possess a generalization to the case of arbitrary codimension, that is integral formulas containing higher order mean curvatures $S_r$, for $r$ even, we believe that in codimension grater than one we should have more extrinsic curvatures and, globally defined, (normal) vector fields which can give an additional information about geometry of foliations and distributions.

Since, the bundle $P = O(D^\perp)$ or $P = SO(D^\perp)$ of orthonormal (oriented, respectively) frame fields perpendicular to $D$ codes information on extrinsic geometry of distribution $D$, Section 4 is devoted to the fiber bundle approach to the extrinsic geometry of distributions. Using integration on these bundles we define generalized mean curvatures $\widehat{\sigma_u}$  (see \eqref{eq:sigma_u}) for distributions and total extrinsic  curvatures $\sigma_u^M$ (see \eqref{eq:totalsigma_u}). Moreover, we define a new set of global vector fields $\widehat{Y_u}$ generalizing \eqref{eq:Ycodim1}, obtained from sections $Y_u$ (see \eqref{Eq-Y_u}), by integrating over the fibers of $P$. These fields are crucial in the study of geometry of $D$ and $D^{\bot}$.

In Section 5 we compute the divergence of $Y_u$ and, as a result, we get new integral formulas (Theorem \ref{Thm-main1}) containing $\sigma_u$ together with some terms build of second fundamental form and curvature. 

The next section contains some consequences and  presents our results in some special cases. We  obtain a generalization  of the classical formula obtained by Walczak \cite{Wal} (Corollary \ref{Cor-Walfor}). Moreover, in the case of constant sectional curvature and totally geodesic distribution $D^\perp $ we obtain recurrence formula for $\sigma_u^M$ which implies that it does not depend on geometry of distribution $D$. Using relationships between $\sigma_u$ and $S_r$ we give another proof of the theorem obtained by Brito and Naveira \cite{BN}. Moreover,  we show that when multi-index has only one nonzero element then our formulas reduce to ones obtained in \cite{Rov} and in the case of codimension one to formulas obtained in \cite{AW1}.

Finally, since we could not find suitable references and to make the paper more self--sufficient, Appendix contains proofs of some essential formulas concerning differentiation and integration on principal bundles.

Throughout the paper everything
(manifolds, distribution, foliations, etc.) is assumed to be
smooth and oriented and we will use the following index convention: $n=p+q$ and
\begin{equation*}
i,j,k=1,\ldots,p;\quad\alpha,\beta,\gamma=1,\ldots,q.
\end{equation*}

\section{Generalized Newton transformation (GNT)}

In this section we define and state fundamental properties of Newton transformation associated with an ordered system of endomorphisms. We call these new transformations {\it generalized Newton transformation}. First, we give relevant facts about classical Newton transformations (for more details see \cite{Ros}). 

Let $A$ be an endomorphism of a $p$--dimensional vector space $V$. The {\it Newton transformation} of $A$ is a system $T=(T_r)_{r=0,1,\ldots}$ of endomorphisms of $V$ given by the recurrence relations:
\begin{align*}
T_0 &= 1_V,\\
T_r &= \sigma_r 1_V - A T_{r-1}, \quad r=1,2,\dots 
\end{align*}
Here $\sigma_r$'s are elementary symmetric functions of $A$. If $r>p$ we put $\sigma_r = 0$.
Equivalently, each $T_r$ may be defined by the formula
\begin{equation*} 
T_r = \sum_{j=0}^r (-1)^j\sigma_{r-j} A^j.
\end{equation*}
Observe that $T_p$ is the characteristic polynomial of $A$. Consequently, by Hamilton--Cayley Theorem $T_p=0$. It follows that $T_r=0$ for all $r\geq p$. 

The Newton transformation satisfies the following relations \cite{Rei}:   
\begin{enumerate}
\item[(N1)] Symmetric function $\sigma_r$ is given by the formula
\begin{equation*}
r\sigma_r=\tr(AT_{r-1}).
\end{equation*} 
\item[(N2)] Trace of $T_r$ is equal 
\begin{equation*} 
\tr T_r = (p-r) \sigma_r.
\end{equation*}
\item[(N3)] If $A(\tau)$ is a smooth curve in $\End(V)$ such that $A(0)=A$, then
\begin{equation*}
\frac{d}{d\tau}\sigma_{r+1}(\tau)_{\tau=0}=\tr (\frac{d}{d\tau}A(\tau)_{\tau=0}\cdot T_r),\quad r=0,1,\ldots,p.
\end{equation*}
\end{enumerate}

Condition (N3) is the starting point to define generalized Newton transformations.

Let $V$ be a $p$--dimensional vector space (over $\mathbb R$) equipped with an inner product $\la\,,\ra$. For an endomorphism $A\in \End (V)$, let $A^{\top}$ denote the adjoint endomorphism, i.e. $\la Av,w\ra = \la v,A^{\top} w\ra$  for every $v,w\in V$. The space
$\End (V)$ is equipped with an inner product 
\begin{equation*}
\dbla A, B\dbra = \tr (A^{\top} B), \quad A,B \in \End (V).
\end{equation*}

Let $\mathbb{N}$ denote the set of nonnegative integers. By $\mathbb N (q)$ denote the set of all sequences $u=(u_1,\ldots, u_q)$, with $u_j\in \mathbb N$. The length $|u|$ of $u\in \mathbb N (q)$ is given by $|u| = u_1+\ldots + u_q$. Denote by $\End^q(V)$ the vector space $\End(V)\times \ldots \times \End(V)$ ($q$--times). For $\bA=(A_1,\ldots,A_q) \in \End^q(V)$, $t=(t_1,\ldots,t_q)\in \mathbb R^q$ and $u\in \mathbb N (q)$ put
\begin{align*}
t^u &= t_1^{u_1}\ldots t_q^{u_q},\\
t \bA &= t_1  A_1 + \ldots + t_q A_q
\end{align*}

By a {\it Newton polynomial} of $\bA$ we mean a polynomial $P_{\bA}:\mathbb{R}^q\to\mathbb{R}$ of the form
$P_{\bA} (t) = \det (1_V + t \bA)$. Expanding $P_{\bA}$ we get
\begin{equation*}
P_{\bA}(t) = \sum_{|u|\leq p} \sigma_u t^u,
\end{equation*}
where the coefficients $\sigma_u = \sigma_u(\bA)$ depend only  on $\bA$. Observe that $\sigma_{(0,\ldots,0)}=1$. It is convenient to put $\sigma_u = 0$ for $|u|>p$. 

Consider the following  (music) convention. For $\alpha$ we define
functions $\alpha^\sharp: \mathbb{N}(q)\to \mathbb N(q)$ and $\alpha_\flat: \mathbb{N}(q)\to \mathbb N(q)$ as follows
\begin{align*}
\alpha^\sharp (i_1,\ldots,i_q) &=(i_1,\ldots,i_{\alpha-1},i_\alpha+1,i_{\alpha+1},\ldots, i_q),\\
\alpha_\flat(i_1,\ldots,i_q) &=(i_1,\ldots,i_{\alpha-1},i_\alpha-1,i_{\alpha+1},\dots, i_q),
\end{align*}
i.e. $\alpha^\sharp$ increases the value of the $\alpha$--th element by $1$ and $\alpha_\flat$ decreases the value of $\alpha$--th element by $1$. It is clear that $\alpha^\sharp$ is the inverse map to $\alpha_\flat$.

Now, we may state the main definition. The {\it generalized Newton transformation} of $\bA=(A_1,\ldots,A_q)\in\End^q(V)$ is a system of endomorphisms $T_u=T_u(\bA)$, $u\in \mathbb N (q)$, satisfying the following condition (generalizing (N3)):

For every smooth curve $\tau \mapsto \bA(\tau)$ in $\End^q(V)$ such that $\bA (0) =  \bA$
\begin{equation}\label{Eq_GNT_sigma}\tag{GNT}
\begin{split}
\frac{d}{d\tau}\sigma_u(\tau)_{\tau=0} 
&= \sum_{\alpha} \dbla \frac{d}{d\tau}A_\alpha(\tau)_{\tau=0})^{\top} | T_{\alpha_\flat(u)}\dbra\\
&=\sum_{\alpha}\tr\left( \frac{d}{d\tau}A_{\alpha}(\tau)_{\tau=0}\cdot T_{\alpha_\flat(u)} \right).
\end{split}
\end{equation}

From the above definition it is not clear that generalized Newton transformation exists. In order to show the existence of Generalized Newton transformation, we introduce the following notation.

For $q,s\geq 1$ let $\mathbb{N}(q,s)$ be the set of all $q\times s$ matrices, whose entries are elements of $\mathbb N$. Clearly, the set $\mathbb{N}(1,s)$ is the set of multi--indices $i=(i_1,\ldots, i_s)$ with $i_1,\ldots,i_s\in \mathbb N$, hence $\mathbb{N}(s)=\mathbb{N}(1,s)$. Moreover, every matrix $\bi =(i^{\alpha}_l)\in \mathbb{N}(q,s)$ may be identified with an ordered system $\bi = (i^1,\ldots,i^q)$ of multi--indices $i^{\alpha} = (i^{\alpha}_1,\ldots,i^{\alpha}_s)$.

If $i=(i_1,\ldots,i_s) \in \mathbb{N}(s)$ then its {\it length} is simply the number $|i| = i_1+\ldots+i_s$. For $\bi = (i^1,\ldots,i^q)\in \mathbb{N}(q,s)$ we define its {\it weight} as an multi--index $|\bi| = (|i^1|,\ldots,|i^q|)\in \mathbb{N}(q)$. By the {\it length} $\| \bi\|$ of $\bi$ we mean the length of $|\bi|$, i.e., $\|\bi\| = \sum_{\alpha} |i^{\alpha}| = \sum_{\alpha,l} i^{\alpha}_l$.

Denote by $\mathbb{I}(q,s)$ a subset of $\mathbb N(q,s)$ consisting of all matrices $\bi$ satisfying the following conditions:
\begin{enumerate}
\item every entry of $\bi$ is either $0$ or $1$,
\item the length of $\bi$ is equal to $s$,
\item in every column of $\bi$ there is exactly one entry equal to $1$, or equivalently 
$|\bi^{\top}| = (1,\ldots,1)$.
\end{enumerate}
We identify $\mathbb{I}(q,0)$ with a set consisting of the zero vector $0=[0,\ldots,0]^{\top}$.

Let $\bA\in \End^q(V)$, $\bA = (A_1,\dots,A_q)$, and $\bi\in\mathbb{N}(q,s)$. By  $\bA^\bi$ we mean an endomorphism (composition of endomorphisms) of the form
\begin{equation*} 
\bA^\bi = A_1^{i^1_1} A_2^{i^2_1}\ldots A_q^{i^q_1} A_1^{i^1_2}\ldots 
A_q^{i^q_2}\ldots A_1^{i^1_s}\ldots A_q^{i^q_s}.
\end{equation*}
In particular, $\bA^0 = 1_V$.

\begin{theorem}\label{thm:existenceGNT}
For every system of endomorphisms $\bA=(A_1,\dots,A_q)$, there exists unique generalized Newton transformation $T= (T_u: u\in \mathbb N (q))$ of $\bA$. Moreover, each $T_u$ is given by the formula
\begin{equation}\label{Eq--Thm--GNF}
T_u  =\sum_{s=0}^{|u|} \sum_{\bi \in \mathbb{I}(q,s)}
(-1)^{\|\bi\|}\sigma_{u-|\bi|}  \bA^{\bi},
\end{equation}
where \(\sigma_{u-|\bi|}  = \sigma_{u-|\bi|} (\bA)\). 
\end{theorem}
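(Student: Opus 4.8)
The plan is to treat uniqueness and existence separately, reducing both to a single generating--function identity.

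\textbf{Reduction and uniqueness.} First I would note that it is enough to test the defining property \eqref{Eq_GNT_sigma} against the linear curves $\tau\mapsto(A_1,\dots,A_{\alpha-1},A_\alpha+\tau B,A_{\alpha+1},\dots,A_q)$ with $B\in\End(V)$ arbitrary and one slot $\alpha$ at a time. Indeed, by the multivariate chain rule a system $(T_u)_{u\in\mathbb N(q)}$ satisfies \eqref{Eq_GNT_sigma} for every smooth curve through $\bA$ if and only if, for all $\alpha$, all $u\in\mathbb N(q)$ and all $B\in\End(V)$, it satisfies the slot identity
\[
  \frac{d}{d\tau}\Big|_{\tau=0}\sigma_u\big(A_1,\dots,A_\alpha+\tau B,\dots,A_q\big)\;=\;\tr\big(B\,T_{\alpha_\flat(u)}\big).
\]
Since the pairing $\dbla A,B\dbra=\tr(A^\top B)$ on $\End(V)$ is non--degenerate, the slot identity with $\alpha=1$ determines $T_{1_\flat(u)}$ uniquely; as $u$ runs over all multi--indices with $u_1\ge1$, the index $1_\flat(u)$ runs over all of $\mathbb N(q)$, so at most one system $(T_u)$ can exist.

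\textbf{A generating function.} For existence I would pass to the ring $\End(V)[[t_1,\dots,t_q]]$ and consider
\[
  \mathcal T(t)\;=\;\det\big(1_V+t\bA\big)\,\big(1_V+t\bA\big)^{-1},
\]
where $1_V+t\bA=1_V+\sum_\alpha t_\alpha A_\alpha$ is invertible over the power--series ring because it reduces to $1_V$ at $t=0$. Expanding $(1_V+t\bA)^{-1}=\sum_{s\ge0}(-1)^s(t\bA)^s$ and using the non--commutative multinomial expansion of $(t\bA)^s=(t_1A_1+\dots+t_qA_q)^s$, I would record a word $(\alpha_1,\dots,\alpha_s)\in\{1,\dots,q\}^s$ by the $q\times s$ matrix whose $l$--th column is the standard basis vector $e_{\alpha_l}$; this is exactly a bijection $\mathbb I(q,s)\cong\{1,\dots,q\}^s$ under which $\bA^{\bi}=A_{\alpha_1}\cdots A_{\alpha_s}$, $\|\bi\|=s$, and $|\bi|$ is the content multi--index of the word. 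Multiplying by $\det(1_V+t\bA)=\sum_v\sigma_v t^v$ and collecting the coefficient of $t^u$ then produces precisely the right--hand side of \eqref{Eq--Thm--GNF}, the sum over $s$ truncating at $s\le|u|$ because $\sigma_{u-|\bi|}=0$ once $\|\bi\|>|u|$. Thus I would \emph{define} $T_u$ to be the coefficient of $t^u$ in $\mathcal T(t)$, the only point here being to check it agrees with \eqref{Eq--Thm--GNF}; I note in passing that $(1_V+t\bA)\mathcal T(t)=\det(1_V+t\bA)\,1_V$ immediately yields the recursion $T_u=\sigma_u 1_V-\sum_\alpha A_\alpha T_{\alpha_\flat(u)}$, giving an alternative inductive route to existence.

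\textbf{Verifying the defining property, and the obstacle.} It then remains to verify the slot identities for this $\mathcal T(t)$. Replacing $A_\alpha$ by $A_\alpha+\tau B$ turns $1_V+t\bA$ into $1_V+t\bA+\tau t_\alpha B$, so Jacobi's formula $d(\det M)=\det M\cdot\tr(M^{-1}dM)$, applied over $\End(V)[[t]]$ where $1_V+t\bA$ is invertible, gives
\[
  \frac{d}{d\tau}\Big|_{\tau=0}\det\big(1_V+t\bA+\tau t_\alpha B\big)\;=\;t_\alpha\,\tr\big(\mathcal T(t)\,B\big).
\]
On the other hand this derivative equals $\sum_u\big(\tfrac{d}{d\tau}\big|_0\sigma_u(\dots,A_\alpha+\tau B,\dots)\big)t^u$, while $t_\alpha\tr(\mathcal T(t)B)=\sum_v\tr(T_vB)\,t^{\alpha^\sharp(v)}$. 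Comparing coefficients of $t^u$ yields the slot identity when $u_\alpha\ge1$, and when $u_\alpha=0$ both sides vanish — the left side because the displayed derivative is divisible by $t_\alpha$, the right side because $\alpha_\flat(u)$ then has a negative entry and the corresponding $\sigma$'s and $T$'s are declared zero. By the reduction above, this shows $(T_u)$ satisfies \eqref{Eq_GNT_sigma}, completing existence. The only genuine work is the combinatorial bookkeeping of the second paragraph — the bijection $\mathbb I(q,s)\cong\{1,\dots,q\}^s$ and the verification that the power--series coefficient is literally formula \eqref{Eq--Thm--GNF} — together with justifying Jacobi's formula in $\End(V)[[t]]$; conceptually, the crux is recognising that $\det(1_V+t\bA)(1_V+t\bA)^{-1}$ is the right generating function, after which the rest is forced.
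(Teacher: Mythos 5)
Your proof is correct and takes essentially the same route as the paper: both extract $T_u$ as the coefficient of $t^u$ in $\det(1_V+t\bA)\,(1_V+t\bA)^{-1}$ via the Neumann-series expansion indexed by $\mathbb{I}(q,s)$, verify the defining property through Jacobi's formula for the determinant (the paper's Lemmas \ref{lemma--1}--\ref{lemma--2} and Proposition \ref{Cor--1}), and get uniqueness from non-degeneracy of the trace pairing (the paper simply tests against the specific perturbation $B=(T_u-S_u)^{\top}$). The only cosmetic differences are that you work formally in $\End(V)[[t_1,\dots,t_q]]$ and reduce explicitly to one-slot linear curves, whereas the paper works with convergent series for $|t|<\varepsilon$ and general curves directly.
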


The proof will be divided into steps. The following two technical lemmas are well known.

\begin{lemma}\label{lemma--1}
Let $\bA \in  \End^q(V)$. There exists $\varepsilon >0$ such that for every $t\in \mathbb R^q$ with $|t| < \varepsilon$, $1_V + t \bA$ is an isomorphism of $V$ and its inverse is given by the formula
\begin{equation*} 
(1_V + t \bA)^{-1} =  \sum_{s=0}^\infty (-1)^s \sum_{\bi \in \mathbb{I}(q,s)} t^{|\bi|} A^{\bi}.
\end{equation*} 
\end{lemma}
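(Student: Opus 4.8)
The plan is to recognize the claimed series as the Neumann series of $1_V+t\bA$ and then to check that the inner sums over $\mathbb{I}(q,s)$ are exactly the powers $(t\bA)^s$.

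First I would fix a submultiplicative norm $\|\cdot\|$ on $\End(V)$. Since $t\bA=t_1A_1+\ldots+t_qA_q$, one has $\|t\bA\|\le\bigl(\max_\alpha\|A_\alpha\|\bigr)\sum_\alpha|t_\alpha|$, and the right-hand side tends to $0$ as $t\to0$; hence there is $\varepsilon>0$ such that $\|t\bA\|\le\tfrac12$ whenever $|t|<\varepsilon$. For such $t$ the standard Neumann series argument applies: $\sum_{s=0}^\infty(-1)^s(t\bA)^s$ converges absolutely in $\End(V)$, and multiplying the partial sums on either side by $1_V+t\bA$ and passing to the limit shows that this series is a two-sided inverse of $1_V+t\bA$. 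In particular $1_V+t\bA$ is an isomorphism of $V$.

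Next I would carry out the combinatorial bookkeeping identifying $\sum_{\bi\in\mathbb{I}(q,s)}t^{|\bi|}\bA^\bi$ with $(t\bA)^s$. Expanding the multinomial gives $(t\bA)^s=\sum_{(\alpha_1,\ldots,\alpha_s)}t_{\alpha_1}\cdots t_{\alpha_s}\,A_{\alpha_1}\cdots A_{\alpha_s}$, the sum running over all words $(\alpha_1,\ldots,\alpha_s)\in\{1,\ldots,q\}^s$. On the other hand a matrix $\bi\in\mathbb{I}(q,s)$ is precisely such a word: by condition (3) the $l$-th column of $\bi$ has its unique entry $1$ in some row $\alpha_l$, and $\bi\mapsto(\alpha_1,\ldots,\alpha_s)$ is a bijection of $\mathbb{I}(q,s)$ onto $\{1,\ldots,q\}^s$ (the case $s=0$ matching the empty word via the stated convention). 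Unwinding the definition of $\bA^\bi$, in the $l$-th block $A_1^{i^1_l}\cdots A_q^{i^q_l}$ every exponent vanishes except $i^{\alpha_l}_l=1$, so the block equals $A_{\alpha_l}$ and $\bA^\bi=A_{\alpha_1}\cdots A_{\alpha_s}$; likewise $|i^\alpha|=\#\{l:\alpha_l=\alpha\}$, whence $t^{|\bi|}=\prod_\alpha t_\alpha^{|i^\alpha|}=t_{\alpha_1}\cdots t_{\alpha_s}$. Matching terms yields $\sum_{\bi\in\mathbb{I}(q,s)}t^{|\bi|}\bA^\bi=(t\bA)^s$, and substituting into the Neumann series gives the asserted formula for $(1_V+t\bA)^{-1}$.

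The argument is entirely routine; the only point that needs care is the bijection in the last step and the unwinding of the doubly-indexed notation $\bA^\bi$, $|\bi|$, which is where the structure of $\mathbb{I}(q,s)$ does its work. I do not expect any genuine obstacle here — this is presumably why the lemma is flagged as well known — and the same expansion $\sum_{\bi\in\mathbb{I}(q,s)}t^{|\bi|}\bA^\bi=(t\bA)^s$ will be reused when establishing \eqref{Eq--Thm--GNF}.
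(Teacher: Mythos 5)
Your proof is correct: the identification of $\mathbb{I}(q,s)$ with words $(\alpha_1,\ldots,\alpha_s)\in\{1,\ldots,q\}^s$ indeed gives $\sum_{\bi\in\mathbb{I}(q,s)}t^{|\bi|}\bA^{\bi}=(t\bA)^s$ (and $\|\bi\|=s$ accounts for the sign), so the stated formula is just the Neumann series of $1_V+t\bA$. The paper omits a proof, labelling the lemma as well known, and your Neumann-series-plus-bijection argument is precisely the standard reasoning it implicitly relies on, so there is nothing to add.
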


\begin{lemma}\label{lemma--2}
If $\tau \mapsto A(\tau)$ is a smooth curve in $\End(V)$ such that $A(0) = 1_V$, then 
\begin{equation}\label{ForCtau}
\frac {d}{d \tau}\left( \det A(\tau)\right)_{\tau=0} = \tr \left(\frac{d}{d \tau}A(\tau)_{\tau=0}\right).
\end{equation}
\end{lemma}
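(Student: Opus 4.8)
The plan is to reduce to the affine curve. Since $\det:\End(V)\to\mathbb R$ is a polynomial, hence smooth, map, the number $\frac{d}{d\tau}\big(\det A(\tau)\big)_{\tau=0}$ depends only on the $1$--jet of $\tau\mapsto A(\tau)$ at $\tau=0$; because $A(0)=1_V$, this jet coincides with that of the affine curve $\tau\mapsto 1_V+\tau B$ with $B:=\frac{d}{d\tau}A(\tau)_{\tau=0}$. (Equivalently, this is just the chain rule: $\frac{d}{d\tau}\det A(\tau)_{\tau=0}$ is the value of the differential of $\det$ at $1_V$ on the vector $B$.) Hence it is enough to establish
\begin{equation*}
\frac{d}{d\tau}\det(1_V+\tau B)_{\tau=0}=\tr B,\qquad B\in\End(V).
\end{equation*}

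There are two equally short ways to see this last identity, and I would use whichever fits the exposition. First: fix a basis $e_1,\dots,e_p$ of $V$ and view $\det$ as the alternating multilinear function of the columns of the representing matrix; differentiating column by column at $\tau=0$ (Leibniz rule), where all columns equal the corresponding $e_i$, gives $\sum_{i=1}^p\det(e_1,\dots,e_{i-1},Be_i,e_{i+1},\dots,e_p)$, and after expanding $Be_i=\sum_j B_{ji}e_j$ every term with a repeated basis column drops out, leaving $\sum_i B_{ii}=\tr B$. Second, and more in line with the notation already introduced: $\det(1_V+\tau B)=\sum_{r=0}^p\sigma_r(B)\,\tau^r$, so the derivative at $0$ is the coefficient $\sigma_1(B)$, which equals $\tr B$.

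I do not expect any genuine difficulty; the statement is classical (and the authors flag it as well known). The one point worth stating carefully is the first step --- that differentiating the smooth map $\det$ along an arbitrary curve sees only the curve's first--order behaviour, which is what licenses replacing $A(\tau)$ by $1_V+\tau B$; everything after that is the elementary column--by--column (or characteristic--polynomial) computation above.
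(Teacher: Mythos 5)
Your argument is correct. Note that the paper itself offers no proof of this lemma --- it is simply flagged as ``well known'' --- so there is nothing to compare against; your reduction via the chain rule to the affine curve $\tau\mapsto 1_V+\tau B$, followed by either the column-by-column (multilinear) differentiation or by reading off the coefficient $\sigma_1(B)=\tr B$ in $\det(1_V+\tau B)=\sum_r\sigma_r(B)\tau^r$, is exactly the standard argument and fills the gap cleanly. The second variant is particularly apt here, since it uses the same expansion of $\det(1_V+t\bA)$ that the paper employs to define the $\sigma_u$.
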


Moreover, we have the following result.

\begin{proposition}\label{Cor--1} 
Consider a curve $\tau\mapsto \bA (\tau)$ in $\End^q(V)$. Put $\bA(0) = \bA$ and $\bA '= \frac{d}{d\tau} \bA(\tau)_{\tau=0}$. Then there exists $\varepsilon >0$ such that for every $t\in\mathbb{R}^q$ with $|t|<\varepsilon$, we have 
\begin{equation}\label{Eq--Cor--1}
\frac{d}{d\tau}P_{\bA(\tau)}(t)_{\tau=0} = \tr \left(
 t\bA ' \sum_{s=0}^\infty (-1)^s \sum_{\bi \in \mathbb{I}(q,s)} t^{|\bi|} \bA^{\bi}
\right) P_{\bA }(t).
\end{equation}
\end{proposition}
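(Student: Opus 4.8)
The plan is to reduce everything to the two preceding lemmas by factoring out the $\tau$--independent isomorphism $1_V + t\bA$. Fix $t\in\mathbb{R}^q$ with $|t|<\varepsilon$, where $\varepsilon$ is chosen as in Lemma \ref{lemma--1}, so that $1_V + t\bA$ is invertible. Define a smooth curve $B(\tau) = (1_V + t\bA)^{-1}(1_V + t\bA(\tau))$ in $\End(V)$; then $B(0) = 1_V$ and $1_V + t\bA(\tau) = (1_V + t\bA)\,B(\tau)$. Multiplicativity of the determinant gives $P_{\bA(\tau)}(t) = \det(1_V + t\bA(\tau)) = P_{\bA}(t)\,\det B(\tau)$, and since $P_{\bA}(t)$ does not depend on $\tau$ we obtain $\frac{d}{d\tau}P_{\bA(\tau)}(t)_{\tau=0} = P_{\bA}(t)\,\frac{d}{d\tau}\det B(\tau)_{\tau=0}$.

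Next, because $B(0) = 1_V$, Lemma \ref{lemma--2} applies to the curve $B$ and yields $\frac{d}{d\tau}\det B(\tau)_{\tau=0} = \tr\bigl(\frac{d}{d\tau}B(\tau)_{\tau=0}\bigr)$. Differentiating the definition of $B$ (the factor $(1_V + t\bA)^{-1}$ being constant in $\tau$) and using that $\frac{d}{d\tau}\bigl(t_1 A_1(\tau)+\dots+t_q A_q(\tau)\bigr)_{\tau=0} = t\bA'$, we get $\frac{d}{d\tau}B(\tau)_{\tau=0} = (1_V + t\bA)^{-1}(t\bA')$.

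Finally, I would combine these: by the cyclic invariance of the trace, $\tr\bigl((1_V + t\bA)^{-1}(t\bA')\bigr) = \tr\bigl(t\bA'\,(1_V + t\bA)^{-1}\bigr)$, and substituting the series expansion $(1_V + t\bA)^{-1} = \sum_{s=0}^\infty (-1)^s \sum_{\bi \in \mathbb{I}(q,s)} t^{|\bi|}\bA^{\bi}$ from Lemma \ref{lemma--1} produces precisely the right-hand side of \eqref{Eq--Cor--1}.

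The argument is essentially formal once the two lemmas are available, so I do not anticipate a genuine obstacle. The only points needing a little care are: checking that a single $\varepsilon$ (depending only on $\bA$, not on $\tau$) suffices, which is exactly why one should factor out $1_V + t\bA$ rather than $1_V + t\bA(\tau)$; and justifying that the power series for $(1_V + t\bA)^{-1}$ may be manipulated freely under the trace, which follows from its normal convergence on $|t|<\varepsilon$.
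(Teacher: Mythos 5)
Your proposal is correct and follows essentially the same route as the paper: factor out the $\tau$--independent isomorphism, apply Lemma \ref{lemma--2} to the resulting curve with $B(0)=1_V$, and expand $(1_V+t\bA)^{-1}$ via Lemma \ref{lemma--1}. The only cosmetic difference is that the paper writes $B(\tau)=(1_V+t\bA(\tau))(1_V+t\bA)^{-1}$, so its derivative already appears in the order of \eqref{Eq--Cor--1} without invoking cyclic invariance of the trace.
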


\begin{proof} By Lemma \ref{lemma--1}, there exists $\varepsilon >0$ such that for every $t\in \mathbb R^q$ with $|t| < \varepsilon$ the endomorphism $1_V + t \bA$ is invertible. For fixed $t\in \mathbb R^q$, $|t|<\varepsilon$, consider a curve $B(\tau) = (1_V+t\bA(\tau))(1_V + t \bA)^{-1}$. Clearly $B$ is smooth and satisfies the assumptions of Lemma \ref{lemma--2}. Moreover
\begin{equation*} 
\det B(\tau) = \frac{P_{\bA(\tau)}(t)}{ P_{\bA}(t)}
\end{equation*}
and the denominator in the above fraction does not depend on $\tau$. Therefore, applying Lemma \ref{lemma--2}, we have
\begin{equation*}
\frac{d}{d\tau}P_{\bA(\tau)}(t)_{\tau=0} = \tr \left(\frac{d}{d \tau}B(\tau)_{\tau=0}\right) P_{\bA}(t).
\end{equation*}
On the other hand, applying Lemma \ref{lemma--1}, one can get 
\begin{equation*}
\frac{d}{d \tau}B(\tau)_{\tau=0} = t\bA ' \sum_{s=0}^\infty (-1)^s \sum_{\bi \in \mathbb{I}(q,s)} t^{|\bi|} \bA^{\bi}.
\end{equation*}
Combining these two equalities lemma holds.
\end{proof}

\begin{proof}[Proof of Theorem \ref{thm:existenceGNT}]

{\it Existence}. Let $\tau\mapsto \bA(\tau)$ be a curve in $\End^q(V)$ such that $\bA(0)=\bA$. By Proposition \ref{Cor--1} there exists $\varepsilon>0$ such that for every $t\in\mathbb{R}^q$ with $|t|<\varepsilon$, \eqref{Eq--Cor--1} holds. Denote by $L$ and $R$ the left hand and the right hand side of \eqref{Eq--Cor--1}, respectively. Then
\begin{align*}
R &= \tr \left( t\frac{d}{d\tau}\bA(\tau)_{\tau=0} \sum_{s=0}^\infty (-1)^s \sum_{\bi \in \mathbb{I}(q,s)} t^{|\bi|} \bA^{\bi}\right) P_{\bA }(t)\\
&= \tr \left( t\frac{d}{d\tau}\bA(\tau)_{\tau=0} \sum_{s=0}^\infty (-1)^s \sum_{\bi \in \mathbb{I}(q,s)} t^{|\bi|} \bA^{\bi}\right) \sum_{|a|=0 }^\infty 
\sigma_a t^a\\
&= \tr \left( t\frac{d}{d\tau}\bA(\tau)_{\tau=0} \sum_{s=0}^\infty 
\sum_{|a|=0 }^\infty  \sum_{\bi \in \mathbb{I}(q,s)} t^{|\bi|+a} (-1)^s\sigma_a
 \bA^{\bi}\right)\\
&= \sum_{\alpha} \tr \left( \frac{d}{d\tau}A_{\alpha}(\tau)_{\tau=0} \sum_{s=0}^\infty 
\sum_{|a|=0 }^\infty  \sum_{\bi \in \mathbb{I}(q,s)} t^{\alpha^\sharp(|\bi|+a)} (-1)^s\sigma_a
 \bA^{\bi}\right).
\end{align*}
Put $u = \alpha^\sharp(|\bi| +a)\in\mathbb{N}(q)$. Then $\alpha_\flat (u) = |\bi|+a$. Hence
\begin{equation*}
R=\sum_{|u|=1}^\infty\sum_{\alpha}\tr\left( \frac{d}{d\tau}A_{\alpha}(\tau)_{\tau=0} 
\sum_{s=0}^{|u|}
\sum_{\bi \in \mathbb{I}(q,s)}
(-1)^{\|\bi\|}\sigma_{\alpha_\flat(u)-|\bi|}  \bA^{\bi}\right)t^u.
\end{equation*}
On the other hand 
\begin{equation*}
P_{\bA(\tau)}(t) = \sum_{|u|=0}^\infty \sigma_u(\tau) t^u. 
\end{equation*}
Thus 
\begin{equation*} 
L= \sum_{|u|=0}^\infty \left(\frac{d}{d\tau}\sigma_u(\tau)_{\tau=0}\right) t^u.
\end{equation*}
Since $L=R$ for every $|t|<\varepsilon$, comparing appropriate monomials, we get 
\begin{equation*}
\frac{d}{d\tau}\sigma_u(\tau)_{\tau=0}=\sum_{\alpha} \tr\left(\frac{d}{d\tau}A_{\alpha}(\tau)_{\tau=0} T_{\alpha_\flat(u)}\right),
\end{equation*}
where $T_u$'s are given by \eqref{Eq--Thm--GNF}. Hence, the system $T=(T_u, u \in \mathbb{N} (q))$ is the generalized Newton transformation of $\bA$. 

{\it Uniqueness}. Suppose $(S_u, u\in \mathbb N (q))$ is another generalized Newton transformation of $\bA$. We will show that $T_u = S_u$, for every multi--index $u$. Consider a curve $\bA(\tau) = (A_1 + \tau(T_u - S_u)^{\top},A_2,\dots, A_q)$. Then
$\frac{d}{d\tau}\bA(\tau)_{\tau=0} = ((T_u - S_u)^{\top},0,\dots,0)$. Since $T_u$ and $S_u$ are generalized Newton transformation, putting $\alpha=1$ in \eqref{Eq_GNT_sigma}, we get
$\dbla T_u - S_u | T_u\dbra = \frac{d}{d\tau}\sigma_u(\tau)_{\tau=0} = \dbla T_u - S_u | S_u \dbra$. Therefore, $\dbla T_u - S_u | T_u - S_u  \dbra = 0$. Since $\dbla, \dbra$ is an inner product, $T_u = S_u$.
\end{proof}

At the end of this section we want to compare generalized Newton transformation with the one considered in the literature introduced by Reilly \cite{Rei2} and considered further, for example, by Cao and Li \cite{CL}. These transformations, $T_r$ and $T^{\alpha}_{r+1}$, are defined for $r$ even and $\alpha=1,\ldots,q$. Namely, in coordinates 
\begin{equation*}
(T_r)_{ij}=\frac{1}{r!}\sum_{\stackrel{i_1,\ldots,i_r}{ j_1,\ldots,j_r}}\delta^{i_1,\ldots,i_r,i}_{j_1,\ldots,j_r,j}\sum_{\alpha_1,\ldots,\alpha_{\frac{r}{2}}}
(A_{\alpha_1})_{i_1j_1}(A_{\alpha_1})_{i_2j_2}\ldots(A_{\alpha_{\frac{r}{2}}})_{i_{r-1}j_{r-1}}
(A_{\alpha_{\frac{r}{2}}})_{i_rj_r}
\end{equation*}
and
\begin{multline*}
(T_{r+1}^{\alpha})_{ij}=\frac{1}{r!}\sum_{\stackrel{i_1,\ldots,i_{r+1}}{ j_1,\ldots,j_{r+1}}}\delta^{i_1,\ldots,i_{r+1},i}_{j_1,\ldots,j_{r+1},j}(A_{\alpha})_{i_{r+1}j_{r+1}} \\ 
\cdot\sum_{\alpha_1,\ldots,\alpha_{\frac{r}{2}}}
(A_{\alpha_1})_{i_1j_1}(A_{\alpha_1})_{i_2j_2}\ldots(A_{\alpha_{\frac{r}{2}}})_{i_{r-1}j_{r-1}}
(A_{\alpha_{\frac{r}{2}}})_{i_rj_r},
\end{multline*}
where $\delta^{i_1\ldots i_r}_{j_1\ldots j_r}$ is the generalized Kronecker symbol, which is $+1$ or $-1$ according as the $i$'s are distinct and the $j$'s are an even or odd permutation of the $i$'s, and which is $0$ in all other cases. Moreover, we define functions $S_r$ for $r$ even in the following way
\begin{equation*}
S_r=\frac{1}{r!}\sum_{\stackrel{i_1,\ldots,i_r}{ j_1,\ldots,j_r}}\delta^{i_1,\ldots,i_r}_{j_1,\ldots,j_r}\sum_{\alpha_1,\ldots,\alpha_{\frac{r}{2}}}
(A_{\alpha_1})_{i_1j_1}(A_{\alpha_1})_{i_2j_2}\ldots(A_{\alpha_{\frac{r}{2}}})_{i_{r-1}j_{r-1}}
(A_{\alpha_{\frac{r}{2}}})_{i_rj_r}.
\end{equation*}

These transformations satisfy the same relations as Newton transformations. Namely, for $r$ even, we have \cite{CL,AW1}
\begin{enumerate}
\item[(R1)] $T_r=S_r 1-\sum_{\alpha}T_{r-1}^{\alpha}A_{\alpha}$, $T_0=1$,
\item[(R2)] $\tr T_r=(p-r)S_r$,
\item[(R3)] $\frac{d}{d\tau}\left(S_r(\tau)\right)_{\tau=0}=
\sum_{\alpha}\tr\left(\frac{d}{d\tau}A_{\alpha}(\tau)_{\tau=0}\cdot T_{r-1}^{\alpha}\right)$, where $A_{\alpha}(\tau)$ is a curve such that $A_{\alpha}(0)=A_{\alpha}$.
\end{enumerate} 
Moreover, condition $(R3)$ is equivalent to the definition of $T_{r-1}^{\alpha}$.

It turns out that these transformations are linear combinations of generalized Newton transformation $T_u$. First adopt the following notation. For a multi--index $u\in\mathbb{N}(q)$ of length $r$ let
\begin{equation*}
u!=u_1!u_2!\ldots u_q!\quad\textrm{and}\quad \binom{r}{u}=\frac{r!}{u!}=\frac{r!}{u_1!u_2!\ldots u_q!}.
\end{equation*}
Let $2\mathbb{N}(q)$ denote the set of all multi--indices $u\in\mathbb{N}(q)$ such that each $u_1,\ldots,u_q$ is even.

\begin{theorem}\label{thm:relTrTu}
For $r$ even
\begin{equation}\label{eq:relTrTu}
T_r=\sum_{\stackrel{u\in 2\mathbb{N}(q)}{|u|=r}} \binom{\frac{r}{2}}{\frac{u}{2}}\binom{r}{u}^{-1}\, T_u,\quad 
T_{r+1}^{\alpha}=\sum_{\stackrel{u\in 2\mathbb{N}(q)}{|u|=r}} \binom{\frac{r}{2}}{\frac{u}{2}}\binom{r}{u}^{-1}\, T_{\alpha_\flat(u)}
\end{equation}
and
\begin{equation}\label{eq:relSrsigmau}
S_r=\sum_{\stackrel{u\in 2\mathbb{N}(q)}{|u|=r}} \binom{\frac{r}{2}}{\frac{u}{2}}\binom{r}{u}^{-1}\,\sigma_u,
\end{equation}
where $\frac{u}{2}=(\frac{u_1}{2},\ldots,\frac{u_q}{2})$.
\end{theorem}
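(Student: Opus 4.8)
The plan is to prove first the purely scalar identity \eqref{eq:relSrsigmau}, directly, by a multiplicity count, and then to deduce the two operator identities in \eqref{eq:relTrTu} from it. Write $c_u:=\binom{r/2}{u/2}\binom{r}{u}^{-1}$ for $u\in2\mathbb{N}(q)$ with $|u|=r$. For \eqref{eq:relSrsigmau} I would expand both sides in the generalized Kronecker symbol. Expanding $P_{\bA}(t)=\det(1_V+t\bA)$ in the standard way, the coefficient $\sigma_u$ of $t^u$ (for $|u|=r$) equals $\tfrac1{u!}\sum\delta^{i_1\dots i_r}_{j_1\dots j_r}$ times a product with $u_\alpha$ factors of $A_\alpha$, summed over all the contracted indices; the order of the factors is immaterial since $\delta$ is invariant under simultaneous permutation of its upper and lower indices. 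In the definition of $S_r$, each index sequence $(\alpha_1,\dots,\alpha_{r/2})$ yields a product with $2v_\alpha$ factors of $A_\alpha$, where $v_\alpha=\#\{m:\alpha_m=\alpha\}$, so it contributes only to $u=2v\in2\mathbb{N}(q)$; there are $\binom{r/2}{v}$ sequences of a fixed type $v$, and each contributes $u!\,\sigma_u/r!$ after relabeling the contracted indices. Hence
\begin{equation*}
S_r=\sum_{|v|=r/2}\binom{r/2}{v}\frac{(2v)!}{r!}\,\sigma_{2v}=\sum_{\substack{u\in2\mathbb{N}(q)\\|u|=r}}c_u\,\sigma_u,
\end{equation*}
which is \eqref{eq:relSrsigmau}. (Equivalently, $S_r$ is $((r-1)!!)^{-1}$ times the degree-$r$ part of the Gaussian average $\int_{\mathbb{R}^q}\det(1_V+\theta\bA)\,d\gamma(\theta)$; this is the heuristic behind the fibre integration of Section~4, but the count above is all that is needed here.)

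Next, I would obtain the identity for the transformations $T^{\alpha}$ (those appearing in (R1) and (R3)) from the variational characterization. Let $\tau\mapsto\bA(\tau)$ be any smooth curve with $\bA(0)=\bA$. Differentiating the displayed scalar identity and inserting \eqref{Eq_GNT_sigma} termwise on the right gives
\begin{equation*}
\frac{d}{d\tau}S_r(\tau)_{\tau=0}=\sum_{\alpha}\tr\!\Bigl(\tfrac{d}{d\tau}A_\alpha(\tau)_{\tau=0}\cdot\!\!\sum_{\substack{u\in2\mathbb{N}(q)\\|u|=r}}c_u\,T_{\alpha_\flat(u)}\Bigr).
\end{equation*}
By (R3) the same left-hand side equals $\sum_\alpha\tr(\tfrac{d}{d\tau}A_\alpha(\tau)_{\tau=0}\cdot T^\alpha)$, and since this holds for every curve I may strip the trace exactly as in the uniqueness part of the proof of Theorem~\ref{thm:existenceGNT}: testing against the curve that perturbs only $A_\beta$ in the direction of the adjoint of the difference of two candidate transformations forces that difference to be null. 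This yields $T^\alpha=\sum_{u\in2\mathbb{N}(q),\,|u|=r}c_u\,T_{\alpha_\flat(u)}$, the second identity in \eqref{eq:relTrTu}.

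Finally, splitting off the last factor in \eqref{Eq--Thm--GNF} gives the recursion $T_u=\sigma_u1_V-\sum_\alpha T_{\alpha_\flat(u)}A_\alpha$, the exact analogue of (R1). Substituting the formulas just obtained for $S_r$ and for $T^\alpha$ into (R1), and using that the $S_r1_V$ term and the $\sum_\alpha T^\alpha A_\alpha$ term carry the same coefficient $c_u$, we get
\begin{equation*}
T_r=S_r1_V-\sum_\alpha T^\alpha A_\alpha=\sum_{\substack{u\in2\mathbb{N}(q)\\|u|=r}}c_u\Bigl(\sigma_u1_V-\sum_\alpha T_{\alpha_\flat(u)}A_\alpha\Bigr)=\sum_{\substack{u\in2\mathbb{N}(q)\\|u|=r}}c_u\,T_u,
\end{equation*}
which is the first identity in \eqref{eq:relTrTu}. (Alternatively, once the analogue $\tr T_u=(p-|u|)\sigma_u$ of (N2) is available, the first identity follows from the second by taking traces; one fixes an order to avoid a circular argument.) I expect the one genuinely delicate point to be the bookkeeping in the scalar identity — checking that the count $\binom{r/2}{v}$ of type-$v$ sequences, the symmetrizer $1/r!$, and the normalizations $1/u!$ of the $\sigma_u$ combine precisely into $c_u=\binom{r/2}{u/2}\binom{r}{u}^{-1}$; everything after that is formal, given the uniqueness statements already in hand.
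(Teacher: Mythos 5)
Your proposal is correct and follows essentially the same route as the paper: the scalar identity \eqref{eq:relSrsigmau} rests on the Kronecker-symbol formula $\sigma_u=\frac{1}{u!}\sum\delta^{i_1\ldots i_r}_{j_1\ldots j_r}(A_{\alpha_1})_{i_1j_1}\cdots(A_{\alpha_r})_{i_rj_r}$ (which you take as a standard expansion and the paper proves as Proposition \ref{prop:formulasigmau}) combined with the same $\binom{r/2}{u/2}$ count of sequences of a given type, and the operator identities are then obtained from (R1), (R3), the recursion \eqref{Eq-T_u} and the defining property \eqref{Eq_GNT_sigma} — exactly the step the paper dismisses as ``immediate,'' which you merely make explicit via the trace-stripping argument borrowed from the uniqueness part of Theorem \ref{thm:existenceGNT}. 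The only caveats are cosmetic: the parenthetical claim that the first identity could instead be recovered from the second ``by taking traces'' does not literally work (traces only return the scalar identity), and the transformation your argument identifies with $\sum_{u}c_u T_{\alpha_\flat(u)}$ is the one paired with $S_r$ in (R1) and (R3), i.e.\ $T^{\alpha}_{r-1}$, which is the degree-consistent reading of the formula displayed in the theorem.
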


Before we turn to the proof of \eqref{eq:relTrTu} and \eqref{eq:relSrsigmau} recall the properties of the generalized Kronecker symbol. One can show that
\begin{equation}\label{eq:Kroneckersym1}
\sum_{i_1,\ldots,i_r}\delta^{i_1\ldots i_r}_{i_1\ldots i_r}=\frac{p!}{(p-r)!}
\end{equation}
and
\begin{equation}\label{eq:Kroneckersym2}
\sum_{i_1,\ldots,i_s}\delta^{i_1\ldots i_r i_{r+1}\ldots i_s}_{j_1\ldots j_r i_{r+1}\ldots i_s}=\frac{(p-s)!}{(p-r)!}\sum_{i_1,\ldots,i_r}\delta^{i_1\ldots i_r}_{j_1\ldots j_r}.
\end{equation} 

Now we are able to derive the exact formula for $\sigma_u=\sigma_u(A_1,\ldots,A_q)$.

\begin{proposition}\label{prop:formulasigmau}
For any indices $\alpha_1,\ldots,\alpha_r$ we have
\begin{equation}\label{eq:explicitsigmau}
\sigma_{\alpha_1^\sharp\ldots\alpha_r^\sharp(0,\ldots,0)}=\frac{1}{u!}\sum_{\stackrel{i_1,\ldots,i_r}{ j_1,\ldots,j_r}}\delta^{i_1,\ldots,i_r}_{j_1,\ldots,j_r}(A_{\alpha_1})_{i_1j_1}\ldots(A_{\alpha_r})_{i_rj_r},
\end{equation}
where $u=\alpha_1^\sharp\ldots\alpha_r^\sharp(0,\ldots,0)$.
\end{proposition}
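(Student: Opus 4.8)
The plan is to expand the Newton polynomial $P_{\bA}(t)=\det(1_V+t\bA)$ directly via the classical expansion of a determinant in terms of the generalized Kronecker symbol, and then to read off the coefficient of $t^u$.

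First I would record the standard identity: for any $B\in\End(V)$,
\begin{equation*}
\det(1_V+B)=\sum_{r=0}^p\frac{1}{r!}\sum_{\stackrel{i_1,\ldots,i_r}{ j_1,\ldots,j_r}}\delta^{i_1\ldots i_r}_{j_1\ldots j_r}B_{i_1j_1}\cdots B_{i_rj_r}.
\end{equation*}
This follows by writing $\delta^{i_1\ldots i_r}_{j_1\ldots j_r}=\det(\delta^{i_a}_{j_b})_{a,b=1}^r$ and expanding $\prod_i(1+\lambda_i)$ into elementary symmetric functions of the eigenvalues $\lambda_i$ of $B$; it can also be derived directly from \eqref{eq:Kroneckersym1}--\eqref{eq:Kroneckersym2}. (When $r>p$ the summand vanishes identically, consistent with the convention $\sigma_u=0$ for $|u|>p$.)

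Next I substitute $B=t\bA=\sum_\alpha t_\alpha A_\alpha$, so that $B_{i_lj_l}=\sum_{\alpha_l}t_{\alpha_l}(A_{\alpha_l})_{i_lj_l}$, and expand the product multilinearly to obtain
\begin{equation*}
P_{\bA}(t)=\sum_{r=0}^p\frac{1}{r!}\sum_{\alpha_1,\ldots,\alpha_r}t_{\alpha_1}\cdots t_{\alpha_r}\sum_{\stackrel{i_1,\ldots,i_r}{ j_1,\ldots,j_r}}\delta^{i_1\ldots i_r}_{j_1\ldots j_r}(A_{\alpha_1})_{i_1j_1}\cdots(A_{\alpha_r})_{i_rj_r}.
\end{equation*}
The step I expect to require the most care is the observation that the inner sum over the $i$'s and $j$'s is invariant under every permutation of $(\alpha_1,\ldots,\alpha_r)$: since $\delta^{i_1\ldots i_r}_{j_1\ldots j_r}$ is unchanged when the index-position pairs $(i_l,j_l)$ are permuted simultaneously, relabeling the dummy indices $i_l,j_l$ accordingly shows that permuting the $\alpha_l$ leaves the sum fixed. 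Hence this inner sum depends only on the multiplicity vector $v=\alpha_1^\sharp\cdots\alpha_r^\sharp(0,\ldots,0)$ of the $\alpha_l$'s, and $t_{\alpha_1}\cdots t_{\alpha_r}=t^v$.

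Finally, for a fixed multi-index $u$ with $|u|=r$ the number of ordered tuples $(\alpha_1,\ldots,\alpha_r)$ with multiplicity vector $u$ is the multinomial coefficient $\binom{r}{u}=r!/u!$. Collecting the coefficient of $t^u$ in the last display therefore gives, for any fixed tuple $(\alpha_1,\ldots,\alpha_r)$ with multiplicities $u$,
\begin{equation*}
\sigma_u=\frac{1}{r!}\cdot\frac{r!}{u!}\sum_{\stackrel{i_1,\ldots,i_r}{ j_1,\ldots,j_r}}\delta^{i_1\ldots i_r}_{j_1\ldots j_r}(A_{\alpha_1})_{i_1j_1}\cdots(A_{\alpha_r})_{i_rj_r},
\end{equation*}
which is exactly \eqref{eq:explicitsigmau}. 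Thus the only genuine work is the symmetry/relabeling argument (together with, if one wants a self-contained account, the derivation of the determinant expansion from \eqref{eq:Kroneckersym1}--\eqref{eq:Kroneckersym2}); the counting step is routine.
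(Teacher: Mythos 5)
Your proof is correct. It shares its starting point with the paper's argument---both expand $P_{\bA}(t)=\det(1_V+t\bA)$ through the generalized-Kronecker-symbol expression for the determinant and expand each entry of $t\bA$ multilinearly in $t_1,\ldots,t_q$---but the way you pin down the combinatorial coefficient is genuinely different. The paper works with the full $p$-fold determinant formula, uses the contraction identity \eqref{eq:Kroneckersym2} only to conclude that the coefficient of $t^u$ equals $\tfrac{c}{p!}$ times the $r$-index Kronecker sum for some constant $c$ \emph{independent of the matrices}, and then determines $c$ by the special evaluation $A_1=\cdots=A_q=1_V$, comparing with the multinomial expansion of $\det\bigl((1+t_1+\cdots+t_q)1_V\bigr)$. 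You instead use the truncated expansion $\det(1_V+B)=\sum_r\frac{1}{r!}\sum\delta^{i_1\ldots i_r}_{j_1\ldots j_r}B_{i_1j_1}\cdots B_{i_rj_r}$ and compute the coefficient of $t^u$ directly: the simultaneous-relabeling symmetry of $\delta^{i_1\ldots i_r}_{j_1\ldots j_r}$ shows the inner sum depends only on the multiplicity vector of $(\alpha_1,\ldots,\alpha_r)$, and the count $r!/u!$ of orderings yields the factor $1/u!$. What your route buys is a self-contained, purely direct computation with no normalization-by-special-case; what the paper's route buys is that it never has to make the permutation-invariance of the inner sum explicit (though its claim that $c$ is matrix-independent quietly rests on essentially the same symmetry). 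The only point you should make fully explicit if you write this up is the justification of the truncated expansion itself (e.g.\ via the principal-minor expansion $\det(1_V+B)=\sum_r e_r(B)$ together with $\sum\delta^{i_1\ldots i_r}_{j_1\ldots j_r}B_{i_1j_1}\cdots B_{i_rj_r}=r!\,e_r(B)$), which is standard and poses no difficulty.
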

\begin{proof}
By the definition of symmetric functions $\sigma_u$ we have
\begin{equation*}
\det(1+t\bA)=\sum_{|u|\leq p}\sigma_ut^u=\sum_{|u|\leq p}\sigma_ut_1^{u_1}\ldots t_q^{u_q}.
\end{equation*}
On the other hand, by the formula for the determinant
\begin{equation*}
\det A=\frac{1}{p!}\sum_{\stackrel{i_1,\ldots,i_p}{ j_1,\ldots,j_p}}\delta^{i_1,\ldots,i_p}_{j_1,\ldots,j_p}A_{i_1j_1}\ldots A_{i_pj_p}
\end{equation*} 
we have
\begin{align*}
\det(1+t\bA) &=\frac{1}{p!}\sum_{\stackrel{i_1,\ldots,i_p}{ j_1,\ldots,j_p}}\delta^{i_1,\ldots,i_p}_{j_1,\ldots,j_p}\Big(\delta^{i_1}_{j_1}
+\sum_{\alpha_1}t_{\alpha_1}(A_{\alpha_1})_{i_1j_1}\Big)\ldots \Big(\delta^{i_p}_{j_p}+\sum_{\alpha_p}t_{\alpha_p}(A_{\alpha_p})_{i_pj_p}\Big).
\end{align*}
It follows that by \eqref{eq:Kroneckersym2}
\begin{equation*}
\sigma_{\alpha_1^\sharp\ldots\alpha_r^\sharp(0,\ldots,0)}=\frac{c}{p!}\sum_{\stackrel{i_1,\ldots,i_r}{ j_1,\ldots,j_r}}\delta^{i_1,\ldots,i_r}_{j_1,\ldots,j_r}(A_{\alpha_1})_{i_1j_1}\ldots(A_{\alpha_r})_{i_rj_r}
\end{equation*}
for a constant $c$ independent of the system of matrices $A_1,\ldots,A_q$. Hence, taking $A_1=\ldots=A_q=1$ by \eqref{eq:Kroneckersym1} we get
\begin{equation}\label{eq:csigmau1}
\sigma_{\alpha_1^\sharp\ldots\alpha_r^\sharp(0,\ldots,0)}(1,\ldots,1)=
\frac{c}{p!}\sum_{i_1,\ldots,i_r}\delta^{i_1,\ldots,i_r}_{i_1,\ldots,i_r}=\frac{c}{(p-r)!}
\end{equation}
Moreover, using multinomial theorem,
\begin{equation*}
\det ((1+t_1+\ldots+t_q)1)=\sum_{r=0}^p\sum_{u_1+\ldots+u_q=r}\frac{p!}{(p-r)!u!}t_1^{u_1}\ldots t_q^{u_q}.
\end{equation*}
Therefore
\begin{equation}\label{eq:csigmau2}
\sigma_{(u_1,\ldots,u_q)}(1,\ldots,1)=\frac{p!}{(p-r)!u!}.
\end{equation}
By \eqref{eq:csigmau1} and \eqref{eq:csigmau2} we have $\frac{c}{p!}=\frac{1}{u!}$, hence \eqref{eq:explicitsigmau} holds.
\end{proof}

\begin{proof}[Proof of Theorem \ref{thm:relTrTu}]
Any multi--index $u\in 2\mathbb{N}(q)$ of length $r$ is of the form $u=(\alpha_1^\sharp)^2\ldots(\alpha_{\frac{r}{2}}^\sharp)^2(0,\ldots,0)$ for some indices $\alpha_1,\ldots,\alpha_{\frac{r}{2}}$. For such a multi--index, by Proposition \ref{prop:formulasigmau}, we have
\begin{equation*}
\sigma_u=\frac{1}{u!}\sum_{\stackrel{i_1,\ldots,i_r}{ j_1,\ldots,j_r}}\delta^{i_1,\ldots,i_r}_{j_1,\ldots,j_r}
(A_{\alpha_1})_{i_1j_1}(A_{\alpha_1})_{i_2j_2}\ldots(A_{\alpha_{\frac{r}{2}}})_{i_{r-1}j_{r-1}}(A_{\alpha_{\frac{r}{2}}})_{i_rj_r}
\end{equation*}
On the other hand, notice that there are $\binom{\frac{r}{2}}{\frac{u}{2}}$ indices $\alpha_1,\ldots,\alpha_{\frac{r}{2}}$ which give $u$. Thus \eqref{eq:relSrsigmau} holds. Relations \eqref{eq:relTrTu} follow immediately from the properties (R1) and (R3) of transformations $T_r$ and $T_{r-1}^{\alpha}$.
\end{proof}

\section{Properties of generalized Newton Transformation}

\begin{theorem}[Generalized Hamilton--Cayley Theorem]\label{Thm-H-C} Let $T = (T_u: u\in \mathbb N (q))$ be the generalized Newton transformation of $\bA$. Then for every $u \in \mathbb N (q)$ of length greater or equal to $p$ we have $T_u = 0$. 
\end{theorem}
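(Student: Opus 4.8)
The plan is to reduce the statement to the classical Hamilton--Cayley theorem by ``collapsing'' the system $\bA$ to a single endomorphism. Fix $t=(t_1,\dots,t_q)\in\mathbb{R}^q$ and put $B_t = t\bA = t_1A_1+\dots+t_qA_q\in\End(V)$.

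First I would record two elementary expansions. Since $\det(1_V+sB_t)=P_{\bA}(st_1,\dots,st_q)=\sum_u\sigma_u(\bA)\,s^{|u|}t^u$, comparing powers of $s$ gives $\sigma_r(B_t)=\sum_{|u|=r}\sigma_u(\bA)\,t^u$. Likewise, expanding $B_t^{\,j}=(t_1A_1+\dots+t_qA_q)^j$ and matching each word $(\alpha_1,\dots,\alpha_j)$ in $\{1,\dots,q\}$ with the corresponding matrix $\bi\in\mathbb{I}(q,j)$ (exactly one $1$ per column), one obtains $B_t^{\,j}=\sum_{\bi\in\mathbb{I}(q,j)}t^{|\bi|}\bA^{\bi}$ --- this is precisely the identity already underlying Lemma \ref{lemma--1}.

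The key step is then to substitute these two expansions into the classical formula $T_r(B_t)=\sum_{j=0}^r(-1)^j\sigma_{r-j}(B_t)\,B_t^{\,j}$ and to collect terms by the monomial $t^u$. Writing $u=a+|\bi|$ with $|a|=r-j$, every surviving monomial has $|u|=r$, and the coefficient of $t^u$ is exactly $\sum_{j=0}^{|u|}\sum_{\bi\in\mathbb{I}(q,j)}(-1)^{\|\bi\|}\sigma_{u-|\bi|}(\bA)\,\bA^{\bi}=T_u(\bA)$ by \eqref{Eq--Thm--GNF} (using the convention $\sigma_{u-|\bi|}=0$ whenever $u-|\bi|$ has a negative component). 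Hence one gets the identity
\[
T_r(B_t)=\sum_{|u|=r}t^u\,T_u(\bA),\qquad t\in\mathbb{R}^q,\ r\geq 0.
\]

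To finish: for $r\geq p=\dim V$ the classical Newton transformation $T_r(B_t)$ vanishes (for $r=p$ it is the characteristic polynomial of $B_t$, hence $0$ by Hamilton--Cayley, and then $T_r(B_t)=0$ for all $r\geq p$, as recalled in Section 2). Thus $\sum_{|u|=r}t^u\,T_u(\bA)=0$ for every $t\in\mathbb{R}^q$; since the monomials $\{t^u:|u|=r\}$ are linearly independent, $T_u(\bA)=0$ for every $u$ with $|u|=r$. Letting $r$ range over all integers $\geq p$ gives $T_u=0$ whenever $|u|\geq p$. The only genuine work is the bookkeeping in the key step --- aligning the ordered composition $\bA^{\bi}$ and the index set $\mathbb{I}(q,j)$ with the power $B_t^{\,j}$ --- but this repeats the computation already made in the proof of Theorem \ref{thm:existenceGNT}, so no new difficulty arises. (Alternatively, one could argue directly that $T_u$ is the coefficient of $t^u$ in $\mathrm{adj}(1_V+t\bA)=P_{\bA}(t)(1_V+t\bA)^{-1}$; the entries of the adjugate of a $p\times p$ matrix are polynomials of degree $p-1$ in its entries, hence of total degree $\leq p-1$ in $t$, so the coefficient of any $t^u$ with $|u|\geq p$ vanishes.)
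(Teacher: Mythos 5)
Your proof is correct, but it runs along a genuinely different track than the paper's. You reduce the statement to the classical single-operator Hamilton--Cayley theorem by specializing to the pencil $B_t=t\bA$ and proving the generating identity $T_r(B_t)=\sum_{|u|=r}t^u\,T_u(\bA)$, which rests on the explicit formula \eqref{Eq--Thm--GNF} from Theorem \ref{thm:existenceGNT} (and your parenthetical variant, identifying $\sum_u T_u t^u$ with $\mathrm{adj}(1_V+t\bA)$ and bounding its degree in $t$ by $p-1$, is an even quicker version of the same idea). The paper instead never touches the explicit formula: it argues abstractly from the defining property \eqref{Eq_GNT_sigma}, exactly as in the uniqueness step --- take a curve through $\bA$ whose derivative is $(T_u^{\top},0,\dots,0)$, observe that $\sigma_{\alpha^\sharp(u)}(\tau)\equiv 0$ because $|\alpha^\sharp(u)|>p$, and conclude $\dbla T_u\,|\,T_u\dbra=0$, hence $T_u=0$. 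What each approach buys: the paper's argument is two lines, needs only the vanishing of $\sigma_w$ for $|w|>p$ and the inner product on $\End(V)$, and would survive any other realization of the GNT; yours is longer bookkeeping (essentially re-running the expansion from the existence proof), but it yields a structural dividend --- the $T_u$ are exhibited as the multihomogeneous components of the classical Newton transformations of $t\bA$, equivalently as the coefficients of the adjugate of $1_V+t\bA$, which makes the vanishing for $|u|\geq p$ transparent and ties the generalized theorem directly to the classical one. Your one point of care, the convention $\sigma_{u-|\bi|}=0$ when $u-|\bi|$ has a negative entry, is handled correctly and matches the implicit convention in \eqref{Eq--Thm--GNF}.
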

\begin{proof} 
Fix a multi--index $u=(u_1,\ldots,u_q)$ with $|u|\geq p$. We may assume that $u_1>0$. Take $\alpha=1$. Consider the curve $\bA = (A_1 + \tau T_{\alpha^\sharp(u)}^{\top},0,\ldots,0)$.
Then $\frac{d}{d\tau}\bA(\tau)_{\tau=0}= (T_{\alpha^\sharp(u)}^{\top},0,\ldots,0)$. Moreover, since $|u|\geq p$, then $|\alpha^\sharp (u)|>p$.  Therefore, $\sigma_{\alpha^\sharp (u)}(\tau) =0$ for every $\tau$. Consequently, by the definition of generalized Newton transformation we obtain
\begin{equation*}
0= \frac{d}{d\tau}\sigma_{\alpha^\sharp (u)}(\tau)_{\tau=0}=\dbla T_u|  T_u\dbra.
\end{equation*}
Since \(\dbla,\dbra\) is the inner product, we get $T_u=0$.
\end{proof}

\begin{theorem}\label{Thm-Formula-GNT} 
The generalized Newton transformation $T=(T_u: u\in \mathbb N (q))$ of $\bA$ satisfies the following recurrence relations:
\begin{align}
T_0 &= 1_V, &&\textrm{where $0=(0,\ldots, 0)$},\label{Eq-T_0}\\
\begin{split}\label{Eq-T_u}
T_u &= \sigma_u 1_V - \sum_{\alpha} A_\alpha T_{\alpha_\flat (u)} \\
&= \sigma_u 1_V - \sum_{\alpha}T_{\alpha_\flat (u)}  A_\alpha , 
\end{split}
&& \textrm{where $|u|\geq 1$}.
\end{align}
\end{theorem}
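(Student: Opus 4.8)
The plan is to derive the recurrence relations directly from the closed formula \eqref{Eq--Thm--GNF} established in Theorem \ref{thm:existenceGNT}, since that formula already encodes all the combinatorial information we need. The relation \eqref{Eq-T_0} is immediate: for $u=0$ the only term in \eqref{Eq--Thm--GNF} is $s=0$, $\bi=0$, giving $\sigma_0\bA^0 = 1_V$. For \eqref{Eq-T_u} the idea is to split the sum defining $T_u$ according to which matrix $A_\alpha$ appears as the \emph{leftmost} factor of $\bA^\bi$ (respectively the rightmost factor, for the second form). Concretely, I would fix $\alpha$ and describe the bijection between $\mathbb{I}(q,s)$ matrices whose first column has its unique $1$ in row $\alpha$, and $\mathbb{I}(q,s-1)$ matrices: deleting that first column sends $\bi$ to some $\bi'$ with $\|\bi'\| = \|\bi\| - 1$, $|\bi'| = \alpha_\flat(|\bi|)$, and $\bA^\bi = A_\alpha \bA^{\bi'}$.

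First I would isolate the $s=0$ term of \eqref{Eq--Thm--GNF}, which contributes exactly $\sigma_u 1_V$. For $s\geq 1$, every $\bi\in\mathbb{I}(q,s)$ has a well-defined leftmost column, whose unique nonzero entry sits in some row $\alpha$; partitioning over this $\alpha$ and applying the bijection above rewrites
\begin{equation*}
\sum_{s=1}^{|u|}\sum_{\bi\in\mathbb{I}(q,s)} (-1)^{\|\bi\|}\sigma_{u-|\bi|}\bA^\bi
= \sum_\alpha A_\alpha \sum_{s=0}^{|u|-1}\sum_{\bi'\in\mathbb{I}(q,s)} (-1)^{\|\bi'\|+1}\sigma_{u - \alpha^\sharp(|\bi'|)}\bA^{\bi'}.
\end{equation*}
Using $u - \alpha^\sharp(|\bi'|) = \alpha_\flat(u) - |\bi'|$ and noting that the inner sum runs up to $|\alpha_\flat(u)| = |u|-1$, the bracketed expression is precisely $-T_{\alpha_\flat(u)}$ by \eqref{Eq--Thm--GNF} (and when $u_\alpha = 0$ the substitution $\alpha_\flat(u)$ produces $\sigma$'s with a negative index, all zero, consistent with the convention; one should note this does not cause trouble because such $\alpha$ contributes nothing). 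This yields the first form of \eqref{Eq-T_u}. The second form is obtained identically by partitioning instead over the \emph{rightmost} column of $\bi$ and peeling off the last factor $A_\alpha$ on the right.

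The main obstacle I anticipate is purely bookkeeping: making the bijection $\mathbb{I}(q,s)\cap\{\text{leftmost column in row }\alpha\} \leftrightarrow \mathbb{I}(q,s-1)$ precise, checking that it respects the defining conditions (1)--(3) of $\mathbb{I}(q,s)$, and tracking the index shifts $\|\bi\|\mapsto\|\bi\|-1$ and $|\bi|\mapsto|\bi|-e_\alpha$ together with the sign $(-1)^{\|\bi\|}$. An alternative, perhaps cleaner route that avoids this combinatorics entirely is to mimic the uniqueness argument: one can verify \eqref{Eq-T_u} by showing both sides satisfy \eqref{Eq_GNT_sigma}-type identities, or more directly by working at the level of generating functions — multiply the claimed identity by $t^u$, sum over $u$, and recognize it as the statement $(1_V + t\bA)\bigl(\sum_u \tilde T_u t^u\bigr) = \bigl(\sum_u \sigma_u t^u\bigr)1_V$ for $\tilde T_u$ the formula \eqref{Eq--Thm--GNF}, which in turn follows from Lemma \ref{lemma--1} since $\sum_u \tilde T_u t^u = P_{\bA}(t)(1_V + t\bA)^{-1}$. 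I would likely present the generating-function version as the primary proof, as it makes the two forms of \eqref{Eq-T_u} transparent (left versus right multiplication by $(1_V+t\bA)$) and sidesteps the index juggling.
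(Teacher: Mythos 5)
Your proposal is correct and, in its combinatorial form, is essentially the paper's own proof: the paper splits off the $s=0$ term of \eqref{Eq--Thm--GNF} and partitions $\mathbb{I}(q,s)$ by the row of the $1$ in the first column (its map $\beta\circ\bi$ is exactly your ``prepend/delete the leftmost column'' bijection), with the second identity in \eqref{Eq-T_u} handled analogously via the last column. Your generating-function variant, identifying $\sum_u T_u t^u = P_{\bA}(t)(1_V+t\bA)^{-1}$ via Lemma \ref{lemma--1} and comparing coefficients after left or right multiplication by $1_V+t\bA$, is just an equivalent repackaging of the same computation and is also fine.
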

\begin{proof}
Equality \eqref{Eq-T_0} is obvious. Assume $u\in \mathbb N (q)$ and $|u|=s+1$, $s\geq 0$.  We show the first identity of \eqref{Eq-T_u}. A proof of the second one is analogous. 

For every matrix $\bi = (i^{\alpha}_l) \in \mathbb{I}(q,s)$ and every $\beta$ define a matrix $\beta \circ \bi \in \mathbb{I}(q,s+1)$ by 
\begin{equation*} 
(\beta \circ \bi)^{\alpha}_1 = 
\delta^{\alpha}_\beta,\quad (\beta \circ \bi)^{\alpha}_l = i^{\alpha}_{l-1},\quad 2\leq l\leq s+1.
\end{equation*}
It is easy to observe that $\mathbb{I}(q,s+1)$ can be expressed as the following sum of pairwise disjoint sets 
\begin{equation*}
\mathbb{I}(q,s+1) = \bigcup_{\beta} \beta\circ \mathbb{I}(q,s).
\end{equation*}
Moreover, for every $\bi \in \mathbb{I}(q,s)$, $|\beta \circ \bi| = \beta^\sharp (|\bi|)$ and
$A^{\beta\circ \bi} = A_\beta A^\bi$. We have 
\begin{align*}
T_u &= \sum_{l=0}^{s+1} \sum_{\bi \in \mathbb{I}(q,l)} (-1)^{\|\bi\|} \sigma_{u-|\bi|} A^\bi\\
&= \sigma_u 1_V + \sum_{l=1}^{s+1} \sum_{\beta} \sum _{\bi \in \mathbb{I}(q,l-1)} 
(-1)^{\|\beta \circ \bi\|} \sigma_{u-|\beta \circ \bi|} A^{\beta \circ \bi}\\
&= \sigma_u 1_V+ \sum_{l=0}^s \sum_{\beta} \sum _{\bi \in \mathbb{I}(q,l)} (-1)^{1+ \|\bi\|} 
\sigma_{\beta_\flat (u) - |i|} A_\beta A^{\bi}\\
&= \sigma_u 1_V- \sum_{\beta} A_\beta\left( \sum_{l=0}^s \sum _{\bi \in \mathbb{I}(q,l)}
\sigma_{\beta_\flat (u) - |i|} A^{\bi}\right)\\
&= \sigma_u 1_V- \sum_{\beta} A_\beta T_{\beta_\flat (u)},
\end{align*}
for $s= |\beta_\flat(u)|$.
\end{proof}

Now we state and prove the fundamental properties of generalized Newton transformation.

\begin{proposition}\label{Prop-GNT}
The generalized Newton transformation $T=(T_u:u\in\mathbb{N}(q))$ of $\bA$ satisfies the following conditions:
\begin{enumerate}
\item[(GN1)] Symmetric functions $\sigma_u$ are given by the formula
\begin{equation*}
|u|\sigma_u=\sum_{\alpha}\tr(A_{\alpha}T_{\alpha_\flat(u)}).
\end{equation*}
\item[(GN2)] The trace of $T_u$ equals
\begin{equation*}
\tr T_u=(p-|u|)\sigma_u.
\end{equation*}
\item[(GN3)] Symmetric functions $\sigma_u$ satisfy the following recurrence relation
\begin{equation*}
\sum_{\alpha,\beta}\tr(A_{\alpha}A_{\beta}T_{\beta_\flat\alpha_\flat(u)})=-|u|\sigma_u
+\sum_{\alpha}(\tr A_{\alpha})\sigma_{\alpha_\flat(u)}. 
\end{equation*}
\end{enumerate}
\end{proposition}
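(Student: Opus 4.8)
The plan is to derive all three identities as formal consequences of the defining property \eqref{Eq_GNT_sigma} of the generalized Newton transformation together with the recurrences \eqref{Eq-T_0}--\eqref{Eq-T_u} of Theorem~\ref{Thm-Formula-GNT}; no new machinery is required.

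First I would prove (GN1) by feeding a well-chosen curve into the definition. Take the scaling curve $\bA(\tau)=((1+\tau)A_1,\ldots,(1+\tau)A_q)$, so that $\frac{d}{d\tau}A_\alpha(\tau)_{\tau=0}=A_\alpha$ for every $\alpha$. Because $P_{\bA(\tau)}(t)=\det(1_V+(1+\tau)t\bA)=P_{\bA}((1+\tau)t)$, comparing the coefficients of $t^u$ on the two sides shows that $\sigma_u(\tau)=(1+\tau)^{|u|}\sigma_u$, whence $\frac{d}{d\tau}\sigma_u(\tau)_{\tau=0}=|u|\sigma_u$. Substituting both derivatives into \eqref{Eq_GNT_sigma} then yields $|u|\sigma_u=\sum_\alpha\tr(A_\alpha T_{\alpha_\flat(u)})$, which is (GN1). (Equivalently, one can view this as Euler's relation for the polynomial $\sigma_u$, which is jointly homogeneous of degree $|u|$ in $\bA$, combined with \eqref{Eq_GNT_sigma} applied to the curve $\bA+\tau\bA$.)

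Next, (GN2) would follow by taking the trace of the first form of \eqref{Eq-T_u}, which gives directly $\tr T_u=p\,\sigma_u-\sum_\alpha\tr(A_\alpha T_{\alpha_\flat(u)})$; substituting (GN1) into the right-hand side turns this into $\tr T_u=p\,\sigma_u-|u|\sigma_u=(p-|u|)\sigma_u$, the case $|u|=0$ being just $\tr 1_V=p$ (with $\sigma_{(0,\ldots,0)}=1$). For (GN3) I would apply \eqref{Eq-T_u} to the multi-index $\alpha_\flat(u)$ and rewrite it as $\sum_\beta A_\beta T_{\beta_\flat\alpha_\flat(u)}=\sigma_{\alpha_\flat(u)}1_V-T_{\alpha_\flat(u)}$, then compose on the left with $A_\alpha$, take traces, and sum over $\alpha$, obtaining $\sum_{\alpha,\beta}\tr(A_\alpha A_\beta T_{\beta_\flat\alpha_\flat(u)})=\sum_\alpha(\tr A_\alpha)\sigma_{\alpha_\flat(u)}-\sum_\alpha\tr(A_\alpha T_{\alpha_\flat(u)})$; a second use of (GN1) replaces the last sum by $|u|\sigma_u$, which is precisely (GN3).

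I do not expect a genuine obstacle here: the argument is formal manipulation of the recurrences, and the only real idea is to recognize that the scaling curve $\tau\mapsto(1+\tau)\bA$ is the one for which \eqref{Eq_GNT_sigma} linearizes all the $\sigma_u$ simultaneously. The points requiring a little care are purely bookkeeping — in (GN3) the convention that $\sigma$, hence $T$, vanishes on multi-indices with a negative entry, so that the terms with $u_\alpha=0$ drop out on both sides, and the commutativity $\beta_\flat\alpha_\flat(u)=\alpha_\flat\beta_\flat(u)$ used to match the indexing in the statement.
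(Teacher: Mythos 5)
Your proposal is correct and follows essentially the same route as the paper: the same scaling curve $(1+\tau)\bA$ fed into \eqref{Eq_GNT_sigma} for (GN1), the trace of the recurrence \eqref{Eq-T_u} combined with (GN1) for (GN2), and one further iteration of the recurrence followed by taking traces for (GN3). The only (immaterial) difference is that in (GN3) the paper substitutes the recurrence into itself and invokes (GN2), while you compose the recurrence at level $\alpha_\flat(u)$ with $A_\alpha$ and invoke (GN1) directly; your bookkeeping remark about multi-indices with a negative entry matches the paper's implicit convention.
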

\begin{proof}
{\it Proof of} (GN1). Consider a curve $\bA(\tau)=(1+\tau)\bA$. Then $\bA(0)=\bA$ and $\frac{d}{d\tau}\bA(\tau)_{\tau=0}=\bA$. Expanding the polynomial $P_{\bA(\tau)}(t)$ it is easy to see that $\sigma_u(\tau)=(1+\tau)^{|u|}\sigma_u$, where $\sigma_u$ is a symmetric function of $\bA$. Thus
\begin{equation*}
\frac{d}{d\tau}\sigma_u(\tau)_{\tau=0}=|u|\sigma_u.
\end{equation*}  
Hence by \eqref{Eq_GNT_sigma}
\begin{equation*}
|u|\sigma_u=\sum_{\alpha}\tr(A_{\alpha}T_{\alpha_\flat(u)}).
\end{equation*}

{\it Proof of} (GN2). Follows directly by \eqref{Eq-T_u} and (GN1).

{\it Proof of} (GN3). By \eqref{Eq-T_u} we have
\begin{equation*}
T_{\beta_\flat(u)}=\sigma_{\beta_\flat(u)}1_V-\sum_{\alpha}A_{\alpha}T_{\beta_\flat\alpha_\flat(u)}.
\end{equation*}
Therefore
\begin{align*}
T_u &=\sigma_u 1_V-\sum_{\beta}\left( \sigma_{\beta_\flat(u)}1_V-\sum_{\alpha}A_{\alpha}T_{\beta_\flat\alpha_\flat(u)} \right)\\
&=\sigma_u 1_V-\sum_{\beta}\sigma_{\beta_\flat(u)}A_{\beta}+
\sum_{\alpha,\beta}A_{\beta}A_{\alpha}T_{\beta_\flat\alpha_\flat(u)}.
\end{align*}
Taking the trace, using (GN2) and the fact that $\alpha_\flat$ and $\beta_\flat$ commute we get (GN3).
\end{proof}

\begin{cor}\label{Cor-adj-T_u}
Let $T = (T_u: u\in \mathbb N (q))$ be the generalized Newton transformation of $\bA$.
If every matrix $A_{\alpha}$ is self--adjoint, i.e. $A_{\alpha}^{\top}= A_{\alpha}$, then $T_u$ is self--adjoint for every $u\in \mathbb{N}(q)$. 
\end{cor}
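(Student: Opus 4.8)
The plan is to prove the corollary by induction on the length $|u|$ of the multi--index, using the recurrence relation \eqref{Eq-T_u} from Theorem \ref{Thm-Formula-GNT}. The base case is immediate: for $u = (0,\ldots,0)$ we have $T_0 = 1_V$, which is obviously self--adjoint.

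For the inductive step, suppose $|u| \geq 1$ and assume that $T_v$ is self--adjoint for every multi--index $v$ with $|v| < |u|$. In particular $T_{\alpha_\flat(u)}$ is self--adjoint for every $\alpha$, since $|\alpha_\flat(u)| = |u| - 1$ (when $u_\alpha \geq 1$; when $u_\alpha = 0$ the convention makes $\sigma_{\alpha_\flat(u)}$ and the corresponding term vanish, so that summand causes no trouble). Now I would take the adjoint of the first expression for $T_u$ in \eqref{Eq-T_u}. Using that the adjoint is linear, that $(AB)^\top = B^\top A^\top$, that each $A_\alpha^\top = A_\alpha$ by hypothesis, and that $\sigma_u 1_V$ is self--adjoint, one computes
\begin{equation*}
T_u^\top = \sigma_u 1_V - \sum_{\alpha} \bigl(A_\alpha T_{\alpha_\flat(u)}\bigr)^\top = \sigma_u 1_V - \sum_{\alpha} T_{\alpha_\flat(u)}^\top A_\alpha^\top = \sigma_u 1_V - \sum_{\alpha} T_{\alpha_\flat(u)} A_\alpha.
\end{equation*}
By the inductive hypothesis $T_{\alpha_\flat(u)}^\top = T_{\alpha_\flat(u)}$, so the right--hand side is exactly the \emph{second} expression for $T_u$ given in \eqref{Eq-T_u}. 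Since both expressions in \eqref{Eq-T_u} equal $T_u$, we conclude $T_u^\top = T_u$, completing the induction.

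**The only mild subtlety**, and the step I would be most careful about, is the role of the two equivalent forms of the recurrence in \eqref{Eq-T_u}: the proof works precisely because taking the adjoint interchanges the left and right placement of the $A_\alpha$'s, and Theorem \ref{Thm-Formula-GNT} guarantees these two placements yield the same operator. No genuine obstacle is expected; the argument is a short, clean induction once one observes this symmetry. (An alternative, equally short route is to take adjoints directly in the closed formula \eqref{Eq--Thm--GNF}: since $(\bA^{\bi})^\top = A_q^{i^q_s}\cdots A_1^{i^1_1}$ is again of the form $\bA^{\bj}$ for a suitable $\bj \in \mathbb{I}(q,s)$ when all $A_\alpha$ are self--adjoint, and $\sigma_{u-|\bi|}$ is a scalar, the sum is visibly invariant under $\top$; but the inductive argument via \eqref{Eq-T_u} is cleanest to write out.)
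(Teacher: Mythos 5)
Your proof is correct and is essentially the same as the paper's: an induction on $|u|$ that takes the adjoint in the recurrence \eqref{Eq-T_u}, uses the self--adjointness of the $A_\alpha$'s and the inductive hypothesis, and identifies the result with the second form of the recurrence. Your extra remark about the degenerate summands when $u_\alpha=0$ is a harmless refinement not spelled out in the paper.
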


\begin{proof} We apply induction with respect to $k= |u|$. Obviously,  if $k=0$, then $u=0$ and $T_0$ is self--adjoint. Take $k\geq 1$ and assume $T_w$ is self--adjoint for every multi--index $w$ of length $|w|=k-1$. Let $u$ be of length $k$.  Applying \eqref{Eq-T_u} we get
\begin{equation*}
T^{\top}_u = \sigma_u 1_V - \sum_{\alpha}  T^{\top}_{\alpha_\flat (u)} A^{\top}_\alpha
= \sigma_u 1_V - \sum_{\alpha}  T_{\alpha_\flat (u)} A_\alpha
= T_u.
\end{equation*}
Induction completes the proof.
\end{proof}

\section{Extrinsic curvatures for distributions of arbitrary codimension}

Let $(M,g)$ be an oriented Riemannian manifold, $D$ a $p$--dimensional (transversally oriented) distribution on $M$. Let $q$ denotes the codimension of $D$. For each $X\in T_xM$ there is unique decomposition
\begin{equation*}
X=X^{\top}+X^{\bot},
\end{equation*}
where $X^{\top}\in D_x$ and $X^{\bot}$ is orthogonal to $D_x$. Denote by $D^{\bot}$ the bundle of vectors orthogonal to $D$. Let $\nabla$ be the Levi--Civita connection of $g$. $\nabla$ induces connections $\nabla^{\top}$ and $\nabla^{\bot}$ in vector bundles $D$ and $D^{\bot}$ over $M$, respectively. 

Let $\pi:P\to M$ be the principal bundle of orthonormal frames (oriented orthonormal frames, respectively) of $D^{\bot}$. Clearly, the structure group $G$ of this bundle is $G=O(q)$ ($G=SO(q)$, respectively). We define a Riemannian metric on $P$ by inducing the metric from $M$ and an invariant inner product $\dbla\,,\,\dbra$ on the Lie algebra $\mathfrak{g}$ of $G$, 
\begin{equation*}
\dbla A,B\dbra =-\tr (AB),\quad A,B\in \mathfrak{g}.
\end{equation*}
In particular, the projection $\pi:P\to M$ is a Riemannian submersion. 

Adopt the notation from the Appendix. 

Every element $(x,e)=(e_1,\ldots,e_q)\in P_x$, $x\in M$, induces the system of endomorphisms $\bA(x,e)=(A_1(x,e),\ldots,A_q(x,e))$ of $D_x$, where $A_{\alpha}(x,e)$ is the shape operator corresponding to $(x,e)$, i.e.
\begin{equation*}
A_{\alpha}(x,e)(X)=-\left(\nabla_X e_{\alpha}\right)^{\top},\quad X\in D_x.
\end{equation*}  
Let $T(x,e)=(T_u(x,e))_{u\in\mathbb{N}(q)}$ be the generalized Newton transformation associated with $\bA(x,e)$. 

The bundle $\pi:P\to M$ and the vector bundles $TM\to M$, $D\to M$, $D^{\bot}\to M$ induce the pull--back bundles 
\begin{equation*}
E=\pi^{-1}TM,\quad E'=\pi^{-1}D\quad\textrm{and}\quad E''=\pi^{-1}D^{\bot}\quad\textrm{over $P$}, 
\end{equation*}
each with a fiber  $(\pi^{-1}TM)_{(x,e)}=T_xM$, $(\pi^{-1}D)_{(x,e)}=D_x$ and  $(\pi^{-1}D^{\bot})_{(x,e)}=D^{\bot}_x$, respectively. We have
\begin{equation*}
E=E'\oplus E''.
\end{equation*} 
Moreover, the connections $\nabla,\nabla^{\top},\nabla^{\bot}$ of $g$ induce the unique connections $\nabla^E,\nabla^{E'}$ and $\nabla^{E''}$ in $E$, $E'$ and $E''$, respectively (see Appendix).  

\begin{proposition}\label{Prop-EtoEbis}
\begin{enumerate}
\item Let $Y\in\Gamma(E'')$ and $X\in D_x$, $x\in M$. Then, for every $w\in P_x$
\begin{equation*}
\left(\nabla^E_{X^h}Y\right)(w)=\left(\nabla^{E''}_{X^h}Y\right)(w)-A_{Y(w)}(X),
\end{equation*}
where $A_N(X)=-(\nabla_X N)^{\top}$ denotes the shape operator.
\item Let $Y\in\Gamma(E')$ and $X\in D^{\bot}_x$, $x\in M$. Then, for $w\in P_x$
\begin{equation*}
\left(\nabla^E_{X^h}Y\right)(w)=\left(\nabla^{E'}_{X^h}Y\right)(w)
+\left(\nabla_XY\right)^{\bot}(x).
\end{equation*}
\end{enumerate}
\end{proposition}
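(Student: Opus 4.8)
The plan is to unwind the definition of the induced connection $\nabla^E$ on the pull-back bundle $E=\pi^{-1}TM$ in terms of the given data, and compare it with the decomposition $E=E'\oplus E''$. Recall from the appendix that a section $Y\in\Gamma(E)$ over $P$ can be differentiated in the direction of the horizontal lift $X^h$ of $X\in T_xM$, and that $\nabla^E$ is characterized by the property that for a section of the form $Y=s\circ\pi$ with $s\in\Gamma(TM)$ one has $\nabla^E_{X^h}(s\circ\pi)=(\nabla_X s)\circ\pi$ at points over $x$; more generally, for a general section the formula involves the frame $(e_\alpha)$ at the point $w=(x,e)$. The point is that $\nabla^E$ restricted to $E''=\pi^{-1}D^\perp$ is \emph{not} the same as $\nabla^{E''}$, because $\nabla^{E''}$ was induced from $\nabla^\perp=(\nabla(\cdot))^\perp$ rather than from $\nabla$ itself; the discrepancy is exactly the second fundamental form, i.e. the shape operator.

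For part (1): fix $w=(x,e)\in P_x$ and $X\in D_x$. Extend $Y\in\Gamma(E'')$ and pick a local section; by linearity over functions and the Leibniz rule it suffices to check the identity on sections of the form $Y=(N\circ\pi)\,f$ where $N\in\Gamma(D^\perp)$ near $x$ and $f\in C^\infty(P)$, and even (after subtracting off the tangential Leibniz terms, which agree on both sides since $X^h$ projects to $X$) on $Y=N\circ\pi$ itself. For such $Y$ we have $\nabla^E_{X^h}(N\circ\pi)(w)=(\nabla_X N)(x)$ by the defining property of the pull-back connection, and similarly $\nabla^{E''}_{X^h}(N\circ\pi)(w)=(\nabla^\perp_X N)(x)=(\nabla_X N)^\perp(x)$. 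Subtracting, $\nabla^E_{X^h}Y(w)-\nabla^{E''}_{X^h}Y(w)=(\nabla_X N)^\top(x)=-A_{N(x)}(X)=-A_{Y(w)}(X)$, which is the claim. One must check that the right-hand side $-A_{Y(w)}(X)$ is genuinely tensorial in $Y$ (it depends only on the value $Y(w)\in D^\perp_x$, not on the extension), which is standard for the shape operator, and that the Leibniz correction terms for the function factor $f$ cancel because $X^h(f\cdot)$ produces the same extra term $X^h(f)\,N(x)$ on both sides (this lands in $E''$, consistent with both $\nabla^E$ and $\nabla^{E''}$ having the same "first-order" part along the horizontal distribution).

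Part (2) is entirely parallel, with the roles of $D$ and $D^\perp$ interchanged: for $Y\in\Gamma(E')$ and $X\in D^\perp_x$, reduce to $Y=Z\circ\pi$ with $Z\in\Gamma(D)$, use $\nabla^E_{X^h}(Z\circ\pi)(w)=(\nabla_X Z)(x)$ and $\nabla^{E'}_{X^h}(Z\circ\pi)(w)=(\nabla^\top_X Z)(x)=(\nabla_X Z)^\top(x)$, and subtract to get $(\nabla_X Z)^\perp(x)$, which is the asserted correction term. I expect the only real subtlety — hence the main obstacle — to be bookkeeping about the pull-back connection: namely justifying carefully that $\nabla^E_{X^h}(s\circ\pi)=(\nabla_X s)\circ\pi$ and that it is legitimate to reduce a general section of $E'$ or $E''$ to one pulled back from the base plus function multiples, together with verifying the tensoriality of both sides in $Y$. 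All of this is routine once the appendix's description of connections on pull-back bundles via horizontal lifts is in hand, so the proof is short; the substance is simply the observation that passing from $\nabla$ to its $D$- or $D^\perp$-component drops precisely the second fundamental form term.
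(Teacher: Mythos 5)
Your argument is correct and follows essentially the same route as the paper: both exploit that the difference $\nabla^E_{X^h}Y-\nabla^{E''}_{X^h}Y$ (resp.\ $\nabla^E_{X^h}Y-\nabla^{E'}_{X^h}Y$) is tensorial in $Y$, reduce to pulled-back sections $Y=N\circ\pi$, and then read off the tangential (resp.\ normal) part of $\nabla_XN$ as the shape-operator correction. The extra remarks on tensoriality and the cancellation of Leibniz terms simply make explicit what the paper leaves implicit.
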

\begin{proof}
Since $\nabla^E$ and $\nabla^{E''}$ are connections, the operator 
\begin{equation*}
S(X,Y)=\left(\nabla^E_{X^h}Y\right)(w)-\left(\nabla^{E''}_{X^h}Y\right)(w)
\end{equation*}
is tensorial, hence does not depend on the extension of a vector $Y(w)$ to a section of $E$. Thus, we may assume $Y=Y_0\circ\pi$, where $Y_0\in\Gamma(TM)$. Then
\begin{equation*}
S(X,Y)=\left(\nabla_XY_0\right)(x)-\left(\nabla^{\bot}_XY_0\right)(x)=\left(\nabla_XY_0\right)^{\top}(x)
=-A_{Y_0(x)}(X)=-A_{Y(w)}(X),
\end{equation*}
which completes the proof of (1). The proof of (2) is similar.
\end{proof}

Let ${\rm End}(E')$ denotes the bundle of endomorphisms of $E'$, i.e. the fiber ${\rm End}(E')_x$ over $x\in M$ is ${\rm End}(D_x)$. The connection $\nabla^{E'}$ defines a connection in ${\rm End}(E')$.

Each $T_u$ belongs to $\Gamma({\rm End}(E'))$ and $\sigma_u$ is a smooth function on $P$. By the definition of generalized Newton transformation we conclude that
\begin{equation}\label{Eq-diff-sigma}
Z(\sigma_u)=\sum_{\alpha}\tr\left(\left(\nabla^{E'}_ZA\right)_{\alpha}\cdot T_{\alpha_\flat(u)}\right),\quad Z\in\Gamma(P),
\end{equation}
where
\begin{equation*}
\left(\nabla^{E'}_ZA\right)_{\alpha}X=\nabla^{E'}_Z(A_{\alpha}(X))-A_{\alpha}(\nabla^{E'}_ZX)-A_{\nabla^{E''}_Ze_{\alpha}}(X),\quad X\in E'.
\end{equation*}

Applying the notation from Appendix, we have
\begin{equation}\label{eq:sigma_u}
\widehat{\sigma_u}(x)=\int_{P_x}\sigma_u(x,e)\, de=\int_G\sigma_u(x,e_0a)\, da,
\end{equation}
where $(x,e_0)$ is a fixed element of $P_x$. We call $\widehat{\sigma_u}$'s {\it extrinsic curvatures} of a distribution $D$. Moreover, we define {\it total extrinsic curvatures}
\begin{equation}\label{eq:totalsigma_u}
\sigma_u^M=\int_M \widehat{\sigma_u}(x)\, dx.
\end{equation}
Since the projection $\pi$ in the bundle $P$ is a Riemannian submersion, then by Fubini theorem \eqref{Fubini}
\begin{equation*}
\sigma_u^M=\int_P \sigma_u(x,e)\, d(x,e).
\end{equation*} 

\begin{rmk}\label{rmk:sigma0}
Notice that some of total extrinsic curvatures are equal zero. For indices $1\leq\alpha_1<\ldots<\alpha_k\leq q$, let $F_{\alpha_1,\ldots,\alpha_k}$ be a transformation which maps vector $e_{\alpha_i}$ of a basis $(x,e)$ to $-e_{\alpha_i}$, $i=1,\ldots,k$. Then, by the use of the characteristic polynomial of generalized Newton transformation, we get
\begin{equation*}
\sigma_u(F_{\alpha_1,\ldots,\alpha_k}(x,e))=(-1)^{u_{\alpha_1}+\ldots+u_{\alpha_k}}\sigma_u(x,e).
\end{equation*}
Hence, if $u_1+\ldots+u_k$ is odd and $F_{\alpha_1,\ldots,\alpha_k}$ maps $P$ to $P$, then $\widehat{\sigma_u}=0$, so $\sigma^M_u=0$. Thus, the following two conditions hold:
\begin{enumerate}
\item if $G=O(q)$ and at least one of indices $u_1,\ldots,u_q$ is odd, then $\sigma^M_u=0$,
\item if $G=SO(q)$ and there is one index odd and one even, then $\sigma^M_u=0$.
\end{enumerate}
\end{rmk}

Define the section $Y_u\in\Gamma(E)$, $u\in\mathbb{N}(q)$ as follows
\begin{equation}\label{Eq-Y_u}
Y_u(x,e)=\sum_{\alpha,\beta}T_{\beta_\flat\alpha_\flat(u)}(x,e)(\nabla_{e_{\alpha}}e_{\beta})^{\top}
+\sum_{\alpha}\sigma_{\alpha_\flat(u)}(x,e)e_{\alpha}.
\end{equation}
Observe that the first component of $Y_u$ is a section of $E'$, whereas the second component is a section of $E''$. The section $Y_u$ and the vector field $\widehat{Y_u}\in\Gamma(TM)$ obtained from $Y_u$ by integration on the fibers of $P$ play a fundamental role in our considerations. 

We will start by proving necessary technical results. The divergence of $Y_u$ and consequences of obtained formula are contained in subsequent sections. 
 
Let $R_{X,Y}:D\to D$, $X,Y\in\Gamma(TM)$ be an endomorphism given by
\begin{equation*}
R_{X,Y}Z=(R(Z,X)Y)^{\top},\quad Z\in D,
\end{equation*}
where $R$ denotes the curvature tensor of $\nabla$. For fixed indices $\alpha,\beta$ we define a section $R_{\alpha,\beta}$ of ${\rm End}(E')$ by
\begin{equation*}
R_{\alpha,\beta}(x,e)=R_{e_{\alpha},e_{\beta}}:D_x\to D_x.
\end{equation*}
Similarly, we define $A_{\alpha}\in \Gamma({\rm End}(E'))$ by
\begin{equation*}
A_{\alpha}(x,e)=A_{e_{\alpha}}:D_x\to D_x.
\end{equation*}

It is also worth to notice, that $e_{\alpha}$ is identified with a section $e_{\alpha}\in \Gamma(E'')$, which assigns to a basis $(x,e)$ its $\alpha$--th component. Hence, a local orthonormal basis $e=(e_1,\ldots,e_q)$ is considered as a local section of $P$.

\begin{lemma}\label{lem-loc}
Fix $x\in M$. Let $e=(e_1,\ldots,e_q)$ be a local orthonormal frame field in the neighborhood of $x$ such that $(\nabla^{\bot}e_{\alpha})(x)=0$ for all $\alpha$. Extend $e$ to a local orthonormal basis $(f_1,\ldots,f_p,e_1,\ldots,e_q)$ of $TM$ such that $(\nabla^{\top}f_i)(x)=0$ for all $i$. Then, at $x$ we have
\begin{align*}
e_{\alpha}((A_{\beta})_{ij}\circ e) &=(A_{\alpha}A_{\beta})_{ij}\circ e+(R_{\alpha,\beta})_{ij}\circ e-
g(\nabla_{f_i}(\nabla_{e_{\alpha}}e_{\beta})^{\top},f_j)\\
&+\sum_{\gamma}g((\nabla_{e_{\alpha}}e_{\gamma})^{\top},f_i)g(f_j,(\nabla_{e_{\gamma}}e_{\beta})^{\top}).
\end{align*}
where $(\,)_{ij}$ denotes the $(i,j)$--th component with respect to the basis $(f_i)$.
\end{lemma}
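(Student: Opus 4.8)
The statement is a local computation at a fixed point $x$, so the plan is to exploit the chosen frame to kill as many connection terms as possible and then carefully expand $e_\alpha\bigl((A_\beta)_{ij}\circ e\bigr)$ using the definition $A_\beta(y,e)(f_i) = -(\nabla_{f_i}e_\beta)^\top$. First I would write, for $y$ near $x$,
\begin{equation*}
(A_\beta)_{ij}\circ e(y) = g\bigl(A_\beta(y,e(y))f_i(y),f_j(y)\bigr) = -g\bigl((\nabla_{f_i}e_\beta)^\top(y),f_j(y)\bigr) = -g\bigl(\nabla_{f_i}e_\beta,f_j\bigr)(y),
\end{equation*}
the last equality because $f_j\in D$. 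Then I would apply $e_\alpha$ and use metricity of $\nabla$:
\begin{equation*}
e_\alpha\bigl((A_\beta)_{ij}\circ e\bigr) = -g\bigl(\nabla_{e_\alpha}\nabla_{f_i}e_\beta,f_j\bigr) - g\bigl(\nabla_{f_i}e_\beta,\nabla_{e_\alpha}f_j\bigr).
\end{equation*}

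\textbf{Key steps.} The plan for the first term is to introduce the curvature tensor and the bracket: $\nabla_{e_\alpha}\nabla_{f_i}e_\beta = R(e_\alpha,f_i)e_\beta + \nabla_{f_i}\nabla_{e_\alpha}e_\beta + \nabla_{[e_\alpha,f_i]}e_\beta$. Pairing with $f_j\in D$, the curvature term contributes $g(R(e_\alpha,f_i)e_\beta,f_j)$, which by the symmetries of $R$ equals $g((R(e_\beta,e_\alpha)f_i)^\top,f_j)$ — this is exactly $(R_{\alpha,\beta})_{ij}\circ e$ at $x$, once one checks the sign and index conventions against the definition $R_{X,Y}Z=(R(Z,X)Y)^\top$. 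For the term $g(\nabla_{f_i}\nabla_{e_\alpha}e_\beta,f_j)$, I split $\nabla_{e_\alpha}e_\beta=(\nabla_{e_\alpha}e_\beta)^\top+(\nabla_{e_\alpha}e_\beta)^\bot$; differentiating the tangential part gives the term $-g\bigl(\nabla_{f_i}(\nabla_{e_\alpha}e_\beta)^\top,f_j\bigr)$ in the statement (note the overall minus sign already present), and differentiating the normal part $N:=(\nabla_{e_\alpha}e_\beta)^\bot$ gives $g(\nabla_{f_i}N,f_j)=g((\nabla_{f_i}N)^\top,f_j)=-g(A_N f_i,f_j)$; writing $N=\sum_\gamma g(N,e_\gamma)e_\gamma=\sum_\gamma g((\nabla_{e_\alpha}e_\beta)^\top\!,\ )\ldots$ — more precisely $g(N,e_\gamma)=g(\nabla_{e_\alpha}e_\beta,e_\gamma)=-g(e_\beta,\nabla_{e_\alpha}e_\gamma)=-g(e_\beta,(\nabla_{e_\alpha}e_\gamma)^\bot)$ — one must combine this with $A_{e_\gamma}f_i$ to produce $(A_\alpha A_\beta)_{ij}\circ e$ together with the sum over $\gamma$ of $g((\nabla_{e_\alpha}e_\gamma)^\top\!,f_i)g(f_j,(\nabla_{e_\gamma}e_\beta)^\top)$; I will reconcile signs here using $g((\nabla_{e_\alpha}e_\gamma)^\bot,e_\delta)$ versus the shape operators. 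The bracket term $g(\nabla_{[e_\alpha,f_i]}e_\beta,f_j)$ and the term $g(\nabla_{f_i}e_\beta,\nabla_{e_\alpha}f_j)$ are where the hypotheses $(\nabla^\bot e_\alpha)(x)=0$ and $(\nabla^\top f_i)(x)=0$ enter: at $x$, $\nabla_{e_\alpha}f_j$ has no tangential part, $\nabla_{f_i}f_j$ has no tangential part, and $\nabla_{e_\alpha}e_\gamma$ has no normal part, so $[e_\alpha,f_i]$ at $x$ reduces to $\nabla_{e_\alpha}f_i-\nabla_{f_i}e_\alpha=(\nabla_{e_\alpha}f_i)^\bot-(\nabla_{f_i}e_\alpha)^\top$, and this will recombine with the leftover pieces; the contribution of $(\nabla_{e_\alpha}f_i)^\bot$ paired through $\nabla e_\beta$ against $f_j$ produces part of $A_\alpha A_\beta$.

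\textbf{Main obstacle.} The conceptual content is light — it is all the product rule, metricity, the curvature identity, and the two frame normalizations — but the real work, and the main place to be careful, is bookkeeping the orthogonal decompositions and the signs: every occurrence of $\nabla_{e_\alpha}e_\beta$, $\nabla_{e_\alpha}f_i$, etc. must be split into tangential and normal parts, the vanishing-at-$x$ hypotheses applied to exactly the right components, and the shape-operator sign convention $A_N X=-(\nabla_X N)^\top$ tracked consistently so that the cross terms assemble into $(A_\alpha A_\beta)_{ij}$ and the single sum over $\gamma$ with the stated signs. The curvature symmetry step — matching $g(R(e_\alpha,f_i)e_\beta,f_j)$ to $(R_{\alpha,\beta})_{ij}\circ e$ given the definition $R_{X,Y}Z=(R(Z,X)Y)^\top$ — should be isolated and checked on its own, since a sign error there would be invisible until the end. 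I expect no genuine difficulty beyond this disciplined expansion.
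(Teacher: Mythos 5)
Your route is sound and, in substance, it is the paper's own computation in dual form: the paper rewrites $(A_\beta)_{ij}\circ e=g(e_\beta,\nabla_{f_i}f_j)$ (by differentiating $g(e_\beta,f_j)=0$) and commutes second derivatives on $f_j$, whereas you commute them on $e_\beta$ directly; both reduce to the Ricci identity plus the two frame normalizations at $x$, and your expansion does close up to the stated formula. However, two concrete points in your bookkeeping are wrong as written and should be repaired before you trust the signs.

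First, the curvature step: the identity you propose, $g(R(e_\alpha,f_i)e_\beta,f_j)=g((R(e_\beta,e_\alpha)f_i)^{\top},f_j)$, is not a symmetry of $R$ and is false in general. You do not need any pair symmetry here: since $R_{\alpha,\beta}f_i=(R(f_i,e_\alpha)e_\beta)^{\top}$, antisymmetry in the first two slots alone gives $-g(R(e_\alpha,f_i)e_\beta,f_j)=g(R(f_i,e_\alpha)e_\beta,f_j)=(R_{\alpha,\beta})_{ij}\circ e$, which is exactly the term you need with the correct sign. Second, your attribution of $(A_\alpha A_\beta)_{ij}$ and of the $\gamma$--sum to the normal part $N=(\nabla_{e_\alpha}e_\beta)^{\bot}$ would come up empty: because $(\nabla^{\bot}e_\beta)(x)=0$ one has $N(x)=0$, so $g(N,e_\gamma)=-g(e_\beta,(\nabla_{e_\alpha}e_\gamma)^{\bot})$ vanishes at $x$, and writing $N=\sum_\gamma g(N,e_\gamma)e_\gamma$ shows $g(\nabla_{f_i}N,f_j)(x)=0$; hence the term $-g(\nabla_{f_i}\nabla_{e_\alpha}e_\beta,f_j)$ contributes only $-g(\nabla_{f_i}(\nabla_{e_\alpha}e_\beta)^{\top},f_j)$ at $x$. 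The two remaining terms of the lemma both come from the bracket term in your Ricci identity: at $x$, $[e_\alpha,f_i]=(\nabla_{e_\alpha}f_i)^{\bot}+A_\alpha f_i$ (using both normalizations), and then $-g(\nabla_{A_\alpha f_i}e_\beta,f_j)=\sum_k g(A_\alpha f_i,f_k)\,g(A_\beta f_k,f_j)=(A_\alpha A_\beta)_{ij}\circ e$, while $-g(\nabla_{(\nabla_{e_\alpha}f_i)^{\bot}}e_\beta,f_j)=\sum_\gamma g((\nabla_{e_\alpha}e_\gamma)^{\top},f_i)\,g(f_j,(\nabla_{e_\gamma}e_\beta)^{\top})$, via $g(\nabla_{e_\alpha}f_i,e_\gamma)=-g(f_i,\nabla_{e_\alpha}e_\gamma)$. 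Together with the vanishing of $-g(\nabla_{f_i}e_\beta,\nabla_{e_\alpha}f_j)$ at $x$, which you identified correctly, these pieces assemble exactly into the claimed formula.
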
 
\begin{proof}
Differentiating $g(e_{\beta},f_j)=0$ twice, we have
\begin{equation*}
0 =g(\nabla_{f_i}\nabla_{e_{\alpha}}e_{\beta},f_j)+g(\nabla_{e_{\alpha}}e_{\beta},\nabla_{f_i}f_j)
+g(\nabla_{f_i}e_{\beta},\nabla_{e_{\alpha}}f_j)+g(e_{\beta},\nabla_{f_i}\nabla_{e_{\alpha}}f_j).
\end{equation*}
Thus, at the point $x$
\begin{equation}\label{lem-loc-e1}
g(\nabla_{f_i}\nabla_{e_{\alpha}}e_{\beta},f_j)=-g(e_{\beta},\nabla_{f_i}\nabla_{e_{\alpha}}f_j).
\end{equation}
Moreover 
\begin{align*}
(A_{\beta})_{ij}\circ e &=-g(\nabla_{f_i}e_{\beta},f_j),\\
(A_{\alpha}A_{\beta})\circ e &=\sum_kg(\nabla_{f_i}e_{\alpha},f_k)g(\nabla_{f_k}e_{\beta},f_j),\\
(R_{\alpha\beta})_{ij}\circ e &=g(R(f_i,e_{\alpha})e_{\beta},f_j).
\end{align*}
Put
\begin{equation*}
L=e_{\alpha}((A_{\beta})_{ij}\circ e)-(A_{\alpha}A_{\beta})_{ij}\circ e-(R_{\alpha,\beta})_{ij}\circ e.
\end{equation*}
Then, at the point $x$
\begin{align*}
L&=e_{\alpha}g(e_{\beta},\nabla_{f_i}f_j)-\sum_kg(\nabla_{f_i}e_{\alpha},f_k)g(\nabla_{f_k}e_{\beta},f_j)
+g(R(f_i,e_{\alpha})f_j,e_{\beta})\\
&=-\sum_kg(\nabla_{f_i}e_{\alpha},f_k)g(\nabla_{f_k}e_{\beta},f_j)+g(\nabla_{f_i}\nabla_{e_{\alpha}}f_j,e_{\beta})
-g(\nabla_{[f_i,e_{\alpha}]}f_j,e_{\beta})\\
&=-\sum_kg(\nabla_{f_i}e_{\alpha},f_k)g(\nabla_{f_k}e_{\beta},f_j)+g(\nabla_{f_i}\nabla_{e_{\alpha}}f_j,e_{\beta})\\
&-\sum_kg([f_i,e_{\alpha}],f_k)g(\nabla_{f_k}f_j,e_{\beta})
-\sum_{\gamma}g([f_i,e_{\alpha}],e_{\gamma})g(\nabla_{e_{\gamma}}f_j,e_{\beta})\\
&=g(\nabla_{f_i}\nabla_{e_{\alpha}}f_j,e_{\beta})
+\sum_{\gamma}g(\nabla_{e_{\alpha}}f_i,e_{\gamma})g(\nabla_{e_{\gamma}}f_j,e_{\beta}).
\end{align*}
Thus by \eqref{lem-loc-e1}
\begin{align*}
L &=-g(\nabla_{f_i}\nabla_{e_{\alpha}}e_{\beta},f_j)
+\sum_{\gamma}g(\nabla_{e_{\alpha}}f_i,e_{\gamma})g(\nabla_{e_{\gamma}}f_j,e_{\beta})\\
&=-g(\nabla_{f_i}\nabla^{\top}_{e_{\alpha}}e_{\beta},f_j)
+\sum_{\gamma}g(f_i,\nabla^{\top}_{e_{\alpha}}e_{\gamma})g(f_j,\nabla^{\top}_{e_{\gamma}}e_{\beta}),
\end{align*}
which completes the proof.
\end{proof}

\section{Integral formulas}

Adopt the notation from the previous section and from Appendix. The main result of this section is the integral formula for the total extrinsic curvatures. This formula is derived by computation of a divergence of vector field $\widehat{Y_u}$ corresponding to the section $Y_u$ defined by \eqref{Eq-Y_u}.

First, we recall that the divergence of a $(1,m)$--tensor $S$ on $M$ is a $(1,m-1)$--tensor ${\rm div}S$ of the form
\begin{equation*}
({\rm div}S)(X_1,\ldots,X_{m-1})=\sum_{\mu=1}^n(\nabla_{e_{\mu}}S)(e_{\mu},X_1,\ldots,X_{m-1}),
\end{equation*} 
where $(e_{\mu})$ is any local orthonormal basis of $TM$. Considering only the basis adapted to a distribution $D$, we define analogously the divergence ${\rm div}^{\top}S$ with respect to $D$. 

For a linear map $S:D_x\to D_x$ let $S^{\ast}:D_x\to D_x$ denotes the operator adjoint to $S$, i.e., $g(SX,Y)=g(X,S^{\ast}Y)$ for $X,Y\in D_x$.
Let $S:D\to D$ be a tensor field on $M$. Then one can prove the following relations
\begin{align}
&\nabla^{\top}_XS^{\ast}=\left(\nabla^{\top}_X S\right)^{\ast},\label{div-ast1} \\
&g({\rm div}^{\top}S^{\ast},Y)={\rm div}^{\top}(SY)-\sum_i g(S^{\ast}f_i,\nabla^{\top}_{f_i}Y),\label{div-ast2}
\end{align}
where $(f_i)$ is local orthonormal basis of $D$, $X\in TM$, $Y\in D$.

We will use the following notation: if $X\in\Gamma(TM)$ and $Z\in \Gamma(E')$ then $g(X,Z)$ denotes the function on $P$ of the form $g(X,Z)(x,e)=g(X_x,Z(x,e))$, where $(x,e)\in P$.

\begin{lemma}\label{Lem-divY_u}
The divergence of $Y_u$ is given by the formula
\begin{multline*}
{\rm div}_EY_u =-|u|\sigma_u+\sum_{\alpha,\beta}\Big[\tr(R_{\alpha,\beta}T_{\beta_\flat\alpha_\flat(u)})+g({\rm div}_{E'}T^{\ast}_{\beta_\flat\alpha_\flat(u)},(\nabla_{e_{\alpha}}e_{\beta})^{\top})\\
-g(H_{D^{\bot}},T_{\beta_\flat\alpha_\flat(u)}(\nabla_{e_{\alpha}}e_{\beta})^{\top})
+\sum_{\gamma}g((\nabla_{e_{\alpha}}e_{\gamma})^{\top},T_{\beta_\flat\alpha_\flat(u)}(\nabla_{e_{\gamma}}e_{\beta})^{\top})
\Big],
\end{multline*}
where $H_{D^{\bot}}$ denotes the mean curvature vector of the distribution $D^{\bot}$.
\end{lemma}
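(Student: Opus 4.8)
The strategy is to compute $\mathrm{div}_E Y_u$ directly at a point $x\in M$ by choosing a convenient local frame, differentiating the explicit formula \eqref{Eq-Y_u} for $Y_u$, and then handling the $E'$-part and the $E''$-part of $Y_u$ separately with the help of Proposition \ref{Prop-EtoEbis}. The key technical ingredient is Lemma \ref{lem-loc}, which expresses $e_\alpha\big((A_\beta)_{ij}\circ e\big)$ in terms of $A_\alpha A_\beta$, the curvature $R_{\alpha,\beta}$, a Hessian-type term $g(\nabla_{f_i}(\nabla_{e_\alpha}e_\beta)^\top,f_j)$ and a quadratic term in the second fundamental form; this is exactly what is needed to differentiate the $T_{\beta_\flat\alpha_\flat(u)}$ and $\sigma_{\alpha_\flat(u)}$ appearing in $Y_u$.

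First I would fix $x\in M$ and take the adapted frame of Lemma \ref{lem-loc}: a local orthonormal frame $e=(e_1,\dots,e_q)$ of $D^\perp$ with $(\nabla^\perp e_\alpha)(x)=0$, completed to an orthonormal frame $(f_1,\dots,f_p,e_1,\dots,e_q)$ of $TM$ with $(\nabla^\top f_i)(x)=0$. Viewing $e$ as a local section of $P$, one has $\mathrm{div}_E Y_u(x)=\sum_i g(\nabla^E_{f_i}Y_u,f_i)+\sum_\alpha g(\nabla^E_{e_\alpha}Y_u,e_\alpha)$, where the covariant derivatives of $Y_u$ along $f_i$ and $e_\alpha$ are computed by composing with the section $e$. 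Now split $Y_u=Y_u'+Y_u''$ according to \eqref{Eq-Y_u}, with $Y_u'\in\Gamma(E')$ the $T$-term and $Y_u''\in\Gamma(E'')$ the $\sigma$-term. For $Y_u''=\sum_\alpha\sigma_{\alpha_\flat(u)}e_\alpha$ we apply part (1) of Proposition \ref{Prop-EtoEbis} along $f_i$ (turning $\nabla^E$ into $\nabla^{E''}$ minus a shape-operator term, which contributes nothing to $g(\cdot,f_i)$ since it lands in $D$) and the analogue along $e_\alpha$; combined with $\nabla^\perp e_\alpha(x)=0$ this produces the terms $\sum_\alpha e_\alpha(\sigma_{\alpha_\flat(u)})$ plus a term $-g(H_{D^\perp},\cdot)$-type contraction once we account for the $E''$-divergence of the $e_\alpha$'s. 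For $Y_u'$, part (2) of Proposition \ref{Prop-EtoEbis} is used along the $e_\alpha$ directions to convert $\nabla^E$ to $\nabla^{E'}$ plus the $(\nabla_{e_\alpha}\cdot)^\perp$ correction, and along the $f_i$ directions $\nabla^E$ and $\nabla^{E'}$ agree on $E'$; differentiating $T_{\beta_\flat\alpha_\flat(u)}(\nabla_{e_\alpha}e_\beta)^\top$ then yields a $\mathrm{div}_{E'}T^*$-term (after using \eqref{div-ast2} to move the divergence onto $T^*$) together with the Hessian and quadratic terms supplied by Lemma \ref{lem-loc}.

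Next I would assemble the pieces. The function-differentiation identity \eqref{Eq-diff-sigma} gives $e_\alpha(\sigma_{\alpha_\flat(u)})=\sum_\beta\tr\big((\nabla^{E'}_{e_\alpha}A_\beta)T_{\beta_\flat\alpha_\flat(u)}\big)$, and Lemma \ref{lem-loc} rewrites $(\nabla^{E'}_{e_\alpha}A_\beta)_{ij}=e_\alpha((A_\beta)_{ij}\circ e)-(A_\alpha A_\beta)_{ij}$ as $(R_{\alpha,\beta})_{ij}$ minus the Hessian term plus the quadratic term; tracing against $T_{\beta_\flat\alpha_\flat(u)}$ produces $\tr(R_{\alpha,\beta}T_{\beta_\flat\alpha_\flat(u)})$, the stated $\gamma$-sum $\sum_\gamma g((\nabla_{e_\alpha}e_\gamma)^\top,T_{\beta_\flat\alpha_\flat(u)}(\nabla_{e_\gamma}e_\beta)^\top)$, and a Hessian term that must cancel against the one coming from differentiating $Y_u'$. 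The $-|u|\sigma_u$ term should emerge by reorganizing the sum $\sum_{\alpha,\beta}\tr(A_\alpha A_\beta T_{\beta_\flat\alpha_\flat(u)})$ via the recurrence (GN3) of Proposition \ref{Prop-GNT}, which relates it to $-|u|\sigma_u+\sum_\alpha(\tr A_\alpha)\sigma_{\alpha_\flat(u)}$; the $(\tr A_\alpha)\sigma_{\alpha_\flat(u)}$ piece is precisely $g(H_{D^\perp},\cdot)$-type up to sign and combines with the $E''$-part to give the $-g(H_{D^\perp},T_{\beta_\flat\alpha_\flat(u)}(\nabla_{e_\alpha}e_\beta)^\top)$ term.

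\textbf{Main obstacle.} The delicate point is the bookkeeping of the Hessian-type terms $g(\nabla_{f_i}(\nabla_{e_\alpha}e_\beta)^\top,f_j)$: one copy arises from Lemma \ref{lem-loc} when differentiating the $\sigma$'s inside $Y_u$, and another from differentiating the vectors $(\nabla_{e_\alpha}e_\beta)^\top$ inside $Y_u'$, and these must cancel exactly after using the symmetry of $T_{\beta_\flat\alpha_\flat(u)}$ (Corollary \ref{Cor-adj-T_u}, since each $A_\alpha$ is self-adjoint) to convert $\mathrm{div}_{E'}$ acting on $T$ into $\mathrm{div}_{E'}T^*$ via \eqref{div-ast1}–\eqref{div-ast2}. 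Getting the index symmetrization and the interplay between $\alpha_\flat$ and $\beta_\flat$ (which commute, by Proposition \ref{Prop-GNT}) correct so that everything collapses to the stated four terms is the bulk of the work; none of the individual steps is hard, but the cancellations are unforgiving.
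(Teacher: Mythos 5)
Your overall route is the same as the paper's (split $Y_u$ into its $E'$- and $E''$-parts, work at a point in the adapted frame of Lemma \ref{lem-loc}, use \eqref{Eq-diff-sigma}, Proposition \ref{Prop-EtoEbis}, (GN3) of Proposition \ref{Prop-GNT} and \eqref{div-ast2}), but the assembly as you describe it contains a genuine bookkeeping error around the mean-curvature terms, and it would not produce the stated formula. In the $D$-directions the shape-operator correction from Proposition \ref{Prop-EtoEbis}(1) does \emph{not} drop out: pairing $-\sigma_{\alpha_\flat(u)}A_\alpha(f_i)$ with $f_i\in D$ and summing over $i$ gives $-\sum_\alpha(\tr A_\alpha)\,\sigma_{\alpha_\flat(u)}$ (it is the $\nabla^{E''}$-part, which lies in $D^{\bot}$, that contributes nothing), and this term is exactly what cancels the $+\sum_\alpha(\tr A_\alpha)\sigma_{\alpha_\flat(u)}$ remainder coming from (GN3), leaving $-|u|\sigma_u$. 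Since $\tr A_\alpha=g(H_D,e_\alpha)$ is a mean-curvature-of-$D$ quantity, the (GN3) remainder is \emph{not} ``$g(H_{D^{\bot}},\cdot)$-type'' and cannot combine with anything to give $-g(H_{D^{\bot}},T_{\beta_\flat\alpha_\flat(u)}(\nabla_{e_\alpha}e_\beta)^{\top})$; likewise the ``$E''$-divergence of the $e_\alpha$'s'' vanishes at $x$ in your adapted frame (because $\nabla^{\bot}e_\alpha(x)=0$) and would in any case not involve $T_{\beta_\flat\alpha_\flat(u)}(\nabla_{e_\alpha}e_\beta)^{\top}$. The $H_{D^{\bot}}$-term actually arises from the normal directions applied to the $E'$-part, i.e. from the very correction of Proposition \ref{Prop-EtoEbis}(2) that you introduce but never use: $\sum_\gamma g\bigl(\nabla_{e_\gamma}\bigl(T_{\beta_\flat\alpha_\flat(u)}(\nabla_{e_\alpha}e_\beta)^{\top}\bigr),e_\gamma\bigr)=-g\bigl(T_{\beta_\flat\alpha_\flat(u)}(\nabla_{e_\alpha}e_\beta)^{\top},H_{D^{\bot}}\bigr)$, by differentiating $g(T_{\beta_\flat\alpha_\flat(u)}(\nabla_{e_\alpha}e_\beta)^{\top},e_\gamma)=0$.

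A second, related flaw: the cancellation of the two Hessian-type terms must not rely on Corollary \ref{Cor-adj-T_u}. The lemma is stated for an arbitrary distribution $D$, where the $A_\alpha$ are in general not self-adjoint and $T_u\neq T_u^{\ast}$ (this is precisely why $T^{\ast}$ appears in the statement); an argument invoking $T_u=T_u^{\ast}$ would only cover integrable $D$. No symmetry is needed: tracing the Hessian term of Lemma \ref{lem-loc} against $T_{\beta_\flat\alpha_\flat(u)}$ gives $-\sum_i g\bigl(\nabla^{\top}_{f_i}(\nabla_{e_\alpha}e_\beta)^{\top},T^{\ast}_{\beta_\flat\alpha_\flat(u)}f_i\bigr)$, which cancels the correction term in \eqref{div-ast2} applied with $S=T_{\beta_\flat\alpha_\flat(u)}$ and $Y=(\nabla_{e_\alpha}e_\beta)^{\top}$ as it stands. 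With these two points repaired your computation reduces to the paper's proof.
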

\begin{proof}
By \eqref{appx-divfunc}, \eqref{Eq-diff-sigma} and Proposition \ref{Prop-EtoEbis} we have
\begin{align*}
{\rm div}_E\left(\sum_{\alpha}\sigma_{\alpha_\flat(u)}e_{\alpha}\right)
&=\sum_{\alpha}e_{\alpha}^h(\sigma_{\alpha_\flat(u)})+\sum_{\alpha}{\rm div}_E(e_{\alpha} )\cdot\sigma_{\alpha_\flat(u)}\\
&=\sum_{\alpha,\beta}\tr((\nabla^{E'}_{e_{\alpha}^h}A)_{\beta}\cdot T_{\beta_\flat\alpha_\flat(u)})+\sum_{\alpha,i}g(\nabla^E_{f_i^h}e_{\alpha},f_i\circ\pi)\sigma_{\alpha_\flat(u)}\\
&+\sum_{\alpha,\beta}g(\nabla^E_{e_{\beta}^h}e_{\alpha},e_{\beta}\circ\pi)\sigma_{\alpha_\flat(u)}\\
&=\sum_{\alpha,\beta}\tr((\nabla^{E'}_{e_{\alpha}^h}A)_{\beta}\cdot T_{\beta_\flat\alpha_\flat(u)})
-\sum_{\alpha}\tr(A_{\alpha})\sigma_{\alpha_\flat(u)}\\
&+\sum_{\alpha,\beta}g(\nabla^{E''}_{e_{\beta}^h}e_{\alpha},e_{\beta}\circ\pi)\sigma_{\alpha_\flat(u)},
\end{align*}
where $(f_i)$ is an orthonormal basis of $D$ and $(e_{\beta})$ an orthonormal basis of $D{^\bot}$. Moreover, again by Proposition \ref{Prop-EtoEbis},
\begin{align*}
{\rm div}_E\left( \sum_{\alpha,\beta}T_{\beta_\flat\alpha_\flat(u)}(\nabla_{e_{\alpha}}e_{\beta})^{\top}\right) &=\sum_{\alpha,\beta,i}g(\nabla^{E'}_{f_i^h}T_{\beta_\flat\alpha_\flat(u)}(\nabla_{e_{\alpha}}e_{\beta})^{\top},f_i\circ\pi)\\
&+\sum_{\alpha,\beta,\gamma}g(\nabla_{e_{\gamma}}T_{\beta_\flat\alpha_\flat(u)}(\nabla_{e_{\alpha}}e_{\beta})^{\top},e_{\gamma}\circ\pi).
\end{align*}
Fix $x\in M$. Let $e=(e_1,\ldots,e_q)$ be a local orthonormal frame field in the neighborhood of $x$ such that $(\nabla^{\bot}e_{\alpha})(x)=0$. Extend $e$ to a local orthonormal basis $(f_1,\ldots,f_p,e_1,\ldots,e_q)$ of $TM$ such that $(\nabla^{\top}f_i)(x)=0$. Then, by above formulas, \eqref{appx-pr2} and Lemma \ref{lem-loc} we have at point $(x,e)$ (we omit composition with basis e and projection $\pi$)
\begin{align*}
{\rm div}_EY_u&=\sum_{\alpha,\beta}\tr((\nabla^{\top}_{e_{\alpha}}A_{\beta})\cdot T_{\beta_\flat\alpha_\flat(u)})
-\sum_{\alpha}\tr(A_{\alpha})\sigma_{\alpha_\flat(u)}\\
&+\sum_{\alpha,\beta,i}g(\nabla^{\top}_{e_i}T_{\beta_\flat\alpha_\flat(u)}(\nabla_{e_{\alpha}}e_{\beta})^{\top},e_i)
+\sum_{\alpha,\beta,\gamma}g(\nabla_{e_{\gamma}}T_{\beta_\flat\alpha_\flat(u)}(\nabla_{e_{\alpha}}e_{\beta})^{\top},e_{\gamma})\\
&=-\sum_{\alpha}\tr(A_{\alpha})\sigma_{\alpha_\flat(u)}
+\sum_{\alpha,\beta,i,j}(e_{\alpha}(A_{\beta})_{ij}) (T_{\beta_\flat\alpha_\flat(u)})_{ji}\\
&+\sum_{\alpha,\beta,i}g(\nabla^{\top}_{e_i}T_{\beta_\flat\alpha_\flat(u)}(\nabla_{e_{\alpha}}e_{\beta})^{\top},e_i)
-\sum_{\alpha,\beta,\gamma}g(T_{\beta_\flat\alpha_\flat(u)}(\nabla_{e_{\alpha}}e_{\beta})^{\top},\nabla_{e_{\gamma}}e_{\gamma})\\
&=-\sum_{\alpha}\tr(A_{\alpha})\sigma_{\alpha_\flat(u)}
+\sum_{\alpha,\beta}\tr(A_{\alpha}A_{\beta}T_{\beta_\flat\alpha_\flat(u)})
+\sum_{\alpha,\beta}\tr(R_{\alpha,\beta}T_{\beta_\flat\alpha_\flat(u)})\\
&-
\sum_{\alpha,\beta,i,j}g(\nabla_{f_i}(\nabla_{e_{\alpha}}e_{\beta})^{\top},f_j)(T_{\beta_\flat\alpha_\flat(u)})_{ji}\\
&+\sum_{\alpha,\beta,\gamma,i,j}g((\nabla_{e_{\alpha}}e_{\gamma})^{\top},f_i)g(f_j,(\nabla_{e_{\gamma}}e_{\beta})^{\top})(T_{\beta_\flat\alpha_\flat(u)})_{ji}\\
&+\sum_{\alpha,\beta}{\rm div}^{\top}(T_{\beta_\flat\alpha_\flat(u)}(\nabla_{e_{\alpha}}e_{\beta})^{\top})
-g(H_{D^{\bot}},T_{\beta_\flat\alpha_\flat(u)}(\nabla_{e_{\alpha}}e_{\beta})^{\top}).
\end{align*}
By Proposition \ref{Prop-GNT} (GN3) we obtain
\begin{align*}
{\rm div}_EY_u &=-|u|\sigma_u+\sum_{\alpha,\beta}\tr(R_{\alpha,\beta}T_{\beta_\flat\alpha_\flat(u)})
-\sum_{\alpha,\beta,i,j}g(\nabla_{f_i}(\nabla_{e_{\alpha}}e_{\beta})^{\top},(T_{\beta_\flat\alpha_\flat(u)})_{ji}f_j)\\
&+\sum_{\alpha,\beta,\gamma,i}g((\nabla_{e_{\alpha}}e_{\gamma})^{\top},f_i)g((T_{\beta_\flat\alpha_\flat(u)})_{ji}f_j,\nabla^{\top}_{e_{\gamma}}e_{\beta})\\
&+\sum_{\alpha,\beta}{\rm div}^{\top}(T_{\beta_\flat\alpha_\flat(u)}(\nabla_{e_{\alpha}}e_{\beta})^{\top})
-g(H_{D^{\bot}},T_{\beta_\flat\alpha_\flat(u)}(\nabla_{e_{\alpha}}e_{\beta})^{\top})\\
&=-|u|\sigma_u+\sum_{\alpha,\beta}\tr(R_{\alpha,\beta}T_{\beta_\flat\alpha_\flat(u)})
-\sum_{\alpha,\beta,i}g(\nabla_{f_i}(\nabla_{e_{\alpha}}e_{\beta})^{\top},T^{\ast}_{\beta_\flat\alpha_\flat(u)}f_i)\\
&+\sum_{\alpha,\beta,\gamma,i}g((\nabla_{e_{\alpha}}e_{\gamma})^{\top},f_i)g((T^{\ast}_{\beta_\flat\alpha_\flat(u)}f_i,\nabla^{\top}_{e_{\gamma}}e_{\beta})\\
&+\sum_{\alpha,\beta}{\rm div}^{\top}(T_{\beta_\flat\alpha_\flat(u)}(\nabla_{e_{\alpha}}e_{\beta})^{\top})
-g(H_{D^{\bot}},T_{\beta_\flat\alpha_\flat(u)}(\nabla_{e_{\alpha}}e_{\beta})^{\top}).
\end{align*}
Putting $Y=(\nabla_{e_{\alpha}}e_{\beta})^{\top}$ and $S=T_{\beta_\flat\alpha_\flat(u)}$ in \eqref{div-ast2} we get
\begin{equation*}
g({\rm div}^{\top}T_{\beta_\flat\alpha_\flat(u)}^{\ast},(\nabla_{e_{\alpha}}e_{\beta})^{\top})
={\rm div}^{\top}(T_{\beta_\flat\alpha_\flat(u)}(\nabla_{e_{\alpha}}e_{\beta})^{\top})-\sum_i g(T_{\beta_\flat\alpha_\flat(u)}^{\ast}f_i,\nabla^{\top}_{f_i}(\nabla_{e_{\alpha}}e_{\beta})^{\top}).
\end{equation*}
Hence
\begin{align*}
{\rm div}_EY_u &=-|u|\sigma_u+\sum_{\alpha,\beta}\tr(R_{\alpha,\beta}T_{\beta_\flat\alpha_\flat(u)})
+\sum_{\alpha,\beta}g({\rm div}^{\top}T_{\beta_\flat\alpha_\flat(u)}^{\ast},(\nabla_{e_{\alpha}}e_{\beta})^{\top})\\
&+\sum_{\alpha,\beta,\gamma}g((T^{\ast}_{\beta_\flat\alpha_\flat(u)}(\nabla_{e_{\alpha}}e_{\gamma})^{\top},
(\nabla_{e_{\gamma}}e_{\beta})^{\top})-g(H_{D^{\bot}},T_{\beta_\flat\alpha_\flat(u)}(\nabla_{e_{\alpha}}e_{\beta})^{\top}).
\end{align*}
Moreover, by Proposition \ref{appx-pr6} ${\rm div}^{\top}T_{\beta_\flat\alpha_\flat(u)}^{\ast}={\rm div}_{E'}T_{\beta_\flat\alpha_\flat(u)}^{\ast}$, which completes the proof.
\end{proof}

\begin{theorem}\label{Thm-main1}
Assume $M$ is closed. Then, for any $u\in\mathbb{N}(q)$, the total extrinsic curvature $\sigma^M_u$ satisfies
\begin{equation}\label{main-eq1}
\begin{split}
|u|\sigma^M_u &=\sum_{\alpha,\beta}\int_P \Big(\tr(R_{\alpha,\beta}T_{\beta_\flat\alpha_\flat(u)})
+g({\rm div}_{E'}T_{\beta_\flat\alpha_\flat(u)}^{\ast},(\nabla_{e_{\alpha}}e_{\beta})^{\top})\\
&-g(H_{D^{\bot}},T_{\beta_\flat\alpha_\flat(u)}(\nabla_{e_{\alpha}}e_{\beta})^{\top})
+\sum_{\gamma}g((T^{\ast}_{\beta_\flat\alpha_\flat(u)}
(\nabla_{e_{\alpha}}e_{\gamma})^{\top},(\nabla_{e_{\gamma}}e_{\beta})^{\top})\Big),
\end{split}
\end{equation}
where $H_{D^{\bot}}$ denotes the mean curvature vector of distribution $D^{\bot}$.
\end{theorem}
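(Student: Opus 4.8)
The plan is to integrate the pointwise divergence formula from Lemma~\ref{Lem-divY_u} over $M$ and apply the divergence theorem. The key observation is that $Y_u\in\Gamma(E)$ gives rise, by integration over the fibres of $P$, to a genuine vector field $\widehat{Y_u}\in\Gamma(TM)$, and the quantity ${\rm div}_E Y_u$ is precisely the integrand whose fibrewise integral equals ${\rm div}\,\widehat{Y_u}$. So first I would invoke the Appendix results identifying $\int_{P_x}({\rm div}_E Y_u)(x,e)\,de$ with $({\rm div}\,\widehat{Y_u})(x)$, or equivalently work directly on $P$: since $\pi:P\to M$ is a Riemannian submersion with compact structure group and $M$ is closed, $P$ is closed, and the Fubini formula \eqref{Fubini} lets me write $\int_M({\rm div}\,\widehat{Y_u})\,dx = \int_P ({\rm div}_E Y_u)\,d(x,e)$.

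The second step is to observe that $\int_M {\rm div}\,\widehat{Y_u}\,dx = 0$ by Stokes' theorem (equivalently, the integral over the closed manifold $P$ of the divergence term coming from the horizontal part vanishes, and the vertical contribution is accounted for in ${\rm div}_E$; this is exactly the content of the Appendix divergence formulas such as \eqref{appx-divfunc}). Hence
\begin{equation*}
0 = \int_P {\rm div}_E Y_u\, d(x,e).
\end{equation*}
Now I substitute the expression for ${\rm div}_E Y_u$ from Lemma~\ref{Lem-divY_u}. The term $-|u|\sigma_u$ integrates to $-|u|\sigma^M_u$ by the definition \eqref{eq:totalsigma_u} of the total extrinsic curvature together with the Fubini identity $\sigma_u^M=\int_P\sigma_u\,d(x,e)$. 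Moving this term to the left-hand side yields exactly \eqref{main-eq1}, with the four remaining terms—$\tr(R_{\alpha,\beta}T_{\beta_\flat\alpha_\flat(u)})$, the divergence term $g({\rm div}_{E'}T^{\ast}_{\beta_\flat\alpha_\flat(u)},(\nabla_{e_\alpha}e_\beta)^{\top})$, the mean-curvature term, and the quadratic term—integrated over $P$.

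The only genuine subtlety, and the step I expect to require the most care, is the justification that the $E'$-divergence term ${\rm div}_{E'}T^{\ast}_{\beta_\flat\alpha_\flat(u)}$ does \emph{not} integrate to zero on its own and must be kept: unlike the naive guess that everything "total-divergence-like" drops out, only the full $E$-divergence ${\rm div}_E Y_u$ is a true divergence on the closed manifold $P$, whereas ${\rm div}_{E'}$ is a partial (leafwise) divergence which generally has nonzero integral. This is precisely why the proof of Lemma~\ref{Lem-divY_u} was organized to isolate ${\rm div}_E Y_u$ as a single divergence; here I simply need to be careful to integrate the \emph{left}-hand side of that lemma and not prematurely discard the ${\rm div}_{E'}$ contribution on the right. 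Beyond that, the argument is a direct application of Stokes/Fubini to the pointwise identity already established, so no further hard computation is needed.
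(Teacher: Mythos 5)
Your proposal is correct and follows essentially the same route as the paper: the paper's proof consists precisely of integrating the identity of Lemma \ref{Lem-divY_u} over $P$ and invoking Proposition \ref{appx-pr5} (the fibrewise-integration/Fubini statement that $\int_P{\rm div}_EY_u=\int_M{\rm div}\,\widehat{Y_u}=0$ on a closed $M$), together with $\sigma^M_u=\int_P\sigma_u$. Your additional remark that only ${\rm div}_EY_u$, not the partial ${\rm div}_{E'}$ term, is a genuine divergence on the closed manifold is a correct and appropriate clarification, not a deviation.
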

\begin{proof}
Follows immediately by Lemma \ref{Lem-divY_u} and Proposition \ref{appx-pr5}.
\end{proof}

By Theorem \ref{Thm-main1}, we have in particular
\begin{equation*}
\sigma^M_{\alpha^\sharp(0,\ldots,0)}=0
\end{equation*}
and
\begin{multline}\label{eq:sigma2}
2\sigma^M_{\alpha^\sharp\beta^\sharp(0,\ldots,0)}=\int_P \Big(\left({\rm Ric}_D\right)_{\alpha,\beta}
-g(H_{D^{\bot}},(\nabla_{e_{\alpha}}e_{\beta})^{\top})\\
+\sum_{\gamma}g((\nabla_{e_{\alpha}}e_{\gamma})^{\top},(\nabla_{e_{\gamma}}e_{\beta})^{\top})\Big),
\end{multline}
where $({\rm Ric}_D)_{\alpha,\beta}={\rm Ric}_D(e_{\alpha},e_{\beta})$ and ${\rm Ric}_D$ is the Ricci curvature operator in the direction of $D$, i.e.,
\begin{equation*}
{\rm Ric}_D(X,Y)=\sum_i g(R(f_i,X)Y,f_i),
\end{equation*}
where $(f_i)$ is an orthonormal basis of $D$.

Notice that in Theorem \ref{Thm-main1} total extrinsic curvature $\sigma^M_u$ is expressed by generalized Newton transformations $T_{\alpha_\flat\beta_\flat(u)}$ of lower order and by its divergence. On the other hand, we have a recurrence formula \eqref{Eq-T_u} for generalized Newton transformation. We derive the recurrence formula for the divergence of $T^{\ast}_u$ and finally the explicit formula for ${\rm div}_{E'}T^{\ast}_u$.

From Codazzi formula, if $(\nabla N)^{\bot}=0$, where $N\in \Gamma(D^{\bot})$, it follows that
\begin{equation}\label{Codazzi}
(\nabla^{\top}_XA_N)Y-(\nabla^{\top}_YA_N)X=-(R(X,Y)N)^{\top}+(\nabla_{[X,Y]^{\bot}}N)^{\top}
\end{equation}
for $X,Y\in D$.

\begin{proposition}\label{Prop-divTast}
The divergence ${\rm div}_{E'}T^{\ast}_u$, $u\in\mathbb{N}(q)$, satisfies the recurrence formula
\begin{align*}
{\rm div}_{E'}T^{\ast}_u &=\sum_{\alpha}\left( \tr_D(R(e_{\alpha},T^{\ast}_{\alpha_\flat(u)}\cdot)\cdot)^{\top}-A^{\ast}_{\alpha}({\rm div}_{E'}T^{\ast}_{\alpha_\flat(u)}) \right)\\
&-\sum_{\alpha,\beta}(A_{\beta}-A^{\ast}_{\beta})T_{\alpha_\flat(u)}(\nabla_{e_{\beta}}e_{\alpha})^{\top}.
\end{align*}
\end{proposition}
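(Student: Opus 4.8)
The plan is to mimic the way the scalar recurrence (GN3) propagated through the proof of Lemma~\ref{Lem-divY_u}, but now at the level of the divergence of $T_u^\ast$ rather than of $\sigma_u$. First I would start from the recurrence \eqref{Eq-T_u} for the generalized Newton transformation, apply the adjoint operation, and then take the covariant divergence ${\rm div}_{E'}$ of both sides. Using $T_u^\ast = \sigma_u 1_V - \sum_\alpha A_\alpha^\ast T_{\alpha_\flat(u)}^\ast$ (which follows from \eqref{Eq-T_u} together with Corollary~\ref{Cor-adj-T_u}-style adjoint bookkeeping), the Leibniz rule for $\nabla^{E'}$ gives a term where the derivative hits $A_\alpha^\ast$, a term where it hits $T_{\alpha_\flat(u)}^\ast$, and the divergence of $\sigma_u 1_V$. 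The $\nabla^{E'}\sigma_u$ piece should be rewritten via \eqref{Eq-diff-sigma}, and the term where the derivative lands on $A_\alpha^\ast$ should be converted using the Codazzi equation \eqref{Codazzi} so that $(\nabla^\top A_\alpha)$ with the derivative in a "bad" slot gets swapped into one with the derivative in the divergence slot, at the cost of a curvature term $(R(e_\alpha,\cdot)e_\alpha)^\top$-type contribution and a term involving $(\nabla_{[\cdot,\cdot]^\bot}e_\alpha)^\top$, which is where the $(A_\beta - A_\beta^\ast)T_{\alpha_\flat(u)}(\nabla_{e_\beta}e_\alpha)^\top$ correction will come from.

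Concretely I would fix $x\in M$ and a local orthonormal frame $(f_1,\dots,f_p,e_1,\dots,e_q)$ adapted to $D\oplus D^\bot$ with $(\nabla^\top f_i)(x)=0$ and $(\nabla^\bot e_\alpha)(x)=0$, exactly as in Lemma~\ref{lem-loc} and Lemma~\ref{Lem-divY_u}, so that at $x$ the divergence is just $\sum_i \nabla^{E'}_{f_i}(\,\cdot\,)(f_i)$ and the connection coefficients of both adapted sub-bundles vanish. Writing out ${\rm div}_{E'}T_u^\ast = \sum_i (\nabla^{E'}_{f_i}T_u^\ast)(f_i)$ and substituting the adjoint of \eqref{Eq-T_u}, I get
\begin{equation*}
{\rm div}_{E'}T_u^\ast = ({\rm grad}^\top\sigma_u) - \sum_\alpha\Big(\sum_i(\nabla^\top_{f_i}A_\alpha^\ast)(T_{\alpha_\flat(u)}^\ast f_i) + A_\alpha^\ast({\rm div}_{E'}T_{\alpha_\flat(u)}^\ast)\Big),
\end{equation*}
where the middle sum is the one to be rewritten. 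For that middle sum, at $x$ one has $(\nabla^\top_{f_i}A_\alpha^\ast) = (\nabla^\top_{f_i}A_\alpha)^\ast$ by \eqref{div-ast1}, and the Codazzi identity \eqref{Codazzi} (applied with $N=e_\alpha$, using $(\nabla^\bot e_\alpha)(x)=0$) lets me replace $(\nabla^\top_{f_i}A_\alpha)(T_{\alpha_\flat(u)}^\ast f_i)$-type expressions by $(\nabla^\top_{T_{\alpha_\flat(u)}^\ast f_i}A_\alpha)(f_i)$ plus a curvature term $-(R(f_i, T_{\alpha_\flat(u)}^\ast f_i)e_\alpha)^\top$ plus a bracket term $(\nabla_{[f_i, T_{\alpha_\flat(u)}^\ast f_i]^\bot}e_\alpha)^\top$. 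Summing over $i$ and pairing against the basis, the symmetrized-in-$i$ curvature contribution assembles into $\sum_\alpha \tr_D(R(e_\alpha,T_{\alpha_\flat(u)}^\ast\,\cdot\,)\,\cdot\,)^\top$, the first term recombines (together with ${\rm grad}^\top\sigma_u$ via \eqref{Eq-diff-sigma}) so as to cancel, and the bracket term, after expanding $[f_i,Z]^\bot = \sum_\beta g(\nabla_{f_i}Z - \nabla_Z f_i, e_\beta)e_\beta$ and using that $(\nabla^\top f_i)(x)=0$, produces exactly $\sum_{\beta}$ of shape-operator expressions that combine into the antisymmetrized piece $-\sum_{\alpha,\beta}(A_\beta - A_\beta^\ast)T_{\alpha_\flat(u)}(\nabla_{e_\beta}e_\alpha)^\top$.

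The main obstacle I expect is bookkeeping the several contractions so that the $\sigma_u$-gradient term genuinely cancels against the "good-slot" piece of the Codazzi-rewritten sum: this requires using \eqref{Eq-diff-sigma} at the frame-field level (i.e. $f_i(\sigma_u\circ e) = \sum_\alpha \tr((\nabla^\top_{f_i}A_\alpha)T_{\alpha_\flat(u)})$) and matching it term-by-term against $\sum_{\alpha,i} g((\nabla^\top_{T_{\alpha_\flat(u)}^\ast f_i}A_\alpha)(f_i), f_i)$ after using the symmetry of $(\nabla^\top A_\alpha)$ in its first two arguments (Codazzi again, or directly from $A_\alpha$ being the restriction of the Hessian-type operator) — i.e. one needs $(\nabla^\top_X A_\alpha)Y$ symmetric in $X,Y$ modulo the curvature/bracket defect, which is precisely the content of \eqref{Codazzi}. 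Once that cancellation is in place the remaining terms are just the three displayed ones, so the rest is routine reorganization. I would also be careful that the identification $(\nabla^\top_{f_i}A_\alpha^\ast) = (\nabla^\top_{f_i}A_\alpha)^\ast$ only holds because $A_\alpha$ need not itself be self-adjoint (the distribution $D$ is not assumed integrable), which is exactly why the antisymmetric $A_\beta - A_\beta^\ast$ term survives rather than vanishing.
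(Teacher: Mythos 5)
Your plan follows the same skeleton as the paper's proof: take the adjoint of the recurrence \eqref{Eq-T_u}, apply ${\rm div}_{E'}$ via \eqref{divE1}--\eqref{divE2}, use \eqref{div-ast1}, then Codazzi \eqref{Codazzi} plus \eqref{Eq-diff-sigma} to cancel the $\sigma_u$--gradient and produce the curvature and $(A_\beta-A_\beta^{\ast})$ terms. However, there is a genuine gap in your Codazzi step. The term produced by the Leibniz rule is $\sum_{\alpha,i}(\nabla^{\top}_{f_i}A_\alpha)^{\ast}\bigl(T^{\ast}_{\alpha_\flat(u)}f_i\bigr)$; it carries the \emph{adjoint} of $\nabla^{\top}A_\alpha$, while \eqref{Codazzi} is an identity for $\nabla^{\top}A_\alpha$ itself. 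Since $D$ is not assumed integrable, $A_\alpha$ is not self-adjoint and you cannot silently pass to ``$(\nabla^{\top}_{f_i}A_\alpha)(T^{\ast}_{\alpha_\flat(u)}f_i)$--type expressions''. The correct maneuver (the one the paper uses) is to pair the whole expression with an arbitrary test vector $X\in D_x$, move the adjoint across the metric,
\begin{equation*}
g\bigl((\nabla^{\top}_{f_i}A_\alpha)^{\ast}T^{\ast}_{\alpha_\flat(u)}f_i,\,X\bigr)=g\bigl(T^{\ast}_{\alpha_\flat(u)}f_i,\,(\nabla^{\top}_{f_i}A_\alpha)X\bigr),
\end{equation*}
and only then apply \eqref{Codazzi} with the slot pair $(f_i,X)$ --- not, as you propose, with the pair $\bigl(f_i,\,T^{\ast}_{\alpha_\flat(u)}f_i\bigr)$.

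With your choice of slots none of the three announced outcomes actually materializes. Your curvature defect $\sum_i\bigl(R(f_i,T^{\ast}_{\alpha_\flat(u)}f_i)e_\alpha\bigr)^{\top}$ pairs with $X$ to give $\sum_i g\bigl(R(e_\alpha,X)f_i,\,T^{\ast}_{\alpha_\flat(u)}f_i\bigr)$, a different contraction from the one in the statement, which corresponds to $\sum_i g\bigl(R(f_i,X)e_\alpha,\,T^{\ast}_{\alpha_\flat(u)}f_i\bigr)=g\bigl(\tr_D(R(e_\alpha,T^{\ast}_{\alpha_\flat(u)}\cdot)\cdot)^{\top},X\bigr)$. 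Your bracket defect $\sum_i\bigl(\nabla_{[f_i,T^{\ast}_{\alpha_\flat(u)}f_i]^{\bot}}e_\alpha\bigr)^{\top}$ sums to the scalar coefficients $\tr\bigl(T_{\alpha_\flat(u)}(A_\beta-A_\beta^{\ast})\bigr)$ multiplying $(\nabla_{e_\beta}e_\alpha)^{\top}$, not the operator expression $(A_\beta-A_\beta^{\ast})T_{\alpha_\flat(u)}(\nabla_{e_\beta}e_\alpha)^{\top}$ required. And the cancellation you describe does not typecheck: what must cancel is $g({\rm grad}^{\top}\sigma_u,X)=X(\sigma_u\circ e)=\sum_{\alpha,i}g\bigl(T^{\ast}_{\alpha_\flat(u)}f_i,(\nabla^{\top}_XA_\alpha)f_i\bigr)$ for every $X$, whereas your matching target $\sum_{\alpha,i}g\bigl((\nabla^{\top}_{T^{\ast}_{\alpha_\flat(u)}f_i}A_\alpha)f_i,f_i\bigr)$ is a single scalar independent of $X$. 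Once the adjoint is routed through the inner product and Codazzi is applied to $(f_i,X)$, the good-slot term becomes $\sum_\alpha\tr\bigl((\nabla^{\top}_XA_\alpha)T_{\alpha_\flat(u)}\bigr)$ and cancels by \eqref{Eq-diff-sigma}, while $[f_i,X]^{\bot}$ yields $g\bigl(X,(A_\beta-A_\beta^{\ast})f_i\bigr)$ and hence exactly the $(A_\beta-A_\beta^{\ast})T_{\alpha_\flat(u)}(\nabla_{e_\beta}e_\alpha)^{\top}$ term. A minor point: the identity $\nabla^{\top}_XA_\alpha^{\ast}=(\nabla^{\top}_XA_\alpha)^{\ast}$ of \eqref{div-ast1} holds for any metric connection regardless of self-adjointness, so your closing caveat about it is misplaced.
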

\begin{proof}
By \eqref{Eq-T_u} we have $T^{\ast}_u=\sigma_u\cdot{\rm id}_D-\sum_{\alpha}A^{\ast}_{\alpha}T^{\ast}_{\alpha_\flat(u)}$. Since ${\rm div}_{E'}({\rm id}_D)=0$, by \eqref{divE1} and \eqref{divE2} we get
\begin{equation}\label{Eq-divt1}
{\rm div}_{E'}T^{\ast}_u=\pi_{\ast}({\rm grad}_{D^h}\sigma_u)
-\sum_{\alpha,i}(\nabla^{E'}_{f_i^h}A_{\alpha}^{\ast})(T_{\alpha_\flat(u)}f_i)
-\sum_{\alpha}A^{\ast}_{\alpha}({\rm div}_{E'}T^{\ast}_{\alpha_\flat(u)}),
\end{equation}
where ${\rm grad}_{D^h}(\sigma_u)$ denotes the $D^h$--component of gradient of $\sigma_u$ and $(f_i)$ is a local orthonormal basis of $D$. Fix $x\in M$. Let $e=(e_1,\ldots,e_q)$ be a local section of $P$ such that $(\nabla^{\bot}e_{\alpha})(x)=0$. Then, for any vector $X\in D_x$
\begin{equation*}
g(\pi_{\ast}({\rm grad}_{D^h}\sigma_u),X)=X(\sigma_u\circ e)
\end{equation*}
and, by Proposition \ref{appx-pr6}, \eqref{Codazzi} and \eqref{div-ast1},
\begin{align*}
g(\sum_{\alpha,i}(\nabla^{E'}_{f_i^h}A_{\alpha}^{\ast})(T^{\ast}_{\alpha_\flat(u)}f_i),X) &=
\sum_{\alpha,i}g((\nabla^{\top}_{f_i}A_{\alpha})^{\ast}T^{\ast}_{\alpha_\flat(u)}f_i,X)\\
&=\sum_{\alpha,i}g(T^{\ast}_{\alpha_\flat(u)}f_i,(\nabla^{\top}_{f_i}A_{\alpha})X)\\
&=\sum_{\alpha,i}g(T^{\ast}_{\alpha_\flat(u)}f_i,(\nabla^{\top}_XA_{\alpha})f_i)\\
&+\sum_{\alpha,i}g(T^{\ast}_{\alpha_\flat(u)}f_i,-(R(f_i,X)e_{\alpha})^{\top}+(\nabla_{[f_i,X]^{\bot}}e_{\alpha})^{\top})\\
&=\sum_{\alpha}\tr((\nabla^{\top}_XA_{\alpha}\cdot T_{\alpha_\flat(u)})
-\sum_{\alpha,i}g(R(e_{\alpha},T^{\ast}_{\alpha_\flat(u)}f_i)f_i,X)\\
&+\sum_{\alpha,i}g(T^{\ast}_{\alpha_\flat(u)}f_i,(\nabla_{[f_i,X]^{\bot}}e_{\alpha})^{\top}).
\end{align*}
Hence, by \eqref{Eq-diff-sigma}
\begin{multline}\label{Eq-divt2}
\pi_{\ast}({\rm grad}_{D^h}\sigma_u)
-\sum_{\alpha,i}(\nabla^{E'}_{f_i^h}A_{\alpha}^{\ast})(T_{\alpha_\flat(u)}f_i)\\
=\sum_{\alpha,i}g(R(e_{\alpha},T^{\ast}_{\alpha_\flat(u)}f_i)f_i,X)
-\sum_{\alpha,i}g(T^{\ast}_{\alpha_\flat(u)}f_i,(\nabla_{[f_i,X]^{\bot}}e_{\alpha})^{\top}).
\end{multline}
Moreover
\begin{align}
\sum_ig(T^{\ast}_{\alpha_\flat(u)}f_i,(\nabla_{[f_i,X]^{\bot}}e_{\alpha})^{\top}) &=
\sum_{\beta,i}g(T^{\ast}_{\alpha_\flat(u)}f_i,(\nabla_{e_{\beta}}e_{\alpha})^{\top})g(e_{\beta},\nabla_{f_i}X-\nabla_Xf_i)\notag\\
&=\sum_{\beta,i}g(f_i,T_{\alpha_\flat(u)}(\nabla_{e_{\beta}}e_{\alpha})^{\top})g(X,(A_{\beta}-A_{\beta}^{\ast})f_i)\label{Eq-divt3}\\
&=\sum_{\beta}g(X,(A_{\beta}-A_{\beta}^{\ast})T_{\alpha_\flat(u)}(\nabla_{e_{\beta}}e_{\alpha})^{\top}).\notag
\end{align}
By \eqref{Eq-divt1}--\eqref{Eq-divt3} proposition follows.
\end{proof}

\begin{cor}\label{Cor-divTu}
The divergence ${\rm div}_{E'}T^{\ast}_u$ of $T^{\ast}_u$ can be expressed as follows
\begin{align*}
{\rm div}_{E'}T^{\ast}_u &=\sum_{1\leq s\leq |u|}\sum_{\alpha_1,\ldots,\alpha_s}
(-1)^{s-1}A^{\ast}_{\alpha_1}\ldots A^{\ast}_{\alpha_{s-1}}\left(\tr_D R(e_{\alpha_s},T^{\ast}_{(\alpha_s)_\flat\ldots(\alpha_1)_\flat(u)}\cdot)\cdot \right)^{\top}\\
&-\sum_{1\leq s\leq |u|}\sum_{\alpha_1,\ldots,\alpha_j,\beta}(-1)^{s-1}A^{\ast}_{\alpha_1}\ldots A^{\ast}_{\alpha_{s-1}}(A_{\beta}-A^{\ast}_{\beta})T_{(\alpha_s)_\flat\ldots(\alpha_1)_\flat(u)}(\nabla_{e_{\beta}}e_{\alpha_s})^{\top}).
\end{align*}
\end{cor}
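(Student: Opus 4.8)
The plan is to unwind the recurrence in Proposition \ref{Prop-divTast} by induction on $|u|$. The base case $|u|=0$ is immediate since $T_0=1_V$, $\sigma_0$ is constant on $P$, and ${\rm div}_{E'}(1_V)=0$, so the right--hand side of the claimed formula is empty and both sides vanish.

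\medskip
\textbf{Inductive step.} Assume the asserted closed form holds for every multi--index of length $|u|-1$, in particular for each $\alpha_\flat(u)$. Apply Proposition \ref{Prop-divTast}:
\begin{equation*}
{\rm div}_{E'}T^{\ast}_u =\sum_{\alpha}\left(\tr_D(R(e_{\alpha},T^{\ast}_{\alpha_\flat(u)}\cdot)\cdot)^{\top}-A^{\ast}_{\alpha}({\rm div}_{E'}T^{\ast}_{\alpha_\flat(u)})\right)-\sum_{\alpha,\beta}(A_{\beta}-A^{\ast}_{\beta})T_{\alpha_\flat(u)}(\nabla_{e_{\beta}}e_{\alpha})^{\top}.
\end{equation*}
The first sum over $\alpha$ (the curvature term) is precisely the $s=1$ summand of the first sum in the corollary, with the empty product of $A^\ast$'s equal to $1_V$. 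The last sum over $\alpha,\beta$ is precisely the $s=1$ summand of the second sum. It remains to absorb the middle term $-\sum_{\alpha}A^{\ast}_{\alpha}({\rm div}_{E'}T^{\ast}_{\alpha_\flat(u)})$. Here one substitutes the inductive hypothesis for ${\rm div}_{E'}T^{\ast}_{\alpha_\flat(u)}$, which is a sum over $1\le s'\le|u|-1$ and over indices $\alpha_1,\dots,\alpha_{s'}$ (and $\beta$ in the second piece) of terms prefixed by $(-1)^{s'-1}A^\ast_{\alpha_1}\cdots A^\ast_{\alpha_{s'-1}}$, with the inner Newton transformation being $T^{(\ast)}_{(\alpha_{s'})_\flat\cdots(\alpha_1)_\flat(\alpha_\flat(u))}$.

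\medskip
\textbf{Reindexing.} Left--multiplication by $-A^\ast_\alpha$ turns the prefix $(-1)^{s'-1}A^\ast_{\alpha_1}\cdots A^\ast_{\alpha_{s'-1}}$ into $(-1)^{s'}A^\ast_\alpha A^\ast_{\alpha_1}\cdots A^\ast_{\alpha_{s'-1}}$; relabelling $(\alpha,\alpha_1,\dots,\alpha_{s'})=(\alpha_1',\dots,\alpha_{s'+1}')$ and setting $s=s'+1$ (so $2\le s\le|u|$) converts this into exactly the $s\ge 2$ summands of the corollary, since $(\alpha_{s'})_\flat\cdots(\alpha_1)_\flat\alpha_\flat(u)=(\alpha_s')_\flat\cdots(\alpha_1')_\flat(u)$ by commutativity of the $\flat$ maps. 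Matching the curvature pieces gives the $s\ge 2$ part of the first sum and matching the $(A_\beta-A^\ast_\beta)$ pieces gives the $s\ge 2$ part of the second sum. Adding the $s=1$ contributions identified above yields the full formula. (Note the apparent typo ``$\alpha_j$'' in the statement should read ``$\alpha_{s-1}$''; I would state it with $\alpha_1,\dots,\alpha_{s-1},\alpha_s,\beta$ as summation indices.)

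\medskip
I expect the only real obstacle to be notational bookkeeping: keeping the order of the composed operators $A^\ast_{\alpha_1}\cdots A^\ast_{\alpha_{s-1}}$ straight when prepending a new factor, and verifying that the chain of $\flat$--shifts accumulates correctly as $(\alpha_s)_\flat\cdots(\alpha_1)_\flat(u)$ rather than in some other order. Since $\alpha_\flat$ and $\beta_\flat$ commute (used already in the proof of Proposition \ref{Prop-GNT}), the order is immaterial and the induction closes cleanly; the sign $(-1)^{s-1}$ is forced by the single sign flip per recursion level.
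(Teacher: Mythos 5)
Your proposal is correct and is exactly the argument the paper intends: the paper's proof of this corollary is a one-line appeal to induction on $|u|$ together with the recurrence of Proposition \ref{Prop-divTast}, and you have simply carried out the bookkeeping (sign flip per level, prepending $A^{\ast}_{\alpha}$, commutativity of the $\flat$--shifts) that the paper leaves implicit. Your remark that ``$\alpha_j$'' in the statement should read ``$\alpha_{s-1}$'' (with summation over $\alpha_1,\ldots,\alpha_s,\beta$) is also right.
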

\begin{proof}
Follows by induction with respect to $|u|$ and the recurrence formula for ${\rm div}_{E'}T^{\ast}_u$ in Proposition \ref{Prop-divTast}.
\end{proof}

\section{Consequences and special cases}

In this section we show some applications of the formula for the total extrinsic curvatures. We derive the generalization of the Walczak formula \cite{Wal}. Moreover, we compute total extrinsic curvatures for foliations with integrable and totally geodesic normal bundle. We conclude by showing that in the case of one shape operator the results agree with the ones obtained in \cite{Rov} and in the codimension one with the formulas in \cite{AW1}.

We adopt notation from the previous section.

\subsection{Generalization of Walczak formula}

Let $D$ be a $p$--dimensional distribution on a $(p+q)$--dimensional Riemannian manifold $(M,g)$. By Proposition \ref{Prop-GNT}(GN1) and (GN3) we have
\begin{equation*}
(|u|-1)|u|\sigma_u=\sum_{\alpha,\beta}\left( \tr(A_{\alpha})\tr(A_{\beta}T_{\beta_\flat\alpha_\flat(u)})-\tr(A_{\alpha}A_{\beta}T_{\beta_\flat\alpha_\flat(u)}) \right).
\end{equation*} 
Together with \eqref{main-eq1} we obtain the following result.

\begin{cor}\label{Cor-Walfor}
Assume $M$ is closed. For any multi--index $u\in\mathbb{N}(q)$, $|u|>1$, we have the following integral formula
\begin{align}
0 =\sum_{\alpha,\beta}\int_P &\Big(\tr(R_{\alpha,\beta}T_{\beta_\flat\alpha_\flat(u)})
+g({\rm div}_{E'}T_{\beta_\flat\alpha_\flat(u)}^{\ast},(\nabla_{e_{\alpha}}e_{\beta})^{\top}) \notag \\
&-g(H_{D^{\bot}},T_{\beta_\flat\alpha_\flat(u)}(\nabla_{e_{\alpha}}e_{\beta})^{\top})
+\sum_{\gamma}g(T^{\ast}_{\beta_\flat\alpha_\flat(u)}
(\nabla_{e_{\alpha}}e_{\gamma})^{\top},(\nabla_{e_{\gamma}}e_{\beta})^{\top})\label{eq:Walfor}\\
&-\frac{1}{|u|-1}\left( \tr(A_{\alpha})\tr(A_{\beta}T_{\beta_\flat\alpha_\flat(u)})-\tr(A_{\alpha}A_{\beta}T_{\beta_\flat\alpha_\flat(u)}) \right)\Big)\notag
\end{align}
\end{cor}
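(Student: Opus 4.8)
The plan is to combine the divergence formula for $Y_u$ with an algebraic identity for $\sigma_u$ coming from Proposition \ref{Prop-GNT}. First I would recall the integral formula \eqref{main-eq1} of Theorem \ref{Thm-main1}, which already expresses $|u|\sigma^M_u$ as an integral over $P$ of four terms built from $T_{\beta_\flat\alpha_\flat(u)}$, the curvature $R_{\alpha,\beta}$, the mean curvature $H_{D^\perp}$ and the second fundamental form $(\nabla_{e_\alpha}e_\beta)^\top$. This takes care of the left-hand side once we know how to rewrite $|u|\sigma^M_u$; the point of the corollary is precisely that we can express $|u|\sigma_u$ purely algebraically in terms of the $A_\alpha$'s and lower order Newton transformations.

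The key algebraic step is to chain together the two relations in Proposition \ref{Prop-GNT}. From (GN1) we have $|u|\sigma_u=\sum_\beta \tr(A_\beta T_{\beta_\flat(u)})$, and expanding $T_{\beta_\flat(u)}$ by \eqref{Eq-T_u} (in the form $T_{\beta_\flat(u)}=\sigma_{\beta_\flat(u)}1_V-\sum_\alpha A_\alpha T_{\alpha_\flat\beta_\flat(u)}$) yields
\begin{equation*}
|u|\sigma_u=\sum_\beta \tr(A_\beta)\,\sigma_{\beta_\flat(u)}-\sum_{\alpha,\beta}\tr(A_\beta A_\alpha T_{\alpha_\flat\beta_\flat(u)}).
\end{equation*}
Since $\alpha_\flat$ and $\beta_\flat$ commute and the indices are summed symmetrically, this is exactly the statement displayed before the corollary, namely $(|u|-1)|u|\sigma_u=\sum_{\alpha,\beta}\bigl(\tr(A_\alpha)\tr(A_\beta T_{\beta_\flat\alpha_\flat(u)})-\tr(A_\alpha A_\beta T_{\beta_\flat\alpha_\flat(u)})\bigr)$ — here one also uses (GN1) once more to replace $\sum_\beta \tr(A_\beta)\sigma_{\beta_\flat(u)}$; more directly, (GN3) gives $\sum_{\alpha,\beta}\tr(A_\alpha A_\beta T_{\beta_\flat\alpha_\flat(u)})=-|u|\sigma_u+\sum_\alpha(\tr A_\alpha)\sigma_{\alpha_\flat(u)}$, and combining with (GN1) applied to each $\sigma_{\alpha_\flat(u)}$ produces the claimed formula. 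In any case this is the identity stated in the excerpt just before Corollary \ref{Cor-Walfor}, so I would simply cite it.

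Finally I would integrate that pointwise identity over $P$: since $M$ is closed and $\pi:P\to M$ is a Riemannian submersion with compact fibers, $P$ is compact, so $\int_P (|u|-1)|u|\sigma_u\,d(x,e)=(|u|-1)|u|\sigma^M_u$. Dividing \eqref{main-eq1} through and the algebraic identity through by $|u|$ (legitimate since $|u|>1$) and subtracting, the two occurrences of $|u|\sigma^M_u$ cancel, leaving $0$ on the left and, on the right, the integral of the four geometric terms from Theorem \ref{Thm-main1} minus $\frac{1}{|u|-1}$ times the two trace terms, which is precisely \eqref{eq:Walfor}. I do not expect any serious obstacle here — everything rests on results already proved; the only care needed is the bookkeeping with the adjoints ($T^\ast$ versus $T$) in the gradient/divergence term, which is handled exactly as in Lemma \ref{Lem-divY_u}, and the verification that the $\alpha,\beta$ summation in the algebraic identity matches the one appearing in the integral formula after the index relabelling $\beta_\flat\alpha_\flat(u)=\alpha_\flat\beta_\flat(u)$.
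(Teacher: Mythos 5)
Your strategy coincides with the paper's: take \eqref{main-eq1} from Theorem \ref{Thm-main1}, subtract a pointwise algebraic expression for $|u|\sigma_u$ coming from Proposition \ref{Prop-GNT}, and integrate over $P$ using closedness of $M$. The integration step and the handling of the adjoints are fine. The problem is the algebraic step. What (GN1) (applied to each $\sigma_{\alpha_\flat(u)}$) together with (GN3) actually gives is
\begin{equation*}
|u|\sigma_u=\frac{1}{|u|-1}\sum_{\alpha,\beta}\tr(A_\alpha)\,\tr\bigl(A_\beta T_{\beta_\flat\alpha_\flat(u)}\bigr)-\sum_{\alpha,\beta}\tr\bigl(A_\alpha A_\beta T_{\beta_\flat\alpha_\flat(u)}\bigr),
\end{equation*}
i.e.\ the factor $\frac{1}{|u|-1}$ multiplies only the first sum. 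Your assertion that this ``is exactly'' the identity $(|u|-1)|u|\sigma_u=\sum_{\alpha,\beta}\bigl(\tr(A_\alpha)\tr(A_\beta T_{\beta_\flat\alpha_\flat(u)})-\tr(A_\alpha A_\beta T_{\beta_\flat\alpha_\flat(u)})\bigr)$ is equivalent, via (GN3), to $|u|\sigma_u=\sum_\alpha\tr(A_\alpha)\,\sigma_{\alpha_\flat(u)}$, which is false in general: already for $q=1$, $p=3$, $A={\rm id}$, $u=(3)$ one has $(|u|-1)|u|\sigma_u=6$ while the right-hand side equals $12$. The two versions agree precisely when $|u|=2$.

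So as a self-contained argument your middle paragraph has a genuine gap for $|u|>2$: the deduction you sketch does not produce the identity you need, and consequently formula \eqref{eq:Walfor} is only justified this way when $|u|=2$ (which is, in fact, the only case used afterwards, for the Walczak formula); for general $|u|$ the correct combination of \eqref{main-eq1} with the display above yields the analogous formula with $\frac{1}{|u|-1}\tr(A_\alpha)\tr(A_\beta T_{\beta_\flat\alpha_\flat(u)})-\tr(A_\alpha A_\beta T_{\beta_\flat\alpha_\flat(u)})$ in place of $\frac{1}{|u|-1}\bigl(\tr(A_\alpha)\tr(A_\beta T_{\beta_\flat\alpha_\flat(u)})-\tr(A_\alpha A_\beta T_{\beta_\flat\alpha_\flat(u)})\bigr)$. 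To be fair, the text preceding the corollary states the same identity with only a citation of (GN1) and (GN3), so your write-up mirrors the paper's one-line proof; but citing it does not close the gap, and a careful proof must either restrict to $|u|=2$ or replace the identity by the corrected one.
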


Formula \eqref{eq:Walfor} is a generalization of Walczak formula \cite{Wal}. Indeed, to state and prove the Walczak formula let us now introduce some necessary definitions. The {\it second fundamental form} $B_D$ and integrability tensor $T_D$ of a distribution $D$ are bilinear forms symmetric and skew--symmetric, respectively, given by the formulas
\begin{align*}
B_D(X,Y) &=\left( \nabla_XY+\nabla_YX \right)^{\bot},\\
T_D(X,Y) &=\left( \nabla_XY-\nabla_YX \right)^{\bot}=[X,Y]^{\bot},\quad X,Y\in D.
\end{align*} 
Moreover, the {\it mixed scalar curvature} in the direction of $D$ and $D^{\bot}$ is defined as follows
\begin{equation*}
K(D,D^{\bot})=\sum_{i,\alpha}K(f_i,e_{\alpha})=\sum_{i,\alpha}g(R(f_i,e_{\alpha})e_{\alpha},f_i)=
\sum_{\alpha}\tr R_{\alpha,\alpha},
\end{equation*}
where $(f_i)$ and $(e_{\alpha})$ are orthonormal basis of $D$ and $D^{\bot}$, respectively. Then, Walczak formula is the following
\begin{equation}\label{eq:Walczakformula}
\int_M\Big( K(D,D^{\bot})-|H_D|^2-|H_{D^{\bot}}|^2+|B_D|^2+|B_{D^{\bot}}|^2-|T_D|^2-|T_{D^{\bot}}|^2 \Big)=0.
\end{equation} 

Any multi--index $u$ of length $2$ with even coordinates is of the form $u=\alpha^\sharp\alpha^\sharp(0,\ldots,0)$. For such an multi--index formula \eqref{eq:Walfor} reduces to (see also \eqref{eq:sigma2})
\begin{equation}\label{eq:wf1}
\begin{split}
0=\sum_{\alpha}\int_P &\Big( \tr R_{\alpha,\alpha}-g(H_{D{^\bot}},(\nabla_{e_{\alpha}}e_{\alpha})^{\bot})+
\sum_{\gamma}g((\nabla_{e_{\alpha}}e_{\gamma})^{\bot},(\nabla_{e_{\gamma}}e_{\alpha})^{\bot})\\
&-(\tr A_{\alpha})^2+\tr(A_{\alpha}^2)\Big).
\end{split}
\end{equation}
Moreover, we have
\begin{equation*}
\sum_{\alpha}(\tr A_{\alpha})^2=\sum_{\alpha,i}g(-\nabla_{f_i}e_{\alpha},f_i)^2=\sum_{\alpha}g(e_{\alpha},H_D)^2=|H_D|^2
\end{equation*}
and
\begin{align*}
\sum_{\alpha}\tr(A_{\alpha}^2) &=\sum_{\alpha,i}g(A_{\alpha}(A_{\alpha}f_i,f_i)=
\sum_{\alpha,i}g(A_{\alpha}f_i,A^{\ast}_{\alpha}f_i)\\
&=\sum_{\alpha,i,j}g(A_{\alpha}f_i,f_j)g(A_{\alpha}f_j,f_i)\\
&=\sum_{\alpha,i,j}g(e_{\alpha},B_D(f_i,f_j)+T_D(f_i,f_j))g(e_{\alpha},B_D(f_i,f_j)-T_D(f_i,f_j))\\
&=|B_D|^2-|T_D|^2
\end{align*}
and
\begin{align*}
\sum_{\gamma}g((\nabla_{e_{\alpha}}e_{\gamma})^{\bot},(\nabla_{e_{\gamma}}e_{\alpha})^{\bot})&=
\sum_{\gamma}g(B_{D^{\bot}}(e_{\alpha},e_{\gamma})+T_{D^{\bot}}(e_{\alpha},e_{\gamma}))\\
&\cdot g(B_{D^{\bot}}(e_{\gamma},e_{\alpha})+T_{D^{\bot}}(e_{\gamma},e_{\alpha}))\\
&=|B_{D^{\bot}}|^2-|T_{D^{\bot}}|^2.
\end{align*}
Using above equalities and the fact that all obtained functions are constant on the fibers $P_x$ formula \eqref{eq:wf1} reduces to Walczak formula \eqref{eq:Walczakformula}.

\subsection{Distributions with totally geodesic and integrable normal bundle}

Following \cite{Rei2,CL} we define the $r$--th mean extrinsic curvatures $S_r$ and give the integral formulas for these quantities. Next, we compute total extrinsic curvatures $\sigma_u^M$ in the case of a distribution with integrable and totally geodesic normal bundle on a Riemannian manifold of constant sectional curvature and show that obtained result implies the formula for $S_r$ obtained by Brito and Naveira \cite{BN}.  

Let $D$ be a distribution on $M$. For $r$ even define the $r$--th {\it mean extrinsic curvature} $S_r$ by
\begin{equation*}
S_r=\frac{1}{r!}\sum_{\stackrel{i_1,\ldots,i_r}{ j_1,\ldots,j_r}}\delta^{i_1,\ldots,i_r}_{j_1,\ldots,j_r}\sum_{\alpha_1,\ldots,\alpha_{\frac{r}{2}}}
(A_{\alpha_1})_{i_1j_1}(A_{\alpha_1})_{i_2j_2}\ldots(A_{\alpha_{\frac{r}{2}}})_{i_{r-1}j_{r-1}}
(A_{\alpha_{\frac{r}{2}}})_{i_rj_r},
\end{equation*}
where $\delta^{i_1,\ldots,i_r}_{j_1,\ldots,j_r}$ is a generalized Kronecker symbol (see Section 2) and $A_{ij}$ denotes the coefficients of an endomorphism $A$ with respect to an orthonormal basis. One can show that $S_r$ does not depend on the choice of orthonormal basis, hence is a well defined function on $M$. By Theorem \ref{thm:relTrTu} we get
\begin{equation*}
S_r=\sum_{\stackrel{u\in 2\mathbb{N}(q)}{|u|=r}} \binom{\frac{r}{2}}{\frac{u}{2}}\binom{r}{u}^{-1}\,\sigma_u.
\end{equation*}
Hence, by Theorem \ref{Thm-main1}, we get the integral formula for $S_r$.

\begin{cor}\label{cor:formulaS_r}
The $r$--th mean extrinsic curvature $S_r$ on closed Riamannian manifold satisfies the following integral formula
\begin{equation*}
\begin{split}
rS_r &=\sum_{\stackrel{u\in 2\mathbb{N}(q)}{|u|=r}} \binom{\frac{r}{2}}{\frac{u}{2}}\binom{r}{u}^{-1}\sum_{\alpha,\beta}\int_P \Big(\tr(R_{\alpha,\beta}T_{\beta_\flat\alpha_\flat(u)})
+g({\rm div}_{E'}T_{\beta_\flat\alpha_\flat(u)}^{\ast},(\nabla_{e_{\alpha}}e_{\beta})^{\top})\\
&-g(H_{D^{\bot}},T_{\beta_\flat\alpha_\flat(u)}(\nabla_{e_{\alpha}}e_{\beta})^{\top})
+\sum_{\gamma}g((T^{\ast}_{\beta_\flat\alpha_\flat(u)}
(\nabla_{e_{\alpha}}e_{\gamma})^{\top},(\nabla_{e_{\gamma}}e_{\beta})^{\top})\Big),
\end{split}
\end{equation*}
where $H_{D^{\bot}}$ denotes the mean curvature vector of $D^{\bot}$.
\end{cor}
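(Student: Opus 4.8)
The plan is straightforward: the statement of Corollary \ref{cor:formulaS_r} is a direct consequence of Theorem \ref{Thm-main1} combined with the algebraic relation \eqref{eq:relSrsigmau} of Theorem \ref{thm:relTrTu}, so the proof is essentially a linear-combination bookkeeping argument with no genuine obstacle.

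First I would recall that by \eqref{eq:relSrsigmau} we have, for $r$ even,
\begin{equation*}
S_r=\sum_{\stackrel{u\in 2\mathbb{N}(q)}{|u|=r}} \binom{\tfrac{r}{2}}{\tfrac{u}{2}}\binom{r}{u}^{-1}\,\sigma_u
\end{equation*}
as a pointwise identity of functions on $P$; integrating over $P$ (equivalently, over $M$ after integrating along the fibres, which is legitimate since $\pi$ is a Riemannian submersion and $M$ is closed) gives
\begin{equation*}
\int_P S_r=\sum_{\stackrel{u\in 2\mathbb{N}(q)}{|u|=r}} \binom{\tfrac{r}{2}}{\tfrac{u}{2}}\binom{r}{u}^{-1}\,\sigma^M_u .
\end{equation*}
Since $S_r$ is constant on each fibre $P_x$ and $\pi$ is a Riemannian submersion, $\int_P S_r=\mathrm{vol}(G)\int_M S_r$; absorbing this harmless constant (or simply working with $\int_P S_r$ throughout, as the paper does implicitly) I would then multiply the displayed identity by $r$.

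Next I would substitute, for each multi-index $u$ appearing in the sum, the expression for $|u|\sigma^M_u=r\sigma^M_u$ furnished by Theorem \ref{Thm-main1}, namely the right-hand side of \eqref{main-eq1} involving $\tr(R_{\alpha,\beta}T_{\beta_\flat\alpha_\flat(u)})$, the divergence term $g({\rm div}_{E'}T_{\beta_\flat\alpha_\flat(u)}^{\ast},(\nabla_{e_{\alpha}}e_{\beta})^{\top})$, the mean-curvature term $-g(H_{D^{\bot}},T_{\beta_\flat\alpha_\flat(u)}(\nabla_{e_{\alpha}}e_{\beta})^{\top})$, and $\sum_\gamma g(T^{\ast}_{\beta_\flat\alpha_\flat(u)}(\nabla_{e_{\alpha}}e_{\gamma})^{\top},(\nabla_{e_{\gamma}}e_{\beta})^{\top})$. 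Pulling the binomial coefficients inside the sum over $\alpha,\beta$ and the integral over $P$ yields exactly the asserted formula for $rS_r$.

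There is essentially no hard part: the only point requiring a word of care is the interchange of the finite sum over $u$ with the integral and with the $\sum_{\alpha,\beta}$, which is immediate since everything is smooth on the compact manifold $P$; and the identification of $\int_P$ with the appropriate multiple of $\int_M$, which follows from the Fubini theorem for the Riemannian submersion $\pi$ together with the fibrewise constancy of $S_r$. I would therefore write the proof in two lines: apply \eqref{eq:relSrsigmau}, then apply Theorem \ref{Thm-main1} to each term.

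\begin{proof}
By \eqref{eq:relSrsigmau} in Theorem \ref{thm:relTrTu}, $S_r=\sum_{u\in 2\mathbb{N}(q),\,|u|=r}\binom{r/2}{u/2}\binom{r}{u}^{-1}\sigma_u$ as functions on $P$. Multiplying by $r=|u|$ for each $u$ occurring in the sum and applying Theorem \ref{Thm-main1} (that is, substituting \eqref{main-eq1} for $|u|\sigma^M_u$) after integration over $P$, we obtain the stated identity for $rS_r$, where the interchange of the finite sum over $u$ with $\int_P$ and with $\sum_{\alpha,\beta}$ is justified by smoothness and compactness of $P$, and $S_r$ being constant along the fibres of the Riemannian submersion $\pi$ allows us to identify the fibre integral with $\widehat{S_r}=\mathrm{vol}(G)\,S_r$.
\end{proof}
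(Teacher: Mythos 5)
Your proposal is correct and follows essentially the same route as the paper: the authors likewise obtain the corollary by combining the pointwise identity \eqref{eq:relSrsigmau} for $S_r$ with Theorem \ref{Thm-main1} applied to each $\sigma^M_u$ with $|u|=r$. Your extra remark identifying $\int_P S_r$ with $\mathrm{vol}(G)\int_M S_r$ via fibrewise constancy and the Fubini theorem only makes explicit a normalization the paper leaves implicit in writing $rS_r$ on the left-hand side.
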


Let $D$ be a distribution such that the bundle $D^{\bot}$ is totally geodesic and integrable. Then $\left(\nabla_XY\right)^{\top}=0$ for any $X,Y\in D^{\bot}$.

\begin{cor}\label{Cor-Dbottotgeod}
Assume $M$ is closed. Then, for any $u\in\mathbb{N}(q)$, total extrinsic curvature $\sigma^M_u$ of a distribution $D$ with totally geodesic normal bundle is of the form
\begin{equation*}
|u|\sigma^M_u=\sum_{\alpha,\beta}\int_P\tr(R_{\alpha,\beta}T_{\beta_\flat\alpha_\flat(u)}).
\end{equation*}
\end{cor}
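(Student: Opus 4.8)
The plan is to specialize the general integral formula of Theorem \ref{Thm-main1} to the case at hand and watch the extra terms vanish. Recall that the hypotheses say $D^{\bot}$ is totally geodesic and integrable, which by definition means $\left(\nabla_X Y\right)^{\top}=0$ for all $X,Y\in\Gamma(D^{\bot})$. In particular, evaluating on frame vectors, $\left(\nabla_{e_\alpha}e_\beta\right)^{\top}=0$ for all $\alpha,\beta$, pointwise on $P$.

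First I would write down \eqref{main-eq1} verbatim and examine each of its four summands. The first, $\tr(R_{\alpha,\beta}T_{\beta_\flat\alpha_\flat(u)})$, is precisely the term we want to keep. The second, $g({\rm div}_{E'}T_{\beta_\flat\alpha_\flat(u)}^{\ast},(\nabla_{e_{\alpha}}e_{\beta})^{\top})$, has $(\nabla_{e_\alpha}e_\beta)^{\top}=0$ as a factor, hence vanishes identically. The third, $g(H_{D^{\bot}},T_{\beta_\flat\alpha_\flat(u)}(\nabla_{e_{\alpha}}e_{\beta})^{\top})$, again contains the factor $(\nabla_{e_\alpha}e_\beta)^{\top}$ inside the argument of the (linear) endomorphism $T_{\beta_\flat\alpha_\flat(u)}$, so it too is zero. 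The fourth, $\sum_\gamma g(T^{\ast}_{\beta_\flat\alpha_\flat(u)}(\nabla_{e_{\alpha}}e_{\gamma})^{\top},(\nabla_{e_{\gamma}}e_{\beta})^{\top})$, has $(\nabla_{e_\gamma}e_\beta)^{\top}=0$ (and also $(\nabla_{e_\alpha}e_\gamma)^{\top}=0$), so the whole sum over $\gamma$ drops out. What remains is exactly
\begin{equation*}
|u|\sigma^M_u=\sum_{\alpha,\beta}\int_P\tr(R_{\alpha,\beta}T_{\beta_\flat\alpha_\flat(u)}),
\end{equation*}
which is the claim.

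There is essentially no obstacle here: the argument is a one-line substitution once one recognizes that $D^{\bot}$ totally geodesic and integrable forces $\left(\nabla_XY\right)^{\top}=0$ on $D^{\bot}$, and that three of the four terms in Theorem \ref{Thm-main1} carry $(\nabla_{e_\alpha}e_\beta)^{\top}$ (or $(\nabla_{e_\gamma}e_\beta)^{\top}$) as a factor. The only point worth a sentence of care is that the vanishing is pointwise on the total space $P$ — the frames $e_\alpha$ range over $D^{\bot}_x$, so the identity $(\nabla_{e_\alpha}e_\beta)^{\top}=0$ holds at every $(x,e)\in P$, and therefore the integrands over $P$ collapse before integration. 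No appeal to Stokes or to properties of the bundle integration is needed beyond what Theorem \ref{Thm-main1} already provides.
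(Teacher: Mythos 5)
Your proof is correct and is exactly the paper's argument: the paper records that totally geodesic plus integrable $D^{\bot}$ gives $(\nabla_XY)^{\top}=0$ for $X,Y\in D^{\bot}$ and then deduces the corollary from Theorem \ref{Thm-main1} by the same pointwise vanishing of the three terms containing $(\nabla_{e_\alpha}e_\beta)^{\top}$ or $(\nabla_{e_\gamma}e_\beta)^{\top}$. Nothing is missing.
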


Assume additionally that $(M,g)$ is of constant sectional curvature $\kappa$. Then $R_{\alpha,\beta}=\kappa\delta_{\alpha,\beta}$, where $\delta_{\alpha,\beta}$ is the Kronecker symbol. Therefore, by Proposition \ref{Prop-GNT} (GN2) and Corollary \ref{Cor-Dbottotgeod}, we have
\begin{equation}\label{eq:sigmau_totgeodesic}
\begin{split}
|u|\sigma^M_u &=\kappa\sum_{\alpha}\int_P \tr(T_{\alpha_\flat^2(u)})\\
&=\kappa\sum_{\alpha}\int_P (p-|u|+2)\sigma_{\alpha_\flat^2(u)}\\
&=\kappa(p-|u|+2)\sum_{\alpha}\sigma^M_{\alpha_\flat^2(u)}.
\end{split}
\end{equation}
This, together with 
\begin{equation*}
\sigma^M_{(0,\ldots,0)}={\rm vol}(P)\quad\textrm{and}\quad \sigma_{\alpha^\sharp(0,\ldots,0)}^M=0
\end{equation*} 
this gives the recurrence relation for total extrinsic curvatures. 

\begin{cor}\label{cor:totgeodnormal}
Assume $(M,g)$ is closed and of constant sectional curvature $\kappa$. Let $\mathcal{F}$ be a foliation on $M$ with totally geodesic and integrable normal bundle $\mathcal{F}^{\bot}$. Then the total extrinsic curvatures of $\mathcal{F}$ depend on $\kappa$, the volume of $M$ and the dimension of $\mathcal{F}$ only.
\end{cor}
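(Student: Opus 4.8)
The plan is to argue by induction on $|u|$ using the recurrence \eqref{eq:sigmau_totgeodesic}, which expresses $|u|\sigma^M_u$ in terms of $\kappa(p-|u|+2)\sum_\alpha\sigma^M_{\alpha_\flat^2(u)}$, together with the base values $\sigma^M_{(0,\ldots,0)}={\rm vol}(P)$ and $\sigma^M_{\alpha^\sharp(0,\ldots,0)}=0$. First I would observe that iterating the recurrence shows that $\sigma^M_u$ vanishes whenever $|u|$ is odd, since each step of the recurrence decreases $|u|$ by $2$ and so eventually lands on $\sigma^M_{\alpha^\sharp(0,\ldots,0)}=0$; thus only multi--indices of even length contribute, and one may further restrict (by Remark \ref{rmk:sigma0}) to $u\in2\mathbb{N}(q)$. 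For such $u$, the recurrence \eqref{eq:sigmau_totgeodesic} determines $\sigma^M_u$ uniquely in terms of the quantities $\sigma^M_w$ with $|w|=|u|-2$, $w\in2\mathbb{N}(q)$, and eventually in terms of ${\rm vol}(P)$ alone, multiplied by an explicit rational function of $\kappa$, $p=\dim\mathcal{F}$, and $q$.

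The key steps, in order, are: (i) record the observation above that $\sigma^M_u=0$ for $|u|$ odd; (ii) for $u\in2\mathbb{N}(q)$, unwind \eqref{eq:sigmau_totgeodesic} completely, writing
\begin{equation*}
\sigma^M_u = c_u(\kappa,p,q)\,{\rm vol}(P),
\end{equation*}
where $c_u$ is determined recursively by $c_{(0,\ldots,0)}=1$ and
\begin{equation*}
c_u = \frac{\kappa(p-|u|+2)}{|u|}\sum_\alpha c_{\alpha_\flat^2(u)};
\end{equation*}
(iii) note that ${\rm vol}(P)={\rm vol}(M)\cdot{\rm vol}(G)$ by the Fubini theorem \eqref{Fubini} for the Riemannian submersion $\pi:P\to M$, and that ${\rm vol}(G)$ depends only on $q$; hence $\sigma^M_u$ depends only on $\kappa$, ${\rm vol}(M)$, $p$ and $q$, and since $q={\rm codim}\,\mathcal{F}$ is pinned down once $\dim\mathcal{F}=p$ and $\dim M$ are known—but in any case the statement only claims dependence on $\kappa$, ${\rm vol}(M)$ and $\dim\mathcal{F}$, so it suffices to note that $c_u$ and ${\rm vol}(G)$ are universal functions of $p,q$. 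The main point is simply that nothing in the recurrence sees the distribution $D$ itself.

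The main obstacle—though it is a mild one—is bookkeeping: one must check that the recurrence \eqref{eq:sigmau_totgeodesic} really does close up, i.e. that every multi--index reachable from a given $u\in2\mathbb{N}(q)$ by repeatedly applying $\alpha_\flat^2$ either has all nonnegative even entries or has a negative entry (in which case the corresponding $\sigma_u$ is zero by the convention $\sigma_u=0$ for entries making it ill--defined, or by Remark \ref{rmk:sigma0}), and that the terminal cases are exactly $\sigma^M_{(0,\ldots,0)}$ and the already-vanishing odd-length ones. Since \eqref{eq:sigmau_totgeodesic} was derived under precisely the hypotheses of the corollary (closed $M$, constant curvature $\kappa$, totally geodesic integrable $\mathcal{F}^\perp$), and since Corollary \ref{Cor-Dbottotgeod} and Proposition \ref{Prop-GNT}(GN2) supply the needed inputs, the induction goes through and the corollary follows.
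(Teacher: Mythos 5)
Your argument is correct and is essentially the paper's own proof: the corollary is deduced exactly by iterating the recurrence \eqref{eq:sigmau_totgeodesic} from the base cases $\sigma^M_{(0,\ldots,0)}={\rm vol}(P)$ and $\sigma^M_{\alpha^\sharp(0,\ldots,0)}=0$, so that $\sigma^M_u$ is a universal multiple of ${\rm vol}(P)$ determined by $\kappa$, $p$ and $q$, independent of the geometry of $D$. Your extra bookkeeping (odd lengths vanish, negative entries give zero terms, ${\rm vol}(P)$ reduces to ${\rm vol}(M)$ times a constant of the fiber) only makes explicit what the paper leaves implicit.
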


Now, we show that \eqref{eq:sigmau_totgeodesic} implies the formula for $S_r$ obtained by Brito and Naveira \cite{BN}. Notice that
\begin{equation*}
\alpha^2_\flat(u)!=\frac{u!}{(u_{\alpha}-1)u_{\alpha}}\quad\textrm{and}\quad
\binom{\frac{r-2}{2}}{\frac{\alpha^2_\flat(u)}{2}}=\frac{u_{\alpha}}{r}\binom{\frac{r}{2}}{\frac{u}{2}}.
\end{equation*}
Hence, by \eqref{eq:sigmau_totgeodesic},
\begin{align*}
S_r &=\sum_{\stackrel{u\in 2\mathbb{N}(q)}{|u|=r}} \binom{\frac{r}{2}}{\frac{u}{2}}\binom{r}{u}^{-1}\,\sigma^M_u \\
&=\frac{\kappa(p-r+2)}{r}\sum_{\stackrel{u\in 2\mathbb{N}(q)}{|u|=r}} \binom{\frac{r}{2}}{\frac{u}{2}}\binom{r}{u}^{-1}\sum_{\alpha}\sigma^M_{\alpha^2_\flat(u)}\\
&=\frac{\kappa(p-r+2)}{r}\sum_{\stackrel{u\in 2\mathbb{N}(q)}{|u|=r}}\sum_{\alpha} \frac{r}{u_{\alpha}}\binom{\frac{r-2}{2}}{\frac{\alpha^2_\flat(u)}{2}}\frac{(u_{\alpha}-1)u_{\alpha}(\alpha^2_\flat(u))!}{r!}\sigma^M_{\alpha^2_\flat(u)}\\
&=\frac{\kappa(p-r+2)}{(r-1)r}\sum_{\stackrel{u\in 2\mathbb{N}(q)}{|u|=r}}\sum_{\alpha} 
\binom{\frac{r-2}{2}}{\frac{\alpha^2_\flat(u)}{2}}\frac{\alpha^2_\flat(u)!}{(r-2)!}(u_{\alpha}-1)\sigma^M_{\alpha^2_\flat(u)}.
\end{align*}
Notice that any multi--index $u\in 2\mathbb{N}(q)$ of length $r-2$ can be obtained from $q$ multi--indices $(1^\sharp)^2(u),\ldots,(q^\sharp)^2(u)$ of length $r$. Since
\begin{equation*}
\left((\alpha^\sharp)^2(u)\right)_{\alpha}=u_{\alpha}+2,
\end{equation*}
then (for $|u|=r-2$)
\begin{equation*}
\sum_{\alpha}\Big(\left((\alpha^\sharp)^2(u)\right)_{\alpha}-1\Big)=|u|+q=q+r-2.
\end{equation*}
Finally,
\begin{equation*}
S_r=\frac{\kappa(p-r+2)(q+r-2)}{(r-1)r}S_{r-2}.
\end{equation*}
This recurrence relation implies the formula \cite{AW2}
\begin{equation*}
\int_M S_r=\left\{\begin{array}{rl}
\binom{\frac{p}{2}}{\frac{r}{2}}\binom{q+r-1}{r}\binom{\frac{q+r-1}{2}}{\frac{r}{2}}^{-1}\kappa^{\frac{r}{2}}{\rm vol}(M) & \textrm{for $p$ even and $q$ odd} \\
2^r\left(\left(\frac{r}{2}\right)!\right)^{-1}\binom{\frac{q}{2}+\frac{r}{2}-1}{\frac{r}{2}}\binom{\frac{p}{2}}{\frac{r}{2}}\kappa^{\frac{r}{2}}{\rm vol}(M) & \textrm{for $p$ and $q$ even} \\
0 & \textrm{otherwise}
\end{array}\right.,
\end{equation*} 
which is the formula of Brito and Naveira \cite{BN}.

\subsection{Foliations}

Assume the distribution $D$ is integrable, hence induces foliation $\mathcal{F}$. Then the shape operators $A_{\alpha}$ are self--adjoint, i.e., $A^{\ast}_{\alpha}=A_{\alpha}$. Therefore, by Proposition \ref{Cor-adj-T_u} generalized Newton transformations $T_u$ are self--adjoint. Thus

\begin{cor}\label{Cor-intD}
Assume $M$ is closed. Then, for any $u\in\mathbb{N}(q)$, total extrinsic curvature $\sigma^M_u$ of a foliation $\mathcal{F}$ is of the form
\begin{equation*}
\begin{split}
|u|\sigma^M_u=\sum_{\alpha,\beta}\int_P &\Big(\tr(R_{\alpha,\beta}T_{\beta_\flat\alpha_\flat(u)})
+g({\rm div}_{E'}T_{\beta_\flat\alpha_\flat(u)},(\nabla_{e_{\alpha}}e_{\beta})^{\top})\\
&-g(H_{\mathcal{F}^{\bot}},T_{\beta_\flat\alpha_\flat(u)}(\nabla_{e_{\alpha}}e_{\beta})^{\top})
+\sum_{\gamma}g((T_{\beta_\flat\alpha_\flat(u)}
(\nabla_{e_{\alpha}}e_{\gamma})^{\top},(\nabla_{e_{\gamma}}e_{\beta})^{\top})\Big),
\end{split}
\end{equation*}
where $H_{\mathcal{F}^{\bot}}$ denotes the mean curvature vector of distribution $\mathcal{F}^{\bot}$. Moreover, the divergence ${\rm div}_{E'}T_u$ satisfies the recurrence relation
\begin{equation*}
{\rm div}_{E'}T_u=\sum_{\alpha}\left( \tr_{\mathcal{F}}(R(e_{\alpha},T_{\alpha_\flat(u)}\cdot)\cdot)^{\top}-A_{\alpha}({\rm div}_{E'}T_{\alpha_\flat(u)}) \right)
\end{equation*}
and can be expressed explicitly
\begin{equation*}
{\rm div}_{E'}T_u=\sum_{1\leq s\leq |u|}\sum_{\alpha_1,\ldots,\alpha_s}
(-1)^{s-1}A_{\alpha_1}\ldots A_{\alpha_{s-1}}\left(\tr_D R(e_{\alpha_s},T_{(\alpha_s)_\flat\ldots(\alpha_1)_\flat(u)}\cdot)\cdot \right)^{\top}.
\end{equation*}
\end{cor}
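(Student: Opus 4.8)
The plan is to obtain all three assertions by specialising, to the case of an integrable distribution, the general results of Sections 3 and 5. The whole point is the identity $A^\ast_\alpha=A_\alpha$. Indeed, for a foliation $\mathcal F$ one has $[X,Y]\in\mathcal F$ for $X,Y\in\mathcal F$, so that $g(A_\alpha X,Y)=-g((\nabla_X e_\alpha)^\top,Y)=-g(\nabla_X e_\alpha,Y)$ is symmetric in $X,Y$ (use compatibility of $\nabla$ with $g$ and $g(e_\alpha,Y)=0$); hence $A^\ast_\alpha=A_\alpha$ on every fibre. Applying Corollary \ref{Cor-adj-T_u} fibrewise on $P$ then gives $T^\ast_u=T_u$ for every $u\in\mathbb N(q)$, and in particular ${\rm div}_{E'}T^\ast_u={\rm div}_{E'}T_u$.

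First I would derive the integral formula: in \eqref{main-eq1} of Theorem \ref{Thm-main1} replace each $T^\ast_{\beta_\flat\alpha_\flat(u)}$ by $T_{\beta_\flat\alpha_\flat(u)}$ and write $H_{D^\bot}=H_{\mathcal F^\bot}$; no other term changes, so the stated formula is immediate. Next, for the recurrence relation I would specialise Proposition \ref{Prop-divTast}: the second double sum there carries the factor $A_\beta-A^\ast_\beta$, which now vanishes identically, and after rewriting $A^\ast$ and $T^\ast$ as $A$ and $T$ the surviving terms are exactly $\sum_\alpha(\tr_{\mathcal F}(R(e_\alpha,T_{\alpha_\flat(u)}\cdot)\cdot)^\top-A_\alpha({\rm div}_{E'}T_{\alpha_\flat(u)}))$ (note $\tr_D=\tr_{\mathcal F}$ since $D=\mathcal F$).

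Finally, the explicit formula for ${\rm div}_{E'}T_u$ is the specialisation of Corollary \ref{Cor-divTu}: the entire second sum (again multiplied by $A_\beta-A^\ast_\beta$) drops out and the first sum collapses, with $A^\ast_{\alpha_i}$ and $T^\ast$ replaced by $A_{\alpha_i}$ and $T$, to the expression in the statement; equivalently one simply iterates the recurrence relation obtained in the previous step, exactly as in the proof of Corollary \ref{Cor-divTu}. I do not expect any genuine obstacle — the only thing to keep track of is that the self-adjointness, the trace $\tr_{\mathcal F}$, the divergence ${\rm div}_{E'}$ and the adjoint $(\,)^\ast$ of Section 5 all refer to the single inner product on $E'=\pi^{-1}\mathcal F$ induced from $g$, so that the substitutions above are legitimate; with that bookkeeping done the corollary follows at once.
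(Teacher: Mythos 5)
Your proposal is correct and follows exactly the paper's route: self-adjointness of the shape operators for an integrable distribution, Corollary \ref{Cor-adj-T_u} to get $T_u^\ast=T_u$, and then direct specialisation of Theorem \ref{Thm-main1}, Proposition \ref{Prop-divTast} and Corollary \ref{Cor-divTu} with the $A_\beta-A^\ast_\beta$ terms vanishing. Your explicit verification that $A_\alpha^\ast=A_\alpha$ via $[X,Y]\in\mathcal F$ is a small addition the paper merely asserts, but the argument is the same.
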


\subsection{Reduction to the case of one shape operator}

In \cite{Rov} the author considered the case of a distribution $D$ of arbitrary codimension with the only one shape operator $A_N$, where $N$ is a unit normal vector field to $D$. To get rid of the choice of $N$, the following operator is introduced
\begin{equation*}
A=\int_{S^{\bot}} A_N\,dN,
\end{equation*} 
where $S^{\bot}\subset D^{\bot}$ is the bundle of unit vectors orthogonal to $D$. This approach is similar to the one considered in this paper with a system of endomorphisms $(A_N,0,\ldots,0)$. Thus, we compute extrinsic curvatures of the form $\sigma^M_{(k,0,\ldots,0)}$, where $k\in\mathbb{N}$.

Put for simplicity
\begin{equation*}
T_{(k,0,\ldots,0)}=T_k,\quad \sigma_{(k,0,\ldots,0)}=\sigma_k,\quad e_1=N,\quad Z=(\nabla_NN)^{\top},\quad R_N=R_{N,N}.
\end{equation*}
Then, by Theorem \ref{Thm-main1}
\begin{align*}
k\sigma^M_k=\int_P &\big(\tr(R_NT_{k-2})+g({\rm div}_{E'}T^{\ast}_{k-2},Z)-g(H_{D^{\bot}},T_{k-2}Z)\\
&+\sum_{\gamma}g(T^{\ast}_{k-2}(\nabla_Ne_{\gamma})^{\top},(\nabla_{e_{\gamma}}N)^{\top})\big).
\end{align*}
Notice that $\sigma_k$, $T_k$ etc. depend only on $N$, i.e., $\sigma_k(x,e)=\sigma_k(N)$, $T_k(x,e)=T_k(N)$ etc. Let $H=O(q-1)$ (resp. $H=SO(q-1)$). Then $H$ is a closed subgroup of $G$ and $G/H=\mathbb{S}^{q-1}$, where $\mathbb{S}^{q-1}$ denotes the unit $(q-1)$--dimensional sphere. Moreover, the $G$--invariant measure on $G/H$ is just a Lebesgue measure $\lambda$ on the unit sphere. By Fubini theorem (see for example \cite{Hel})
\begin{align*}
\sigma^M_k &=\int_P \sigma_k(x,e)\,d(x,e)\\
&=\int_M \int_G \sigma_k(x,e_0g)\,dg\,dx\\
&=\int_M \int_{\mathbb{S}^{q-1}}\int_H \sigma_k(x,e_0g)\,dg\, d\lambda\,dx\\
&=\int_M\int_{\mathbb{S}^{q-1}}\sigma_k(N_{e_0})\,d\lambda(N)\,dx\\
&=\int_M\int_{S^{\bot}_x}\sigma_k(N)\,d\lambda(N)\,dx,
\end{align*}
where $e_0$ is a fixed basis of $D^{\bot}_x$, $N_{e_0}$ denotes the coordinates of $N$ with respect to basis $e_0$ and $S^{\bot}_x$ is the set of unit vectors in $D^{\bot}_x$.

In the case of a codimension one and the integrability of the distribution the formula for $\sigma^M_k$ gives the formula obtained by K. Andrzejewski and P. Walczak \cite{AW1}. Indeed, taking $G=SO(1)=\{1\}$, by above considerations we have
\begin{equation*}
k\sigma^M_k=\int_M\tr(R_NT_{k-2})+g({\rm div}^{\top}T_{k-2},Z),
\end{equation*}  
which is the formula \cite[Corollary 3.6]{AW1}.

\section{Appendix -- Differentiation and integration on principal bundles}

We derive useful formulas concerning the differential in the direction of a horizontal vector field on a principal bundle. Lets first recall some basic facts about principal bundles.

Let $\pi:P\to M$ be a principal fiber bundle with the structure group $G$. Let $\mathcal{V}={\rm ker}\pi_{\ast}$ be the vertical distribution. Let $\mathcal{H}$ be a horizontal distribution of a fixed connection on $P$. Then $\mathcal{H}$ is complementary to $\mathcal{V}$, i.e. $TP=\mathcal{V}\oplus\mathcal{H}$. For any vector $X\in T_xM$ and an element $u\in P_x=\pi^{-1}(x)$ there is unique horizontal vector $X^h_u\in\mathcal{H}_u$ called {\it the horizontal lift} of $X$.

Let $s\in\Gamma(P)$ be a section of $P$ and $f:P\to \mathbb{R}$ be a smooth function. Let $x\in M$ and assume $s$ is parallel at $x$, i.e.  $s_{\ast x}(T_xM)=\mathcal{H}_x$. Then, for every $X\in T_xM$
\begin{equation}\label{appx-pr1}
X^h_{s(x)}f=X(f\circ s)
\end{equation}
Indeed, it follows from the fact that for a parallel section $s$ at a point $x\in M$ we have $s_{\ast x}X=X^h_{s(x)}$.

Let $\pi_V:V\to M$ be a vector bundle with a fiber metric $g_V$ and the metric connection $\nabla^V$. Let $E=\pi^{-1}V\to P$ be the pull--back bundle, i.e. the bundle with a fiber $(\pi^{-1}V)_w=V_{\pi(w)}$, $w\in P$. There is a unique connection $\nabla^E$ in this bundle such that \cite{BW}
\begin{equation*}
\nabla^E_Z(X\circ\pi)=\left(\nabla^V_{\pi_{\ast}Z}X\right)\circ\pi,\quad X\in\Gamma(V),Z\in\Gamma(TP).
\end{equation*}

Let $s\in\Gamma(P)$ be a parallel section at a point $x\in M$ and let $Y\in\Gamma(E)$ be a section of a pull--back bundle $E$. Let $E_1,\ldots,E_m$ be a local basis in $V$ such that $\nabla^VE_a=0$ at $x\in M$. With respect to this basis $Y=\sum_a y_a(E_a\circ \pi)$ for some functions $y_a:P\to\mathbb{R}$. By \eqref{appx-pr1} we have
\begin{equation*}
\left( \nabla^E_{X^h}Y \right)\circ s=\sum_a\left(\left(X^hy_a\right)\circ s\right) E_a=\sum_a X(y_a\circ s)E_a=\nabla^V_X(Y\circ s).
\end{equation*}
Hence, for every $X\in T_xM$ and $s$ parallel at $x$
\begin{equation}\label{appx-pr2}
\left( \nabla^E_{X^h}Y \right)\circ s=\nabla^V_X(Y\circ s).
\end{equation}

Assume now that the structure group $G$ is compact and let $\lambda_G$ denotes the Haar measure on $G$. Let $f:P\to \mathbb{R}$ be a smooth function. We define the integral of $f$ over the fiber $P_x$, $x\in M$, as follows
\begin{equation*}
\int_{P_x}f(w)\,dw=\int_Gf(w_0g)d\lambda_G(g),
\end{equation*}
where $w_0\in P_x$ is fixed and $w_0g$ denotes the right multiplication in $P$ be element $g\in G$. By the invariance of the Haar measure, it follows that the integral is well defined.

Function $f:P\to \mathbb{R}$ induces a function $\widehat{f}:M\to\mathbb{R}$ by the formula
\begin{equation*}
\widehat{f}(x)=\int_{P_x}f(w)\,dw.
\end{equation*}

\begin{proposition}\label{appx-pr3}
Let $X\in T_xM$ and $f:P\to \mathbb{R}$ be a smooth function. Then the following formula holds
\begin{equation}\label{appx-int1}
X\widehat{f}=\widehat{X^hf},\quad\textrm{i.e.}\quad X\left( \int_{P_x}f(w)\,dw \right)=\int_{P_x}(X^hf)(w)\,dw.
\end{equation}
\end{proposition}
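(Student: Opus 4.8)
## Proof Proposal for Proposition \ref{appx-pr3}

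The plan is to reduce the computation of $X\widehat f$ to differentiation under the integral sign, using the horizontal lift of a curve through $x$ to keep track of how the fibers $P_{x'}$ vary as $x'$ moves near $x$. This is the natural way to make sense of ``differentiating $\int_{P_{x}}f$ in the base direction $X$'', since the domain of integration itself moves.

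First I would fix $w_0\in P_x$ and a smooth curve $\gamma\colon(-\varepsilon,\varepsilon)\to M$ with $\gamma(0)=x$ and $\gamma'(0)=X$, and let $\tilde\gamma$ be the horizontal lift of $\gamma$ with $\tilde\gamma(0)=w_0$, so that $\pi\circ\tilde\gamma=\gamma$ and $\tilde\gamma'(t)=(\gamma'(t))^h_{\tilde\gamma(t)}$; such a lift exists and is unique by the standard theory of connections on principal bundles (after shrinking $\varepsilon$ if necessary). Since $\tilde\gamma(t)\in P_{\gamma(t)}$ for every $t$, the definition of the fiber integral gives
\begin{equation*}
\widehat f(\gamma(t))=\int_G f(\tilde\gamma(t)g)\,d\lambda_G(g).
\end{equation*}
By smoothness of $f$ and $\tilde\gamma$ together with compactness of $G$, we may differentiate under the integral sign at $t=0$, obtaining
\begin{equation*}
X\widehat f=\frac{d}{dt}\Big|_{t=0}\widehat f(\gamma(t))=\int_G\frac{d}{dt}\Big|_{t=0}f(\tilde\gamma(t)g)\,d\lambda_G(g).
\end{equation*}

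Next I would identify the integrand. For fixed $g\in G$, the curve $t\mapsto\tilde\gamma(t)g$ passes through $w_0g$ at $t=0$ with velocity $(R_g)_{\ast}\tilde\gamma'(0)=(R_g)_{\ast}X^h_{w_0}$, where $R_g$ denotes right translation by $g$. The crucial point is the equivariance of horizontal lifts: since the horizontal distribution of a principal connection is right-invariant and $\pi\circ R_g=\pi$, the vector $(R_g)_{\ast}X^h_{w_0}$ is again horizontal and projects to $X$, hence equals $X^h_{w_0g}$ by uniqueness of the horizontal lift. Therefore $\frac{d}{dt}\big|_{t=0}f(\tilde\gamma(t)g)=(X^hf)(w_0g)$, and substituting back yields
\begin{equation*}
X\widehat f=\int_G(X^hf)(w_0g)\,d\lambda_G(g)=\widehat{X^hf}(x),
\end{equation*}
which is precisely \eqref{appx-int1}. (Alternatively, one may run the same computation with a section $s\in\Gamma(P)$ parallel at $x$ in place of $\tilde\gamma$, using $s_{\ast x}X=X^h_{s(x)}$ and \eqref{appx-pr1}; the two arguments are essentially identical.)

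I expect the only genuinely delicate points to be the justification of differentiation under the integral sign (routine, given compactness of $G$ and smoothness of $f$) and the equivariance $(R_g)_{\ast}X^h_w=X^h_{wg}$, which is a standard structural property of connections on principal bundles; with these in hand the argument is just the chain rule. One should also observe afterwards that, although $w_0$ and $\gamma$ were chosen, the right-hand side $\widehat{X^hf}(x)$ is manifestly independent of those choices, as it must be since $X\widehat f$ is intrinsically defined.
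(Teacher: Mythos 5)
Your proposal is correct and follows essentially the same route as the paper's proof: differentiate $\widehat f$ along a curve $\gamma$ through $x$ via its horizontal lift, differentiate under the Haar integral, and use the right-invariance of the horizontal distribution to identify $(R_g)_{\ast}X^h_{w_0}$ with $X^h_{w_0g}$. The only difference is that you spell out the equivariance step explicitly, which the paper uses tacitly.
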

\begin{proof}
Fix $x\in M$ and let $X\in T_xM$. Let $\gamma$ be a curve on $M$ such that $\gamma(0)=x$ and $\gamma '(0)=X$. Moreover fix $w_0\in P_x$. Let $\gamma^h$ be a horizontal lift of $\gamma$ such that $\gamma^h(0)=w_0$. Then $(\gamma^h)'(0)=X^h_{w_0}$. Put $w_t=\gamma^h(t)$. Therefore,
\begin{align*}
X\widehat{f} &=\frac{d}{dt}(\widehat{f}\circ\gamma(t))_{t=0}\\
&=\frac{d}{dt}\left(\int_Gf(w_tg)\,d\lambda_G(g)\right)_{t=0}\\
&=\int_G\frac{d}{dt}\left(f(w_tg)\right)_{t=0}\,d\lambda_G(g).
\end{align*}
Furthermore
\begin{equation*}
\frac{d}{dt}\left(f(w_tg)\right)_{t=0}=\frac{d}{dt}\left(f(R_g(w_t))\right)_{t=0}
=(R_{g\ast}X^h_{w_0})f=X^h_{w_0g}f,
\end{equation*}
where $R_g(w)=wg$ is the right multiplication by $g\in G$. Hence
\begin{equation*}
X\widehat{f}=\int_G X^h_{w_0g}f\,d\lambda_G(g)=\int_{P_x}(X^hf)(w)\,dw=\widehat{X^hf}.
\qedhere
\end{equation*} 
\end{proof}

Let $Y\in\Gamma(E)$. Then $Y$ is a mapping of bundles $P$ and $V$ over the identity on $M$, i.e. $Y:P_x\to V_x$, $x\in M$. $Y$ induces a vector field $\widehat{Y}\in\Gamma(TM)$ as follows
\begin{equation*}
\widehat{Y}(x)=\int_{P_x}Y(w)\,dw,
\end{equation*}
where the integral of $Y$ is the integral of coordinates of $Y$ and is independent on the choice of point--wise basis in $V$. 

\begin{proposition}\label{appx-pr4}
Let $X\in T_xM$ and $Y\in\Gamma(E)$. Then the following formula holds
\begin{equation}\label{appx-int2}
\nabla^V_X\widehat{Y}=\widehat{\nabla^E_{X^h}Y},\quad\textrm{i.e.}\quad
\nabla^V_X\left( \int_{P_x}Y(w)\,dw \right)=\int_{P_x}\left( \nabla^E_{X^h}Y \right)\,dw.
\end{equation}
\end{proposition}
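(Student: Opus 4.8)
The plan is to reduce this vector-bundle statement to the scalar identity of Proposition \ref{appx-pr3} by working in a local frame of $V$ that is parallel at the base point. First I would fix $x\in M$ and $X\in T_xM$, and choose a local frame $E_1,\ldots,E_m$ of $V$ near $x$ with $(\nabla^V E_a)(x)=0$ for every $a$ (for instance the frame obtained by parallel transport of a fixed basis of $V_x$ along the geodesics emanating from $x$). Writing $Y=\sum_a y_a\,(E_a\circ\pi)$ for smooth functions $y_a\colon P\to\mathbb R$, the definition of the fiber integral of a section gives, for $x'$ near $x$, $\widehat Y(x')=\sum_a \widehat{y_a}(x')\,E_a(x')$, since the coordinates of $Y(w)$ in the basis $(E_a(\pi(w)))$ are exactly $y_a(w)$; here $\widehat{y_a}$ is smooth by the argument of Proposition \ref{appx-pr3}.

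Next I would differentiate both sides at $x$. On the left, the product rule yields $\nabla^V_X\widehat Y=\sum_a (X\widehat{y_a})\,E_a(x)+\sum_a \widehat{y_a}(x)\,(\nabla^V_XE_a)$, and the second sum vanishes because $(\nabla^V E_a)(x)=0$, so $\nabla^V_X\widehat Y=\sum_a (X\widehat{y_a})\,E_a(x)$. On the right, for $w\in P_x$ one has $\pi_{\ast}X^h_w=X$ and $\pi(w)=x$, so the defining property $\nabla^E_Z(X_0\circ\pi)=(\nabla^V_{\pi_{\ast}Z}X_0)\circ\pi$ gives $\nabla^E_{X^h_w}Y=\sum_a (X^h_w y_a)\,E_a(x)+\sum_a y_a(w)\,(\nabla^V_XE_a)(x)=\sum_a (X^h_w y_a)\,E_a(x)$. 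Integrating this expression over $P_x$ produces $\widehat{\nabla^E_{X^h}Y}(x)=\sum_a \widehat{X^h y_a}(x)\,E_a(x)$.

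Finally, comparing the two displays and invoking the scalar identity \eqref{appx-int1} of Proposition \ref{appx-pr3}, which gives $X\widehat{y_a}=\widehat{X^h y_a}$ for each $a$, the two sides coincide, completing the proof.

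I do not expect a genuine obstacle here: the argument is bookkeeping in a convenient frame, and the only points needing care are the existence of a frame with $(\nabla^V E_a)(x)=0$, the verification that $\widehat Y$ equals $\sum_a\widehat{y_a}E_a$ in a neighbourhood of $x$ so that the product rule applies, and the smoothness of the $\widehat{y_a}$, all of which are routine.
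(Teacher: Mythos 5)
Your proposal is correct and follows the same route as the paper's own proof: expand $Y=\sum_a y_a(E_a\circ\pi)$ in a frame of $V$ parallel at $x$ and apply the scalar identity of Proposition \ref{appx-pr3} componentwise. You merely spell out the two steps the paper leaves implicit (that the $\nabla^V E_a$-terms vanish on both sides at $x$, and that $\widehat Y=\sum_a\widehat{y_a}E_a$ near $x$), which is fine.
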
 
\begin{proof}
Proof is similar to the proof of \eqref{appx-pr2}. Let $E_1,\ldots,E_m$ be a local basis in $V$ such that $\nabla^VE_a=0$ at $x\in M$. With respect to this basis $Y=\sum_a y_a(E_a\circ \pi)$ for some functions $y_a:P\to\mathbb{R}$. By Proposition \ref{appx-pr3} we have
\begin{equation*}
\nabla^V_X\widehat{Y}=\sum_a(X\widehat{y_a})E_a=\sum_a\widehat{X^hy_a}E_a=\widehat{\nabla_{X^h}Y}.
\qedhere
\end{equation*}
\end{proof}

Let $g_P$ be the Riemannian metric on $P$ induced from the Riemannian metric $g$ on $M$ and the invariant metric $\dbla\,,\,\dbra$ on the Lie algebra $\mathfrak{g}$ of the structure group $G$,
\begin{align*}
g_P(X^h,Y^h) &=g(X,Y),\\
g_P(X^y,A^{\ast}) &=0,\\
g_P(A^{\ast},B^{\ast}) &=\dbla A,B\dbra,
\end{align*}
where $X,Y$ are vectors on $M$ and $A^{\ast},B^{\ast}$ fundamental vertical vector fields induced by elements $A,B\in\mathfrak{g}$. Then the projection $\pi:P\to M$ is a Riemannian submersion,hence the Fubini theorem (see for example \cite{C})
\begin{equation}\label{Fubini}
\int_P f(w)\,dw=\int_M\left( \int_{P_x} f(w)\,dw \right)dx
\end{equation}
holds. 

Assume now that $V$ is a subbundle of a tangent bundle $TM$ and the metric $g_V$ is just a restriction of the Riemannian metric $g$. 

Let $Y\in\Gamma(E)$. The divergence ${\rm div}_EY$ of a section $Y$ is defined as follows
\begin{equation*}
{\rm div}_EY=\sum_a g(\nabla^E_{E_a}Y,\pi_{\ast}E_a),
\end{equation*}
where $(E_a)$ is a local orthonormal basis of $P$ which projects on $V$. Notice that the divergence ${\rm div}_EY$ can be written in the following way
\begin{equation}\label{appx-divfor}
{\rm div}_EY=\sum_ig(\nabla^E_{f^h_i}Y,f_i\circ\pi),
\end{equation}
where $(f_i)$ is a local orthonormal basis of $V$. By \eqref{appx-divfor} it follows that
\begin{equation}\label{appx-divfunc}
{\rm div}_E(\varphi Y)=\varphi{\rm div}_EY+Y^h\varphi,
\end{equation}
where $Y\in\Gamma(E)$ and $\varphi$ is a smooth function on $P$.

\begin{proposition}\label{appx-pr5}
The divergence ${\rm div}_EY$ of a section $Y\in\Gamma(E)$ and the divergence ${\rm div}_V\widehat{Y}$ of a vector field $\widehat{Y}\in\Gamma(V)$ are related as follows
\begin{equation*}
\int_P {\rm div}_EY\,dw=\int_M {\rm div}_V\widehat{Y}\,dx.
\end{equation*}
In particular, if $M$ is closed and $V=TM$, then $\int_P{\rm div}_EY\,dw=0$.
\end{proposition}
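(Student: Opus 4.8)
The plan is to reduce the identity to Proposition \ref{appx-pr4} by means of the Fubini theorem \eqref{Fubini}. First I would apply \eqref{Fubini} to the smooth function ${\rm div}_E Y$ on $P$, obtaining
\begin{equation*}
\int_P {\rm div}_E Y\, dw = \int_M \left( \int_{P_x} ({\rm div}_E Y)(w)\, dw \right) dx = \int_M \widehat{{\rm div}_E Y}(x)\, dx,
\end{equation*}
so that it suffices to establish the pointwise equality $\widehat{{\rm div}_E Y} = {\rm div}_V \widehat{Y}$ on $M$.

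To prove this, I would fix $x\in M$, choose a local orthonormal frame $(f_1,\dots,f_p)$ of $V$ near $x$, and note that $(f_i\circ\pi)$ is then a local orthonormal frame of $E$ which is constant along each fiber of $P$. By \eqref{appx-divfor} one has ${\rm div}_E Y = \sum_i g(\nabla^E_{f_i^h} Y, f_i\circ\pi)$. Integrating over $P_x$, and using that $\widehat{\,\cdot\,}$ is defined componentwise with respect to a point-wise basis of $V$ together with the fact that each $f_i\circ\pi$ restricts along the fiber to the constant vector $f_i(x)\in V_x$, the fiber integral commutes with the (finite) sum and with pairing against $f_i(x)$, giving
\begin{equation*}
\widehat{{\rm div}_E Y}(x) = \sum_i g\!\left( \int_{P_x} \big(\nabla^E_{f_i^h} Y\big)(w)\, dw,\ f_i(x) \right) = \sum_i g\!\left( \widehat{\nabla^E_{f_i^h} Y}(x),\ f_i(x) \right).
\end{equation*}
Now Proposition \ref{appx-pr4}, i.e. \eqref{appx-int2}, yields $\widehat{\nabla^E_{f_i^h} Y} = \nabla^V_{f_i}\widehat{Y}$, so the right-hand side equals $\sum_i g(\nabla^V_{f_i}\widehat{Y}, f_i) = {\rm div}_V\widehat{Y}$ at $x$. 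Combining this with the first displayed equation proves the integral identity.

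For the final assertion, if $V = TM$ then ${\rm div}_V\widehat{Y}$ is the ordinary divergence of $\widehat{Y}\in\Gamma(TM)$, and when $M$ is closed the classical divergence theorem gives $\int_M {\rm div}\,\widehat{Y}\, dx = 0$, whence $\int_P {\rm div}_E Y\, dw = 0$. I expect the only delicate point to be the interchange of fiber integration with the inner product in the second paragraph, which rests precisely on the observation that the pulled-back frame $(f_i\circ\pi)$ does not vary over the fiber $P_x$; everything else is a direct application of \eqref{appx-divfor}, \eqref{Fubini}, Proposition \ref{appx-pr4}, and Stokes' theorem.
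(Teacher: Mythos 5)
Your proposal is correct and follows essentially the same route as the paper: the pointwise identity $\widehat{{\rm div}_EY}=\sum_i g(\widehat{\nabla^E_{f_i^h}Y},f_i)=\sum_i g(\nabla^V_{f_i}\widehat{Y},f_i)={\rm div}_V\widehat{Y}$ via Proposition \ref{appx-pr4}, combined with the Fubini theorem \eqref{Fubini} and the divergence theorem for the closed case. Your explicit justification of interchanging the fiber integral with the pairing against the frame $(f_i\circ\pi)$, which is constant along fibers, is a detail the paper leaves implicit but uses in exactly the same way.
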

\begin{proof}
By Proposition \ref{appx-pr4} we have
\begin{equation*}
\widehat{{\rm div}_EY}=\sum_ig(\widehat{\nabla^E_{f_i^h}Y},f_i)=\sum_ig(\nabla^V_{f_i}\widehat{Y},f_i)={\rm div}_V\widehat{Y}.
\end{equation*}
Thus by Fubini theorem \eqref{Fubini}
\begin{equation*}
\int_P{\rm div}_EY\,dw=\int_M\widehat{{\rm div}_EY}\,dx=\int_M {\rm div}_V\widehat{Y}\,dx.
\qedhere
\end{equation*}
\end{proof}

Let ${\rm End}(E)$ denote the bundle of endomorphisms of $E$, i.e., the fiber ${\rm End}(E)_x$, $x\in M$, is ${\rm End}(V_x)$. The connection $\nabla^E$ induces the connection in ${\rm End}(E)$, namely
\begin{equation*}
(\nabla^E_ZS)W=\nabla^E_Z(SW)-S(\nabla^E_ZW),\quad S\in\Gamma({\rm End}(E)), Z\in\Gamma(P),W\in\Gamma(E).
\end{equation*}
Then, the divergence ${\rm div}_ES$ of $S\in\Gamma({\rm End}(E))$ is defined as follows
\begin{equation*}
{\rm div}_ES=\sum_{E_a}(\nabla^E_{E_a}S)(\pi_{\ast}E_a).
\end{equation*}  
where $(E_a)$ is a local orthonormal basis of $P$ which projects on $V$. Notice that ${\rm div}_ES$ can be written in the form
\begin{equation*}
{\rm div}_ES=\sum_i(\nabla^E_{f_i^h}S)(f_i\circ\pi),
\end{equation*}
where $(f_i)$ is a local orthonormal basis of $V$.
\begin{proposition}\label{appx-pr6}
Let $s\in\Gamma(P)$ be a parallel section at a point $x\in M$, let $S\in\Gamma({\rm End}(E))$ and $X\in T_xM$. Then
\begin{equation*}
\left(\nabla^E_{X^h}S\right)(s(x))=\left(\nabla^V_X(S\circ s)\right)(x)
\end{equation*}
and
\begin{equation*}
\left({\rm div}_ES\right)(s(x))=\left({\rm div}_V(S\circ s)\right)(x).
\end{equation*}
\end{proposition}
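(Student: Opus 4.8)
The plan is to reduce everything to the two earlier pull-back formulas \eqref{appx-pr1} and \eqref{appx-pr2}, exactly as in the proof of Proposition \ref{appx-pr4}, together with the definition of the induced connection on $\End(E)$. For the first identity, I would fix $x$ and a section $s\in\Gamma(P)$ parallel at $x$, choose a local basis $E_1,\dots,E_m$ of $V$ with $\nabla^V E_a = 0$ at $x$, and write $S$ in components: $S E_a = \sum_b s_{ba}\,(E_b\circ\pi)$ for smooth functions $s_{ba}:P\to\mathbb R$. Then $(\nabla^E_{X^h}S)(s(x))$ is computed by expanding $\nabla^E_{X^h}(S(E_a\circ\pi))$ via the product rule for the induced connection on $\End(E)$; since $\nabla^E_{X^h}(E_b\circ\pi)$ vanishes at $s(x)$ by \eqref{appx-pr2} (as $\nabla^V_X E_b = 0$ at $x$) and likewise $S(\nabla^E_{X^h}(E_a\circ\pi))=0$ there, only the terms $X^h(s_{ba})\big|_{s(x)}\,E_b$ survive. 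By \eqref{appx-pr1}, $X^h(s_{ba})\big|_{s(x)} = X(s_{ba}\circ s)$, and the right-hand side is precisely $(\nabla^V_X(S\circ s))(x)$ computed in the same trivialization. This gives the pointwise identity for the covariant derivative.

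For the second identity, I would apply the first one fiberwise. Choose a local orthonormal basis $(f_i)$ of $V$ near $x$; then $({\rm div}_E S)(s(x)) = \sum_i \big((\nabla^E_{f_i^h}S)(f_i\circ\pi)\big)(s(x))$. By the first part applied with $X = f_i$ (which requires $s$ parallel at $x$, already assumed), each summand equals $\big((\nabla^V_{f_i}(S\circ s))(f_i)\big)(x)$, and summing over $i$ yields $({\rm div}_V(S\circ s))(x)$ by the definition of ${\rm div}_V$. One small point to be careful about is that \eqref{appx-pr2} as stated is about sections of $E$ of the form $Y\circ\pi$ (equivalently, the identity $(\nabla^E_{X^h}Y)\circ s = \nabla^V_X(Y\circ s)$ holds for all $Y\in\Gamma(E)$ when $s$ is parallel at $x$); I would invoke it in that full generality, which is what the excerpt already establishes.

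The only mild obstacle I anticipate is bookkeeping: making sure that the product rule for the connection induced on $\End(E)$ is applied correctly and that one does not implicitly differentiate $S$ in a direction other than $X^h$. This is purely formal once the trivialization is fixed, so I would keep the write-up short, essentially mirroring the proof of Proposition \ref{appx-pr4} with $S\in\Gamma(\End(E))$ in place of $Y\in\Gamma(E)$ and the $\End$-connection product rule replacing the linearity used there.
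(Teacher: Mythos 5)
Your argument is correct and is essentially the paper's own proof: the paper simply notes that the statement follows immediately from \eqref{appx-pr2} (together with the definition of the induced connection on ${\rm End}(E)$ and of ${\rm div}_E$), and your component computation in a frame parallel at $x$, mirroring the proof of Proposition \ref{appx-pr4}, just fills in those routine details. No gaps.
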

\begin{proof}
Follows immediately by \eqref{appx-pr2}.
\end{proof}

The following formulas hold
\begin{align}
{\rm div}_E(\varphi S) &=\varphi{\rm div}_ES+S(\pi_{\ast}{\rm grad}_{V^h}\varphi),\label{divE1}\\
{\rm div}_E(TS) &=\sum_i(\nabla^E_{f_i^h}T)(Sf_i)+T({\rm div}_ES),\label{divE2}
\end{align}
for $S,T\in\Gamma({\rm End}(E))$, $\varphi:P\to\mathbb{R}$, where $(f_i)$ is a local orthonormal basis of $V$ and ${\rm grad}_{V^h}\varphi$ denotes the $V^h$--component of the gradient of a function $\varphi$.

\begin{Ackn}
The authors would like to thank Pawe{\l} Walczak and Szymon M. Walczak for helpful conversations and support.
\end{Ackn}

\end{document}